\newtheorem{theorem}{Theorem}[section]
\newtheorem{proposition}[theorem]{Proposition}
\newtheorem{claim}[theorem]{Claim}
\newtheorem{lemma}[theorem]{Lemma}
\newcommand{\en}{\mathbb N}
\newcommand{\er}{\mathbb R}
\newcommand{\norm}{\|}
\DeclareMathOperator{\glued}{glued}
\DeclareMathOperator{\merge}{merge}
\DeclareMathOperator{\conv}{conv}
\DeclareMathOperator{\interior}{int}
\DeclareMathOperator{\dist}{dist}
\DeclareMathOperator{\STR}{STR}
\DeclareMathOperator{\CON}{CON}
\DeclareMathOperator{\SEG}{SEG}
\DeclareMathOperator{\susp}{susp}
\newcommand{\set}[1]{\left\{ #1\right\}}
\newcommand{\setcond}[2]{\set{ #1  \left| \ #2 \right.}}
\newcommand{\scf}[1]{\mathsf{#1}}
\newcommand{\lambdamergeone}{\lambda_{\merge, 1}}
\newcommand{\lambdamergetwo}{\lambda_{\merge, 2}}
\newcommand{\iotamerge}{\iota_{\merge}}
\newcommand{\sigmab}{\sigma_B}
\newcommand{\sigmag}{\sigma_G}
\renewcommand{\AA}{\scf A}
\newcommand{\BB}{\scf B}
\newcommand{\CC}{\scf C}
\newcommand{\CCg}{\scf C_{\glued}}
\newcommand{\DD}{\scf D}
\newcommand{\FF}{\scf F}
\newcommand{\GG}{\scf G}
\newcommand{\Ff}{\mathcal F}
\newcommand{\HH}{\scf H}
\newcommand{\II}{\scf I}
\newcommand{\JJ}{\scf J}
\newcommand{\KK}{\scf K}
\newcommand{\LL}{\scf L}
\newcommand{\LLp}{\overline{\scf L}}
\newcommand{\MM}{\scf M}
\newcommand{\NN}{\scf N}
\newcommand{\OO}{\scf O}
\newcommand{\RR}{\scf R}
\renewcommand{\SS}{\scf S}
\newcommand{\TT}{\scf T}
\newcommand{\VV}{\scf V}
\newcommand{\TWO}{\scf 2}
\renewcommand{\aa}{{\mathbf a}}
\newcommand{\bb}{{\mathbf b}}
\newcommand{\ee}{{\mathbf e}}
\renewcommand{\ss}{{\mathbf s}}
\newcommand{\uu}{{\mathbf u}}
\newcommand{\vv}{{\mathbf v}}
\newcommand{\ww}{{\mathbf w}}
\newcommand{\xx}{{\mathbf x}}
\newcommand{\collapseto}{\twoheadrightarrow}
\newcommand{\St}{\mathop{\hbox{St}}}
\newcommand{\heading}[1]{\medskip\par\noindent{\bf #1}}
\newcommand{\ProofEndBox}{{\ifhmode\unskip\nobreak\hfil\penalty50 \else
          \leavevmode\fi\quad\vadjust{}\nobreak\hfill\qedsymbol
                      \finalhyphendemerits=0 \par}}%
\newcommand{\proofend}{\ProofEndBox\smallskip}
\title{$d$-collapsibility is NP-complete for $d \geq 4$}
\author{
Martin Tancer\thanks{
Department of Applied Mathematics and Institute for Theoretical Computer Science (supported by project 1M0545
of The Ministry of Education of the Czech Republic), Faculty of Mathematics
and Physics, Charles University, Malostransk\'e n\'am.~25, 118~00 Prague,
Czech Republic. Partially supported by project GAUK 49209.
E-mail: {\tt tancer@kam.mff.cuni.cz}
}}
\begin{document}

\maketitle
\begin{abstract}
A simplicial complex is \emph{$d$-collapsible} if it can be reduced to an empty
complex by repeatedly removing (collapsing) a face of dimension at most $d-1$
that is contained in a unique maximal face. We prove that the algorithmic
question whether a given simplicial complex is $d$-collapsible is NP-complete
for $d \geq 4$ and polynomial time solvable for $d \leq 2$.

As an intermediate step, we prove that $d$-collapsibility can be recognized by
the greedy algorithm for $d \leq 2$, but the greedy algorithm does not work for
$d \geq 3$.

A simplicial complex is \emph{$d$-representable} if it is the nerve of a
collection of convex sets in $\er^d$. The main motivation for studying
$d$-collapsible complexes is that every $d$-representable complex is
$d$-collapsible. We also observe that known results imply that 
$d$-representability is NP-hard to decide for $d \geq
2$.
\end{abstract}

\section{Introduction}

Our task is to determine the computational complexity of recognition of
\emph{$d$-collapsible} simplicial complexes. These complexes were introduced
by Wegner~\cite{wegner75} and studying them is motivated by
Helly-type theorems, which we will discuss later. All the simplicial
complexes\footnote{We assume that the reader is familiar with simplicial
complexes; introductory chapters of books
like~\cite{matousek03,hatcher01,munkres84} should provide a sufficient
background. Unless stated otherwise, we work with abstract simplicial
complexes, i.e., set systems $\KK$ such that if $A \in \KK$ and $B \subseteq A$
then $B \in \KK$.} throughout the article are assumed to be finite.

\subsection{Main results}

\heading{\boldmath $d$-collapsible complexes.}
Informally, a simplicial complex is $d$-collapsible if it can be vanished by
removing faces of dimension at most $d-1$ which are contained in unique maximal
faces. A more detailed motivation for this definition is explained after
introducing $d$-representable complexes. Next we introduce some notation and state a precise definition.

A face $\sigma$ of a simplicial complex $\KK$ is \emph{collapsible} if there is
a unique maximal face of $\KK$ containing $\sigma$ (by ``maximal face'' we
always mean ``inclusionwise-maximal face''). Unless stated otherwise,
we denote this maximal face by 
$\tau(\sigma)$. 
(We allow $\tau(\sigma)
 = \sigma$.)
Moreover, if $\dim \sigma \leq d-1$, then $\sigma$ is \emph{$d$-collapsible}.
By $[\sigma, \tau(\sigma)]$ we denote the set
$$
\setcond{\eta \in \KK}{\sigma \subseteq \eta \subseteq \tau(\sigma)}
$$
of faces of $\KK$ that contain $\sigma$.
\nomenclature{$\tau(\sigma)$}{the unique maximal face containing $\sigma$
(assuming that such a face exists)\nomrefpage}
\nomenclature{$[\sigma,\tau(\sigma)]$}{the set of faces $\eta$ such that
$\sigma \subseteq \eta \subseteq \tau(\sigma)$ (assuming that $\tau(\sigma)$
exists)\nomrefpage}

We assume that $\sigma$ is $d$-collapsible and we say that the complex $\KK' =
\KK \setminus [\sigma, \tau(\sigma)]$ arises from $\KK$ by an \emph{elementary
$d$-collapse}. In symbols, 
$$
\KK \rightarrow \KK'.
$$ 
If we want to stress $\sigma$ we write
$$
\KK' = \KK_{\sigma}.
$$
\nomenclature{$\KK \rightarrow \KK'$}{a complex $\KK'$ arises from $\KK$ by an
elementary $d$-collapse\nomrefpage}
\nomenclature{$\KK_\sigma$}{a complex obtained by collapsing a $d$-collapsible
faces $\sigma$ in complex $\KK$\nomrefpage}

A complex $\KK$ \emph{$d$-collapses} to a complex $\LL$, in symbols $\KK
\collapseto \LL$, if there is a sequence of elementary $d$-collapses
$$
\KK \rightarrow \KK_2 \rightarrow \KK_3 \rightarrow \cdots \rightarrow
\LL.
$$
This sequence is called a \emph{$d$-collapsing} (of $\KK$ to $\LL$).
Finally, a complex $\KK$ is \emph{$d$-collapsible} if $\KK \collapseto \emptyset$.
An example of 2-collapsible complex is in Figure~\ref{Fig2coll}.
\nomenclature{$\KK \collapseto \LL$}{a complex $\KK$ $d$-collapses to a complex
$\LL$\nomrefpage}

\heading{\boldmath The computational complexity of $d$-collapsibility.}
How hard is it to decide whether a given simplicial complex is
$d$-collapsible? We consider the computational complexity of this question (the size of an input
is the number of faces of the complex in the question), regarding $d$ as a fixed integer; we refer to it as
$d$-COLLAPSIBILITY.

According to Lekkerkerker and Boland~\cite{lekkerkerker62} (see also~\cite{wegner75}), 
$1$-collapsible complexes are
exactly clique complexes over \emph{chordal graphs}. (A graph is chordal if
it does not contain an induced cycle of size 4 or more.) 
1-COLLAPSIBILITY is therefore polynomial time solvable. (Polynomiality of
1-COLLAPSIBILITY
also follows from Theorem~\ref{ThmOrder}(i).)

The main result of this paper is the following:

\begin{theorem}
\label{ThmMain}
\begin{itemize}
\item[\emph{(i)}] $2$-COLLAPSIBILITY is polynomial time solvable.
\item[\emph{(ii)}] $d$-COLLAPSIBILITY is NP-complete for $d \geq 4$.
\end{itemize}
\end{theorem}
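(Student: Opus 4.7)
The plan for part~(i) is to reduce the problem to an exchange/greedy property. The paper's intermediate result (Theorem~\ref{ThmOrder}(i) in the excerpt) asserts that for $d \leq 2$ the greedy algorithm suffices; concretely, the key statement I would prove is the exchange lemma: if $\KK$ is $2$-collapsible and $\sigma$ is any $2$-collapsible face of $\KK$, then $\KK_\sigma$ is also $2$-collapsible. Given this lemma, the algorithm is immediate: while some $2$-collapsible face $\sigma$ exists, replace $\KK$ by $\KK_\sigma$; accept iff one reaches $\emptyset$. This runs in polynomial time because each elementary collapse strictly reduces the number of faces, and scanning the faces of dimension $\leq 1$ to test the unique-maximal-face condition is polynomial. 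The content of the proof is thus the exchange lemma itself, which I would establish by taking a witness $2$-collapsing $\KK \rightarrow \KK_2 \rightarrow \cdots \rightarrow \emptyset$ and bubbling the step that eliminates $\sigma$ to the front via local commutations of adjacent elementary collapses. The low dimension $\dim \sigma \leq 1$ is essential in keeping the rearranged collapses valid.

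For part~(ii), membership in NP is standard: any $d$-collapsing has length at most the number of faces of $\KK$, and each elementary $d$-collapse is polynomially verifiable. NP-hardness for $d \geq 4$ I would obtain by a polynomial reduction from $3$-SAT. The construction assigns to a formula $\varphi$ a simplicial complex $\KK(\varphi)$ built from three kinds of pieces: variable gadgets, whose $4$-collapsings naturally split into two families corresponding to the truth values $\mathsf{true}$ and $\mathsf{false}$; clause gadgets, which can be $4$-collapsed only after at least one of the three literal positions has been ``released'' by the corresponding variable gadget in the correct direction; and linking simplices (of dimension $\leq 3$) that glue variable occurrences to the appropriate clause gadgets.

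The soundness direction (if $\varphi$ is satisfiable, then $\KK(\varphi)$ is $4$-collapsible) is proved by exhibiting an explicit $4$-collapsing that mirrors a satisfying assignment: first perform inside each variable gadget the collapse sequence corresponding to its chosen truth value, then collapse each clause gadget using a satisfying literal, and finally mop up the remaining linking faces. The main obstacle is the completeness direction: every $4$-collapsing of $\KK(\varphi)$ must encode a consistent satisfying assignment. Here I would argue as follows. For each variable gadget, one shows that the very first elementary collapse inside it must pick one of two ``sides'', and that this choice irrevocably blocks the other side: after one side is collapsed, the remaining faces of the opposite side cease to be contained in a unique maximal face and can no longer participate in a collapse of dimension $\leq 3$. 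Similarly, a clause gadget cannot be fully collapsed until at least one of its three linked literal positions has been released in the consistent direction. Combining these two observations extracts a satisfying assignment from any successful $4$-collapsing. The requirement $d \geq 4$ enters precisely in the design of the blocking argument: the variable and literal gadgets need $3$-dimensional obstructions, and one further dimension is used to attach them to the clauses.
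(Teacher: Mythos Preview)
Your outline for part~(i) is essentially the paper's strategy: reduce to Theorem~\ref{ThmOrder}(i) and then run the greedy algorithm. However, your proposed proof of the exchange lemma---``bubble the step eliminating $\sigma$ to the front via local commutations''---is not what the paper does and is not obviously sufficient. Commutation of two elementary collapses works cleanly only when the two faces are \emph{independent} (have distinct $\tau$'s); this is the paper's Claim~\ref{Claorder}. The hard case is when they are dependent, and the paper handles this not by a commutation but by a minimal-counterexample argument culminating in Claim~\ref{Clagoodbad}: for $d=2$, a good face and a bad face sharing the same $\tau$ must be \emph{disjoint}. This is precisely the step that fails for $d\geq 3$ and is where the low dimension is genuinely used; your sketch does not isolate it.

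For part~(ii), NP membership is fine, and your high-level architecture (variable gadgets with two sides, clause gadgets unlocked by a satisfying literal) matches the paper's. But there are real gaps. First, ``linking simplices of dimension $\leq 3$'' will not do the job: the paper's main technical construction is the connecting gadget $\CC$ (and its glued form $\CCg$), a carefully built $d$-dimensional complex whose \emph{only} $d$-collapsible face is a prescribed one. This one-way-valve property is what forces collapses to proceed in the intended order; bare simplices do not provide it. Second, you are missing the global plumbing: the paper needs merge gadgets $\MM^i$ and a single \emph{tidy connection} $\TT$ so that the cleanup phase (your ``mop up'') can only begin after \emph{every} clause has been satisfied. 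Without this bottleneck, an arbitrary $d$-collapsing could clean up stray faces before all clauses are checked, and your completeness argument breaks. Third, your explanation of why $d\geq 4$ is needed is off. The actual reason (see the proof of Lemma~\ref{LemConActivated}(i)) is combinatorial: in the variable gadget the two bases $\beta^+,\beta^-$ are disjoint $(d-2)$-faces inside a $2d$-simplex, and one needs $|\beta^+|\geq 3$ so that a liberation face on the negative side meets $\beta^+$ in at most one vertex---this forces $d\geq 4$.
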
 

\begin{figure}
\begin{center}
\epsfbox{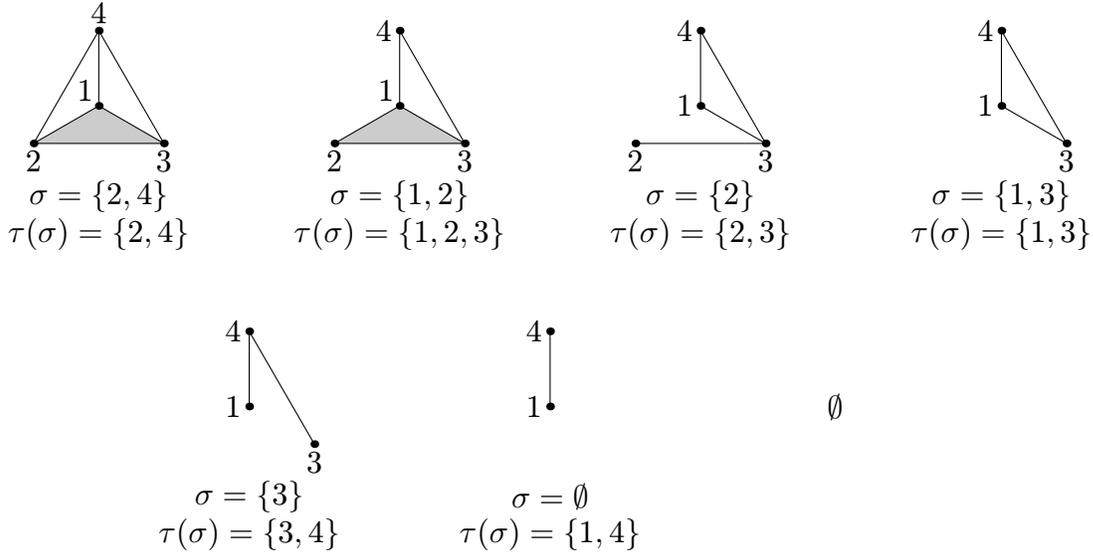}
\caption{An example of $2$-collapsing.}
\label{Fig2coll}
\end{center}
\end{figure}

Suppose that $d$ is fixed. A \emph{good face} is a $d$-collapsible face 
of $\KK$ such that $\KK_{\sigma}$ is $d$-collapsible; a \emph{bad face} is
a $d$-collapsible face of $\KK$ such that $\KK_{\sigma}$ is not $d$-collapsible. 

Now suppose that $\KK$ is a $d$-collapsible complex. It is not immediately clear
whether we can choose elementary $d$-collapses greedily in any order to
$d$-collapse $\KK$, or whether there is a ``bad sequence'' of $d$-collapses such that the resulting
complex is no longer $d$-collapsible. Therefore, we consider the following
question: For which $d$ there is a $d$-collapsible complex $\KK$ such that it
contains a bad face? The answer is:

\begin{theorem}
\label{ThmOrder}
\begin{itemize}
\item[\emph{(i)}] Let $d \leq 2$. Then every $d$-collapsible face of a
$d$-collapsible complex is good.
\item[\emph{(ii)}] Let $d \geq 3$. Then there exists a $d$-collapsible complex
containing a bad $d$-collapsible face.
\end{itemize} 
\end{theorem}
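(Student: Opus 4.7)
I would invoke the Lekkerkerker--Boland characterization cited in the introduction: a complex is $1$-collapsible iff it is the clique complex of a chordal graph, and a $1$-collapsible face corresponds to a simplicial vertex (one whose neighbourhood is a clique). Since deleting a simplicial vertex from a chordal graph preserves chordality (classical), $\KK_\sigma$ is the clique complex of a chordal graph, hence $1$-collapsible.

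\textbf{Part (i), $d = 2$.} I would argue by induction on $|\KK|$. Given a $2$-collapsing $\KK \to \KK_1 \to \cdots \to \emptyset$ via faces $\mu_0, \mu_1, \ldots$ and a $2$-collapsible face $\sigma$ of $\KK$, the aim is to produce a $2$-collapsing of $\KK_\sigma$ by reorganising the given sequence. The main tool is a swap lemma for pairs of $2$-collapsible faces: when the intervals $[\sigma,\tau(\sigma)]$ and $[\mu_0,\tau(\mu_0)]$ are disjoint, the two collapses commute and one may perform $\sigma$ first. The remaining cases are constrained by $\dim \sigma \leq 1$, which bounds the combinatorial possibilities for how $\sigma$ and $\mu_0$ can interact inside their common ambient maximal face. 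I expect the main obstacle to be the case where $\sigma$ is a vertex and collapsing $\mu_0$ erases the unique maximal face $\tau(\sigma)$ before $\sigma$ is scheduled to be collapsed: here I would execute $\mu_0$ first to pass to a strictly smaller complex in which $\sigma$ is either already removed or remains $2$-collapsible (with a possibly different $\tau$-value), then invoke the induction hypothesis.

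\textbf{Part (ii), $d = 3$.} I would exhibit an explicit small $3$-dimensional complex $\KK$ together with a bad face $\sigma$ (most naturally a $2$-face). The design principle is to embed a ``stubborn'' substructure in which every face of dimension at most $2$ lies in at least two maximal $3$-simplices (so that, in isolation, the substructure has no $3$-collapsible face, in the spirit of a triangulated pseudo-$3$-manifold such as $\partial \Delta^4$), together with a small attached ``key'' of auxiliary tetrahedra providing the only initial $3$-collapsible faces. Choose $\sigma$ so that the interval $[\sigma,\tau(\sigma)]$ engulfs the entire key; after this elementary collapse, only the stubborn substructure survives, and $\KK_\sigma$ has no $3$-collapsible face. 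The main obstacle is ensuring that $\KK$ itself is $3$-collapsible via \emph{some} ordering other than starting with $\sigma$: the key has to be rich enough that at least one alternative first collapse initiates a cascade dismantling the stubborn substructure before it becomes locked.

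\textbf{Part (ii), $d \geq 4$.} The general case would follow by an analogous direct construction: one replaces the stubborn substructure in the $d=3$ example by an appropriate $d$-dimensional analogue (for example a piece of $\partial \Delta^{d+1}$ or another simplicial pseudo-$d$-manifold in which every face of dimension at most $d-1$ lies in at least two maximal $d$-simplices) and attaches a correspondingly $d$-dimensional key. The same design principle produces a bad $d$-collapsible face. The reason for doing this rather than bootstrapping from $d=3$ by a join with a $(d-3)$-simplex is that $d$-collapsibility is more permissive than $3$-collapsibility: after joining, a former $3$-dimensional obstruction $(\KK_0)_{\sigma_0}$ may become $d$-collapsible through the additional high-dimensional moves newly available, so preservation of the bad-face property is not automatic and the direct construction is the safest route.
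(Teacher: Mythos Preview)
Your $d=1$ argument via chordal graphs is fine; the paper uses the equivalence with $1$-Leray instead, but mentions that a direct argument also works.

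\textbf{Part (i), $d=2$.} Your swap lemma for independent $\sigma,\mu_0$ is correct and is the paper's Claim~\ref{Claorder}. The gap is the dependent case $\tau(\sigma)=\tau(\mu_0)=:\tau$ with $\sigma\cap\mu_0=\emptyset$: if $\sigma=\{a,b\}$ and $\mu_0=\{c,d\}$ are disjoint edges in $\tau$, then in $\KK_{\mu_0}$ the face $\sigma$ lies in the two distinct maximal faces $\tau\setminus\{c\}$ and $\tau\setminus\{d\}$, so $\sigma$ is no longer $2$-collapsible and ``do $\mu_0$ first, then induct'' breaks down. This is exactly where $d=2$ bites. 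The paper handles it by passing to a minimal counterexample, showing all $2$-collapsible faces share a common $\tau$, and then proving (Claim~\ref{Clagoodbad}) that a good face and a bad face must be disjoint; the point is that two edges sharing a vertex still admit the commutation $(\KK_\sigma)_{\sigma'}=(\KK_{\sigma'})_\sigma$, so only the disjoint case survives. The contradiction is then obtained by following the given collapsing until it first produces a maximal face outside $\tau$, a global argument that has no counterpart in your local swap scheme.

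\textbf{Part (ii).} Your design has a balance problem you do not resolve. If a single elementary collapse is to leave only the stubborn core (so that $\KK_\sigma$ has no $d$-collapsible face), then every face of the key not in the core must lie in $[\sigma,\tau(\sigma)]$; this forces the key to be essentially one $d$-simplex glued to the core along a facet. But then every alternative first collapse of a free $(d-1)$-face of that simplex leads, after cleanup, to the same stuck core, so $\KK$ itself is not $d$-collapsible. The paper's construction is inverted relative to yours: the core is the full simplex $\TWO^S$ on $2d$ vertices (easy to collapse once started), and the attached connecting gadget $\CC$ is the stubborn object, engineered to have a \emph{unique} $d$-collapsible face (built by identifying all but one facet of a triangulated $d$-crosspolytope). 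The bad face $\sigmab\subset\TWO^S$ does not touch $\CC$; collapsing it destroys the liberation faces that would otherwise free the unique collapsible face of $\CC$. Your caution about lifting a $d=3$ example by a join is well placed; the paper indeed gives a uniform construction for all $d\geq 3$ rather than bootstrapping.
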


Theorem~\ref{ThmMain}(i) is a straightforward consequence of
Theorem~\ref{ThmOrder}(i). Indeed, if we want to test whether a given complex
is 2-collapsible, it is sufficient to greedily collapse $d$-collapsible faces.
Theorem~\ref{ThmOrder}(i) implies that we finish with an empty complex if
and only if the original complex is 2-collapsible.

Our construction for Theorem~\ref{ThmOrder}(ii) is an
intermediate step to proving Theorem~\ref{ThmMain}(ii).

\subsection{Motivation and background}

\heading{\boldmath $d$-representable complexes.}
Helly's theorem~\cite{helly23} asserts that if $C_1, C_2, \dots, C_n$
are convex sets in $\er^d$, $n \geq d + 1$, and every $d + 1$ of them have a
common point, then $C_1 \cap C_2 \cap \dots \cap C_n \neq \emptyset$. This
theorem (and several other theorems in discrete geometry) deals with
\emph{intersection patterns} of convex sets in $\er^d$. It can be restated
using the notion of \emph{$d$-representable} complexes, which ``record'' the
intersection patterns.

The \emph{nerve} of a family $\mathcal{S} = \set{S_1, S_2, \dots, S_n}$ of sets
is the simplicial complex with vertex set $[n] = \set{1, 2, \dots, n}$ and with
the set
$\sigma \subseteq [n]$ forming a face if $\bigcap_{i \in \sigma} S_i \neq
\emptyset$. A simplicial complex is $d$-representable if it is isomorphic to
the nerve of a family of convex sets in $\er^d$.

\nomenclature{$[n]$}{the set $\set{1,2,\dots,n}$\nomrefpage}

In this language, Helly theorem states that if a $d$-representable complex
(with the vertex set $V$) contains all faces of dimension at most $d$, then
it is already a \emph{full simplex} $\TWO^V$. Beside Helly's theorem we also
mention several other known results that can be formulated using
$d$-representability.
They include the fractional Helly theorem of Katchalski and
Liu~\cite{katchalski-liu79}, the
colorful Helly theorem of Lov\' asz (\cite{lovasz74}; see
also~\cite{barany82}), the $(p,
q)$-theorem of Alon and Kleitman~\cite{alon-kleitman92}, and the Helly type
result of
Amenta~\cite{amenta96}.
\nomenclature{$\TWO^V$}{the full simplex on a vertex set $V$\nomrefpage}

\heading{\boldmath $d$-Leray complexes.}
Another related notion is a \emph{$d$-Leray} simplicial complex, where $\KK$ is
$d$-Leray if every \emph{induced subcomplex} of $\KK$ (i.e. a subcomplex of
the form $\KK[X] = \setcond{\sigma \cap X}{\sigma \in \KK}$ for some subset $X$
of the vertex set of $\KK$) has zero homology (over $\mathbb Q$) in dimensions
$d$ and larger. We will mention $d$-Leray complexes only briefly, thus the
article should be accessible also for the reader not familiar with homology.

\heading{Relations among the preceding notions.} 
Wegner~\cite{wegner75} proved
that $d$-representable complexes are $d$-collapsible and also that 
$d$-collapsible complexes are $d$-Leray. 

Regarding the first inclusion, suppose that we are given convex sets in $\er^d$ 
representing a $d$-representable complex. Sliding a generic hyperplane (say
from infinity to minus infinity) and cutting off the pieces on the positive
side of the hyperplane yields a $d$-collapsing of the complex. (Several
properties have to be checked, of course.) This is the main motivation for the
definition of $d$-collapsibility.

The second inclusion is more-less trivial (for a reader familiar with homology)
since $d$-collapsing does not affect homology of dimension $d$ and larger.

Many results on $d$-representable complexes can be generalized in terms of
$d$-collapsible complexes, the results mentioned here even for $d$-Leray
complexes.

For example, a topological generalization of Helly's theorem follows from
Helly's own work~\cite{helly30}, a generalization of the fractional Helly
theorem and
$(p,q)$-theorem was done in~\cite{alon-kalai-matousek-meshulam02}, and a
generalization of the colorful Helly
theorem and Amenta's theorem was proved by Kalai and
Meshulam~\cite{kalai-meshulam05},~\cite{kalai-meshulam08}.

Dimensional gaps between collapsibility and
representability were studied by Matou\v{s}ek and the
author~\cite{matousek-tancer09, tancer10}; an interesting variation on
$d$-collapsibility was
used by Matou\v{s}ek in order to show that it is not easy to remove degeneracy
in LP-type problems~\cite{matousek09}.


\heading{Related complexity results.}
Similarly as $d$-COLLAPSIBILITY, we can also consider the computational
complexity of $d$-REPRESENTABILITY and $d$-LE\-RAY COMPLEX.

By a modification of a result of Kratochv\'\i l and Matou\v{s}ek on string
graphs (\cite{kratochvil-matousek89}; see also \cite{kratochvil91}), one has that
$2$-REPRESENTABILITY is NP-hard. Moreover, this result also implies that
$d$-REPRESENTABILITY is NP-hard for $d \geq 2$. Details are given in
Section~\ref{AppRepresentability}. It is not known to the author whether
$d$-REPRESENTABILITY belongs to NP.

Finally, $d$-LERAY COMPLEX is polynomial time solvable, since an equivalent characterization
of $d$-Leray complexes is that it is sufficient to test whether the homology
(of dimension greater or equal to $d$) of \emph{links}\footnote{A link of a
face $\sigma$ in a complex $\KK$ is the complex $\setcond{\eta \in \KK}{\eta
\cup \sigma \in \KK, \eta \cap \sigma = \emptyset}$.} of faces of the
complex in the question vanishes. These tests can be performed in a polynomial
time; see~\cite{munkres84} (note that the $k$-th homology of a complex of dimension less than $k$ is
always zero; note also that the homology is over $\mathbb Q$, which simplifies
the situation---computing homology for this case is indeed only a linear algebra).

Among the above mentioned notions, $d$-REPRESENTABILITY is of the biggest
interest since it straightly affects intersection patterns of convex sets.
However, NP-hardness of this problem raises the question, whether it can be
replaced with $d$-COLLAPSIBILITY or $d$-LERAY COMPLEX. As we have already
mentioned, $d$-LERAY COMPLEX is polynomial time solvable thus one could be
satisfied with replacing $d$-REPRESENTABILITY with $d$-LERAY COMPLEX. However,
$d$-COLLAPSIBILITY is closer to $d$-REPRESENTABILITY. 

One of the important differences regards Wegner's conjecture. An open set in
$\er^d$ homeomorphic to an open ball is a \emph{$d$-cell}. A \emph{good
cover} in $\er^d$ is a collection of $d$-cells such that an intersection of any
subcollection is again a $d$-cell or empty. Wegner conjectured that nerve of a finite good cover in $\er^d$ is $d$-collapsible. 
A recent result of the author disproves this conjecture~\cite{tancer10prep}. However,
the nerve of a finite good cover in $\er^d$ is always $d$-Leray due to the
nerve theorem; see, e.g,~\cite{bjorner95,borsuk48}.
We get that the notion of $d$-Leray complexes cannot distinguish the nerves of
collections of convex sets and good covers; however, $d$-representability is
stronger in this respect.
That is also why we want to clarify the complexity status of $d$-COLLAPSIBILITY. 


\heading{A particular example of computational interest.} A collection of
convex sets in $\er^d$ has a \emph{$(p,q)$-property} with $p \geq q \geq d+1$
if among every $p$ sets of the collection there is a subcollection of $q$ sets 
with a nonempty intersection. The above mentioned $(p,q)$-theorem of Alon and
Kleitman states that for all integers $p, q, d$ with $p \geq q \geq d+1$ there
is an integer $c$ such that for every finite collection of convex sets in
$\er^d$ with $(p,q)$-property there are $c$ points in $\er^d$ such that every
convex set of the collection contains at least one of the selected points. Let
$c' = c'(p,q,d)$ be the minimum possible value of $c$ for which the conclusion
of the $(p,q)$-theorem holds. A significant effort was devoted to estimating
$c'$. The first unsolved case regards
estimating $c'(4,3,2)$. The best bounds\footnote{Known to the author.} are due
to Kleitman, Gy\'arf\'as and T\'oth~\cite{kleitman-gyarfas-toth01}: $3 \leq c'(4,3,2) \leq 13$. It
seems that the actual value of $c'(4,3,2)$ is rather closer to the lower bound
in this case, and thus it would be interesting to improve the lower bound even
by one.\footnote{Kleitman, Gy\'arf\'as and T\'oth offer \$30 for such an improvement.}

 Here $2$-collapsibility could come into the play. When looking for a small
example one could try to generate all $2$-collapsible complexes and check the
other properties.

\heading{Collapsibility in Whitehead's sense.}
Beside $d$-collapsibility, collapsibility in Whitehead's sense is much
better known (called simply \emph{collapsibility}). In the case of collapsibility,
we allow only to collapse a face $\sigma$ that is a proper subface of the
unique maximal face containing $\sigma$. On the other hand, there is no
restriction on dimension of $\sigma$.

Let us mention that one of the important differences between $d$-collapsibility
and collapsibility is that every finite simplicial complex is $d$-collapsible
for $d$ large enough; on the other hand not an every finite simplicial complex is
collapsible.

Malgouyres and Franc\'es~\cite{malgouyres-frances08} proved that it is NP-complete to decide, whether a
given 3-dimensional complex collapses to a given 1-dimensional complex.
However, their construction does not apply to $d$-collapsibility. A key
ingredient of their construction is that collapsibility distinguishes a Bing's
house with thin walls and a Bing's house with a thick wall. However, they are
not distinguishable from the point of view of $d$-collapsibility. They are both
3-collapsible, but none of them is 2-collapsible.

\heading{Technical issues.}
Throughout this paper we will use several technical lemmas about
$d$-collapsibility. Since I think that
the main ideas of the paper can be followed even without these lemmas I decided
to put them separately to Section~\ref{AppTechnical}. The reader is encouraged to skip
them for the first reading and look at them later for full details.

The paper contains many symbols.
For the reader's convenience we add a list of often used symbols.
It is situated at the end of the
paper---just above the bibliography. 

\section{2-collapsibility}

Here we prove Theorem~\ref{ThmOrder}(i).

The case $d = 1$ follows from the fact that $d$-collapsible complexes coincide
with $d$-Leray ones (\cite{lekkerkerker62,wegner75}). Indeed, let $\KK$ be a 1-collapsible complex and
let $\sigma$ be its 1-collapsible face. We have that $\KK$ is 1-Leray, which
implies that $\KK_{\sigma}$ is 1-Leray (1-collapsing does not affect homology
of dimensions 1 and more). This implies that $\KK_{\sigma}$ is 1-collapsible,
i.e., $\sigma$ is good. In fact, the case $d = 1$ can be also solved by a similar
(simpler) discussion as the following case $d=2$.  
 

It remains to consider the case $d=2$.
Suppose that $\KK$ is a $2$-collapsible complex which, for contradiction,
contains a bad $2$-collapsible face $\sigmab \in \KK$. On the other hand, it
also contains a good face $\sigmag$ since it is $2$-collapsible. Moreover,
we can, without loss of generality, suppose that $\KK$ is the smallest complex (according to the number of faces) with these properties.


\begin{claim}
\label{Clagood}
Let $\sigma$ be a good face of \emph{$\KK$} and let $\sigma'$ be a $2$-collapsible face of \emph{$\KK_{\sigma}$}. Then $\sigma'$ is a good face of \emph{$\KK_{\sigma}$}.
\end{claim}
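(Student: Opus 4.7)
The plan is to exploit the minimality of $\KK$ (the smallest $2$-collapsible complex containing both a good face $\sigmag$ and a bad $2$-collapsible face $\sigmab$) by showing that any violation of Claim~\ref{Clagood} would yield a strictly smaller counterexample.

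Assume for contradiction that $\sigma'$ is bad in $\KK_{\sigma}$, i.e., $(\KK_{\sigma})_{\sigma'}$ is not $2$-collapsible. First I observe that $\KK_{\sigma}$ is itself $2$-collapsible: this is exactly the statement that $\sigma$ is a good face of $\KK$. Moreover $\KK_{\sigma}$ is a proper subcomplex of $\KK$, since an elementary $2$-collapse always removes at least the maximal face $\tau(\sigma)$; hence $\KK_{\sigma}$ has strictly fewer faces than $\KK$.

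Next I need to exhibit a good $2$-collapsible face of $\KK_{\sigma}$ so that it qualifies as a candidate counterexample in the minimality argument. For this, note that $\KK_{\sigma}$ is nonempty, because $\sigma'$ is assumed to be one of its faces. Being a nonempty $2$-collapsible complex, $\KK_{\sigma}$ admits an elementary $2$-collapse starting off a full $2$-collapsing of it to $\emptyset$; the face performing this first collapse is $2$-collapsible in $\KK_{\sigma}$ and, since the rest of the $2$-collapsing continues from the resulting complex, it is good in $\KK_{\sigma}$.

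Therefore $\KK_{\sigma}$ is a $2$-collapsible complex with strictly fewer faces than $\KK$ that simultaneously contains a good $2$-collapsible face and a bad $2$-collapsible face $\sigma'$. This contradicts the assumed minimality of $\KK$, so $\sigma'$ must in fact be good in $\KK_{\sigma}$. The only mildly delicate point is verifying that $\KK_{\sigma}$ indeed admits some good face, but as noted this follows immediately from the definition of $2$-collapsibility applied to the nonempty complex $\KK_{\sigma}$; everything else is a direct invocation of the minimality hypothesis.
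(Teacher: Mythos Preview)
Your proof is correct and follows essentially the same argument as the paper: both derive a contradiction with the minimality of $\KK$ by observing that $\KK_{\sigma}$ would be a strictly smaller $2$-collapsible complex containing a bad $2$-collapsible face. The paper's version is terser and does not separately verify that $\KK_{\sigma}$ contains a good face, since this is automatic for any nonempty $2$-collapsible complex (and hence implicit in the minimality hypothesis), but your added detail does no harm.
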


\begin{proof}
The complex $\KK_{\sigma}$ is $2$-collapsible since $\sigma$ is a good face
of $\KK$. If $\sigma'$ were a bad face of $\KK_{\sigma}$, then $\KK_{\sigma}$
would be a smaller counterexample to Theorem~\ref{ThmOrder}(i) contradicting the choice of $\KK$.
\end{proof}

Recall that $\tau(\sigma)$ denotes the unique maximal superface of a
collapsible face $\sigma$. Two collapsible faces $\sigma$ and $\sigma'$ are
\emph{independent} if $\tau(\sigma) \neq \tau(\sigma')$; otherwise, they are
\emph{dependent}.
 The symbol $\St(\sigma, \KK)$ denotes the (open) \emph{star} of a face
$\sigma$ in $\KK$, which consists of all superfaces of $\sigma$ in $\KK$
(including $\sigma$). We remark that $\St(\sigma, \KK) = [\sigma,
\tau(\sigma)]$ in case that $\sigma$ is collapsible.
\nomenclature{$\St(\sigma, \KK)$}{an open star of a face $\sigma$ in
$\KK$\nomrefpage}

\begin{claim}
\label{Claorder}
Let \emph{$\sigma, \sigma' \in \KK$} be independent $2$-collapsible faces. Then $\sigma$
is a $2$-collapsible face of \emph{$\KK_{\sigma'}$}, $\sigma'$ is a $2$-collapsible
face of \emph{$\KK_{\sigma}$}, and \emph{$(\KK_{\sigma})_{\sigma'} =
(\KK_{\sigma'})_{\sigma}$}.
\end{claim}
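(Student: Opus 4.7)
The plan is to deduce all three assertions from the single structural observation that the intervals $[\sigma, \tau(\sigma)]$ and $[\sigma', \tau(\sigma')]$ are disjoint sets of faces of $\KK$.

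First I would argue that $\sigma' \not\subseteq \tau(\sigma)$. Indeed, if $\sigma'$ were contained in $\tau(\sigma)$, then $\tau(\sigma)$, being inclusionwise-maximal in $\KK$, would be a maximal face of $\KK$ containing $\sigma'$; by the uniqueness of such a face we would then have $\tau(\sigma) = \tau(\sigma')$, contradicting independence. Symmetrically, $\sigma \not\subseteq \tau(\sigma')$. Now any $\eta \in [\sigma, \tau(\sigma)]$ satisfies $\eta \subseteq \tau(\sigma)$ and so cannot contain $\sigma'$, hence does not lie in $[\sigma', \tau(\sigma')]$. This gives the claimed disjointness; in particular $\sigma, \tau(\sigma) \in \KK_{\sigma'}$ and $\sigma', \tau(\sigma') \in \KK_\sigma$.

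Next I would verify that $\tau(\sigma)$ remains the unique maximal face of $\KK_{\sigma'}$ containing $\sigma$. Maximality of $\tau(\sigma)$ in $\KK_{\sigma'}$ is automatic from its maximality in $\KK$ and the inclusion $\KK_{\sigma'} \subseteq \KK$. For uniqueness, any maximal face $\eta$ of $\KK_{\sigma'}$ containing $\sigma$ is also a face of $\KK$, and enlarging it inside $\KK$ produces a maximal face of $\KK$ containing $\sigma$, which must be $\tau(\sigma)$; since $\tau(\sigma)$ still lies in $\KK_{\sigma'}$, maximality of $\eta$ in $\KK_{\sigma'}$ forces $\eta = \tau(\sigma)$. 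Because $\dim \sigma \leq 1$, this shows that $\sigma$ is $2$-collapsible in $\KK_{\sigma'}$ with the same distinguished superface $\tau(\sigma)$. The symmetric assertion for $\sigma'$ in $\KK_\sigma$ follows identically.

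Finally, for the equality of the two resulting complexes, the disjointness of the intervals shows that the interval $[\sigma', \tau(\sigma')]$ computed in $\KK_\sigma$ coincides with the one computed in $\KK$, so
\[
(\KK_\sigma)_{\sigma'} \;=\; \KK_\sigma \setminus [\sigma', \tau(\sigma')] \;=\; \KK \setminus \bigl([\sigma, \tau(\sigma)] \cup [\sigma', \tau(\sigma')]\bigr),
\]
and the analogous computation produces the same set for $(\KK_{\sigma'})_\sigma$. The only delicate point in the whole argument is that the symbol $\tau$ must be interpreted in the correct ambient complex; all of the required stability of $\tau$ under the collapses is provided by the disjointness of the two intervals established at the outset.
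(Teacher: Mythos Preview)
Your proof is correct and follows essentially the same approach as the paper: both hinge on the observation that $\sigma \not\subseteq \tau(\sigma')$ (and symmetrically), which the paper phrases as $\St(\sigma,\KK) = \St(\sigma,\KK_{\sigma'})$ and you phrase as disjointness of the intervals $[\sigma,\tau(\sigma)]$ and $[\sigma',\tau(\sigma')]$. Your treatment is somewhat more explicit about why $\tau(\sigma)$ remains the unique maximal face in $\KK_{\sigma'}$, but the underlying argument is the same.
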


\begin{proof}
Since $\tau(\sigma) \neq \tau(\sigma')$, we have $\sigma \not\subseteq
\tau(\sigma')$. Thus, $\St (\sigma, \KK) = \St (\sigma, \KK_{\sigma'})$,
implying that $\tau(\sigma)$ is
also a unique maximal face containing $\sigma$ when considered in
$\KK_{\sigma'}$.  It means that $\sigma$ is a collapsible face of
$\KK_{\sigma'}$. Symmetrically, $\sigma'$ is a collapsible face of
$\KK_{\sigma}$. Finally,
$$ (\KK_{\sigma})_{\sigma'} = (\KK_{\sigma'})_{\sigma}
= \KK \setminus \setcond{\eta \in \KK}{\sigma \subseteq \eta \hbox{ or }
\sigma' \subseteq \eta}.$$
\end{proof}

\begin{claim}
\label{Cladependent}
Any two $2$-collapsible faces of $\KK$ are dependent.
\end{claim}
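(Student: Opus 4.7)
The plan is to argue by contradiction: suppose that $\KK$ contains two independent $2$-collapsible faces $\sigma$ and $\sigma'$, and split into cases according to whether each of them is good or bad.

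The key sub-observation I would establish first is that an independent pair consisting of one good face and one bad face of $\KK$ cannot exist. Indeed, let $\tilde\sigma$ be good and $\tilde\sigma'$ be bad, independent and both $2$-collapsible in $\KK$. Since $\tilde\sigma$ is good, $\KK_{\tilde\sigma}$ is $2$-collapsible; Claim~\ref{Claorder} promotes $\tilde\sigma'$ to a $2$-collapsible face of $\KK_{\tilde\sigma}$, and Claim~\ref{Clagood} then promotes it further to a good face of $\KK_{\tilde\sigma}$. Hence $(\KK_{\tilde\sigma})_{\tilde\sigma'}$ is $2$-collapsible. Using the equality in Claim~\ref{Claorder}, $(\KK_{\tilde\sigma})_{\tilde\sigma'} = (\KK_{\tilde\sigma'})_{\tilde\sigma}$, while $\tilde\sigma$ is $2$-collapsible in $\KK_{\tilde\sigma'}$. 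Prepending the collapse of $\tilde\sigma$ to a $2$-collapsing of $(\KK_{\tilde\sigma'})_{\tilde\sigma}$ to $\emptyset$ would certify that $\KK_{\tilde\sigma'}$ is $2$-collapsible, contradicting the badness of $\tilde\sigma'$.

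With this sub-observation, the claim follows by a short case analysis on $\sigma$ and $\sigma'$. If exactly one of them is good and the other bad, we are done directly. If both $\sigma$ and $\sigma'$ are bad, I would bring in the good face $\sigmag$ guaranteed to exist in $\KK$: the sub-observation forbids $\sigmag$ from being independent of either of them, so $\tau(\sigmag) = \tau(\sigma)$ and $\tau(\sigmag) = \tau(\sigma')$, yielding $\tau(\sigma) = \tau(\sigma')$ and contradicting the assumed independence. The case in which both $\sigma$ and $\sigma'$ are good is symmetric, comparing them against the bad face $\sigmab$.

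The main (and essentially only) non-routine step will be the sub-observation: it is exactly the place where the minimality of $\KK$ enters the argument through Claim~\ref{Clagood}, while independence is what allows the two collapses to be commuted via Claim~\ref{Claorder}. Once the sub-observation is in place, the rest of the proof is bookkeeping.
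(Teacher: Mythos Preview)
Your proposal is correct and follows essentially the same approach as the paper: first handle the mixed good/bad independent pair via Claims~\ref{Claorder} and~\ref{Clagood}, then reduce the both-good and both-bad cases to the mixed case by comparing against $\sigmab$ and $\sigmag$ respectively. Your write-up is in fact slightly more explicit than the paper's (you spell out the ``prepend the collapse'' step that the paper leaves implicit), but the argument is the same.
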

\begin{proof}
For contradiction, let $\sigma$, $\sigma'$ be two independent $2$-collapsible
faces
in $\KK$. First, suppose that one of them is good, say $\sigma$, and the
second one, i.e., $\sigma'$, is bad. The face $\sigma'$ is a
collapsible face of $\KK_{\sigma}$ by Claim~\ref{Claorder}. Thus,
$(\KK_{\sigma})_{\sigma'}$ is $2$-collapsible by Claim~\ref{Clagood}.
But $(\KK_{\sigma})_{\sigma'} = (\KK_{\sigma'})_{\sigma}$ by
Claim~\ref{Claorder}, which contradicts the assumption that $\sigma'$ is a bad face.

Now suppose that $\sigma$ and $\sigma'$ are good faces. Then at least one of
them is independent of $\sigmab$, which yields the contradiction as in the
previous case. Similarly, if both of $\sigma$ and $\sigma'$ are bad faces, then
at least one of them is independent of $\sigmag$.
\end{proof}
\nomenclature{$\sigmab$}{a bad face\nomrefpage}

Due to Claim~\ref{Cladependent} there exists a universal $\tau \in \KK$ such
that $\tau = \tau(\sigma)$ for every $2$-collapsible $\sigma \in \KK$. Let us
remark that $\KK \neq \TWO^{\tau}$ since $\sigmab$ is a bad face.

The following claim represents a key difference among $2$-collapsibility and
$d$-collapsibility for $d \geq 3$. It wouldn't be valid in case of
$d$-collapsibility.

\begin{claim}
\label{Clagoodbad}
Let $\sigma$ be a good face and let $\sigma'$ be a bad face. Then $\sigma \cap
\sigma' = \emptyset$.
\end{claim}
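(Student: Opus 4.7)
The plan is to argue by contradiction: suppose some vertex $v \in \sigma \cap \sigma'$. Claim~\ref{Cladependent} gives $\tau(\sigma) = \tau(\sigma') = \tau$, so both faces lie inside $\tau$, and being $2$-collapsible each has at most two vertices. Since $\sigma \neq \sigma'$ (they have opposite status), we fall into one of three subcases: (A) $\sigma = \{v\} \subsetneq \sigma' = \{v,u\}$; (B) $\sigma' = \{v\} \subsetneq \sigma = \{v,u\}$; or (C) $\sigma = \{u,v\}$, $\sigma' = \{v,w\}$ with $u,v,w$ distinct. In every case the goal is to show that $\KK_{\sigma'}$ is in fact $2$-collapsible, contradicting the badness of $\sigma'$.

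For Case B I would first check that $\sigma'$ remains a face of $\KK_\sigma$, that its unique maximal superface there is $\tau \setminus \{u\}$ (collapsing $\sigma$ removes exactly the subfaces of $\tau$ containing both $v$ and $u$), and hence that $\sigma'$ is $2$-collapsible in $\KK_\sigma$. Since $\sigma$ is good, $\KK_\sigma$ is $2$-collapsible, and Claim~\ref{Clagood} gives that $\sigma'$ is good in $\KK_\sigma$, so $(\KK_\sigma)_{\sigma'}$ is $2$-collapsible. A short direct computation inside the subsets of $\tau$ shows that the union of the two collapse sets $[\sigma,\tau]$ and $[\sigma', \tau\setminus\{u\}]$ is exactly the collection of subfaces of $\tau$ containing $v$, that is, $(\KK_\sigma)_{\sigma'} = \KK_{\sigma'}$, which gives the contradiction. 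Case A is the mirror of Case B: one checks that $\sigma$ is $2$-collapsible in $\KK_{\sigma'}$ with new maximal face $\tau \setminus \{u\}$, and that $(\KK_{\sigma'})_\sigma = \KK_\sigma$ (again the union of the two collapse sets is the set of subfaces of $\tau$ containing $v$), so $\KK_{\sigma'}$ has a $2$-collapsible face whose collapse yields a $2$-collapsible complex, hence $\KK_{\sigma'}$ is itself $2$-collapsible. Case C follows the same pattern symmetrically: $\sigma'$ is $2$-collapsible in $\KK_\sigma$ with maximal face $\tau \setminus \{u\}$, $\sigma$ is $2$-collapsible in $\KK_{\sigma'}$ with maximal face $\tau \setminus \{w\}$, and a symmetric computation yields $(\KK_\sigma)_{\sigma'} = (\KK_{\sigma'})_\sigma$; by Claim~\ref{Clagood} this common complex is $2$-collapsible, so $\sigma$ is good in $\KK_{\sigma'}$ and $\KK_{\sigma'}$ is $2$-collapsible.

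The main obstacle is the bookkeeping: correctly identifying the new maximal faces in each intermediate complex and verifying by direct Boolean-lattice calculation that the two successive collapses remove the same family of subfaces of $\tau$. The conceptual point is that, because $d = 2$, both $\sigma$ and $\sigma'$ have at most two vertices, leaving room inside $\tau$ for new intermediate maximal faces of the form $\tau \setminus \{u\}$ or $\tau \setminus \{w\}$; this is what allows the two-step collapse to be effectively a single collapse centered at the common vertex $v$, and it is precisely the feature that breaks down for $d \geq 3$, matching the author's remark that this claim is specific to dimension two.
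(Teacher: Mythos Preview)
Your proposal is correct and follows essentially the same approach as the paper: show that the two collapses can be performed in either order (or, in the subset cases, that one collapse absorbs the other) to obtain the same complex, then invoke Claim~\ref{Clagood} to get a contradiction. The paper dismisses your Cases~A and~B in a single sentence (``easy to prove the claim in the case that either $\sigma$ or $\sigma'$ is a $0$-face'') and treats only your Case~C explicitly, with exactly the computation you outline; your write-up simply fills in the details the paper omits.
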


\begin{proof}
It is easy to prove the claim in the case that either $\sigma$ or $\sigma'$ is
a $0$-face. Let us therefore consider the case that both $\sigma$ and $\sigma'$ are
$1$-faces. For contradiction suppose that $\sigma \cap \sigma' \neq \emptyset$,
i.e., $\sigma = \set{u, v}$, $\sigma' = \set{v, w}$ for some mutually different
$u, v, w \in \tau$. Then $\tau \setminus \set{u}$ is a unique maximal face in
$\KK_{\sigma}$ that contains $\sigma'$, so $(\KK_{\sigma})_{\sigma'}$ exists.
Similarly, $(\KK_{\sigma'})_{\sigma}$ exists and the same argument as in the
proof of Claim~\ref{Claorder} yields $(\KK_{\sigma})_{\sigma'} =
(\KK_{\sigma'})_{\sigma}$. Similarly as in the proof of
Claim~\ref{Cladependent}, $(\KK_{\sigma})_{\sigma'}$ is $2$-collapsible (due to
Claim~\ref{Clagood}), but it contradicts the fact that $\sigma'$ is a bad face.
\end{proof}

The complex $\KK$ is $2$-collapsible. Let
$
\KK = \KK_1 \rightarrow \KK_2 \rightarrow \cdots \rightarrow \KK_m =
\emptyset
$
be a 2-collapsing of $\KK$, where $\KK_{i+1} = \KK_i
\setminus [\sigma_i, \tau_i]$. Clearly, $\tau_1 = \tau$.
Let $k$ be the minimal integer such that $\tau_k \not \subseteq \tau$. Such $k$
exists since $\KK \neq \TWO^{\tau}$. Moreover, we can assume that all the faces
$\sigma_1, \dots, \sigma_{k}$ are edges. This assumption is possible since
collapsing a vertex can be substituted by collapsing the edges connected to the
vertex and then removing the isolated vertex at the very end of the process.
See Lemma~\ref{LemBigFacesFirst} for details.

\begin{claim}
\label{Clatauk}
The face $\sigma_k$ is a subset of $\tau$, and it is not a $2$-collapsible face
of
$\KK$.
\end{claim}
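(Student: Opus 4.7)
The plan is to prove both assertions by contradiction, exploiting the minimality of $k$. Since $\tau_1 = \tau$, we have $k \geq 2$, and by the minimal choice of $k$, $\tau_i \subseteq \tau$ for every $i < k$. The single observation underlying both parts is that this forces every face removed during the collapses $\KK = \KK_1 \rightarrow \cdots \rightarrow \KK_k$ to be a subface of $\tau$.

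For the inclusion $\sigma_k \subseteq \tau$, I would suppose some vertex $u$ of $\sigma_k$ lies outside $\tau$. Then no face containing $u$ is a subface of $\tau$, so none of the faces removed in the first $k-1$ collapses contains $u$. In particular, every face of $\KK$ that contains $\sigma_k$ (hence $u$) already belongs to $\KK_k$, and conversely. Hence $\tau_k$, the unique maximal face of $\KK_k$ containing $\sigma_k$, is also the unique maximal face of $\KK$ containing $\sigma_k$. Thus $\sigma_k$ is a $2$-collapsible face of $\KK$ with $\tau(\sigma_k) = \tau_k \neq \tau$ (since $\tau_k \not\subseteq \tau$), contradicting the remark after Claim~\ref{Cladependent} that every $2$-collapsible face of $\KK$ has the universal $\tau$ as its unique maximal superface.

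For the second assertion, I would again assume for contradiction that $\sigma_k$ is $2$-collapsible in $\KK$. By the same remark, $\tau(\sigma_k) = \tau$ in $\KK$, i.e., $\tau$ is the only maximal face of $\KK$ that contains $\sigma_k$. However, $\tau_k$ lies in $\KK_k \subseteq \KK$, contains $\sigma_k$, and is not a subface of $\tau$. Extending $\tau_k$ to an inclusion-maximal face $\tau'$ of $\KK$ if necessary, we obtain $\tau' \supseteq \tau_k$ with $\tau' \not\subseteq \tau$, so in particular $\tau' \neq \tau$. Then $\tau'$ is a maximal face of $\KK$ containing $\sigma_k$ distinct from $\tau$, contradicting uniqueness.

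I do not anticipate a real obstacle here: the whole argument reduces to the bookkeeping observation that collapses performed before step $k$ only touch subfaces of $\tau$, together with the fact that $\KK_k \subseteq \KK$, so that the relevant membership and maximality relations transfer between the two complexes in the required direction.
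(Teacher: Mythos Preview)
Your argument is correct and essentially identical to the paper's: both parts hinge on the observation that the first $k-1$ collapses remove only subfaces of $\tau$, so the star of $\sigma_k$ is unchanged if $\sigma_k \not\subseteq \tau$, which together with the universal-$\tau$ property yields the first assertion; and for the second, $\sigma_k \subseteq \tau$ and $\sigma_k \subseteq \tau_k \not\subseteq \tau$ exhibit two distinct maximal superfaces in $\KK$. The paper is terser (it does not bother extending $\tau_k$ to a maximal face), but there is no substantive difference.
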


\begin{proof}
Suppose for contradiction that $\sigma_k \not \subseteq \tau$. Then $\St (
\sigma_k, \KK) = \St (\sigma_k, \KK_i)$ since only subsets of $\tau$ are
removed from
$\KK$ during the first $i$ 2-collapses. It implies that $\sigma_k$ is a
$2$-collapsible face of $\KK$ contradicting the definition of $\tau$.

It is not a $2$-collapsible face of $\KK$ since it is contained in $\tau$ and $\tau_k
\not \subseteq \tau$.
\end{proof}

\begin{figure}
\begin{center}
\epsfbox{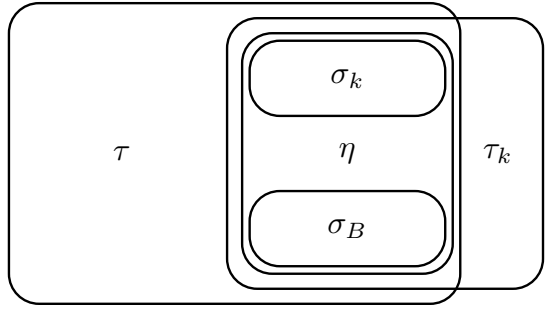}
\caption{The simplices $\tau$, $\tau_k$ and $\eta$.}
\label{Figtau}
\end{center}
\end{figure}

\begin{claim}
\label{Clagoodfaces}
The faces $\sigma_1, \sigma_2, \dots, \sigma_{k-1}$
are good faces of $\KK$. 
\end{claim}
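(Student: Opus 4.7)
The plan is to prove the statement by induction on $i$, showing that each $\sigma_i$ for $i = 1, \ldots, k-1$ is a good face of $\KK$.

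As a preliminary, I would verify that $\sigma_i$ is a $2$-collapsible face of $\KK$ with $\tau(\sigma_i) = \tau$. Since $\tau_j \subseteq \tau$ for every $j < k$, all faces removed during the first $i - 1$ collapses are subfaces of $\tau$; any superface of $\sigma_i$ in $\KK$ lying outside $\TWO^\tau$ would also lie in $\KK_i$, conflicting with the unique maximality of $\tau_i$. Hence every superface of $\sigma_i$ in $\KK$ is in $[\sigma_i, \tau]$, and $\tau$ is the unique maximal one (matching Claim~\ref{Cladependent}). The base case $i = 1$ is then immediate, since $\KK_{\sigma_1} = \KK_2$ is $2$-collapsible as an intermediate state of the given collapsing.

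For the inductive step, assume $\sigma_1, \ldots, \sigma_{i-1}$ are good. The key claim is that $\sigma_i$ shares a vertex with some earlier $\sigma_j$; once this is established, the contrapositive of Claim~\ref{Clagoodbad}, applied to the good face $\sigma_j$, forces $\sigma_i$ not to be bad, so $\sigma_i$ is good. To produce such a $j$, I would track the maximal faces of $\KK_{i+1} \cap \TWO^\tau$ through their complements in $\tau$: let $\mathcal V_i = \{V \subseteq \tau : \tau \setminus V \text{ is a maximal face of } \KK_{i+1} \cap \TWO^\tau\}$. Starting from $\mathcal V_1 = \{\{a\}, \{b\}\}$ (for $\sigma_1 = \{a, b\}$), a collapse of $\sigma_{j+1} = \{p, q\}$ with unique max $\tau \setminus V_{j+1}$ removes $V_{j+1}$ from $\mathcal V_j$ and inserts the non-dominated members of $\{V_{j+1} \cup \{p\}, V_{j+1} \cup \{q\}\}$.

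From this recurrence I would establish two invariants: (i) every $V \in \mathcal V_i$ satisfies $V \subseteq W_i$, where $W_i = \bigcup_{j \leq i} \sigma_j$, and (ii) $|\mathcal V_i| \geq 2$ for all $i \geq 1$. Invariant (i) is routine. Invariant (ii) is the crux: the simultaneous domination of both children $V_{j+1} \cup \{p\}$ and $V_{j+1} \cup \{q\}$ would require two distinct elements of $\mathcal V_j \setminus \{V_{j+1}\}$, one of the form $A \cup \{p\}$ and the other $B \cup \{q\}$ with $A, B \subsetneq V_{j+1}$ (using that $p, q \notin V_{j+1}$ and the incomparability of elements of $\mathcal V_j$), so starting from $|\mathcal V_j| = 2$ the family cannot drop to size one. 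Given both invariants, if $\sigma_i$ were disjoint from $W_{i-1}$ then every $V \in \mathcal V_{i-1}$ would be disjoint from $\sigma_i$ by (i), so every maximal face of $\KK_i \cap \TWO^\tau$ would contain $\sigma_i$; by (ii) there would be at least two such maximal superfaces of $\sigma_i$ in $\KK_i$, contradicting that $\sigma_i$ is $2$-collapsible in $\KK_i$. This produces the desired $j$ and completes the induction. I expect the main obstacle to be the combinatorial bookkeeping behind invariant (ii).
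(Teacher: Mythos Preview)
Your argument is correct and follows the same inductive skeleton as the paper: verify that each $\sigma_i$ is $2$-collapsible in $\KK$, handle $i=1$ directly, and in the inductive step reduce to showing that $\sigma_i$ must meet some earlier $\sigma_j$ (so that Claim~\ref{Clagoodbad} applies). Your antichain invariants (i) and (ii) are sound, and the domination analysis behind (ii) is right: a single $V'\in\mathcal V_j\setminus\{V_{j+1}\}$ cannot dominate both children, since that would force $V'\subseteq (V_{j+1}\cup\{p\})\cap(V_{j+1}\cup\{q\})=V_{j+1}$.

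Where you diverge from the paper is in how you rule out the ``$\sigma_i$ disjoint from all earlier $\sigma_j$'' case. The paper avoids the whole $\mathcal V_i$ apparatus with a two-line observation using only $\sigma_1=\{x,y\}$: since every $\sigma_j$ with $j<i$ is an edge disjoint from $\sigma_i$, neither $\sigma_i\cup\{x\}$ nor $\sigma_i\cup\{y\}$ can contain any $\sigma_j$, so both survive in $\KK_i$; yet $\sigma_i\cup\{x,y\}\supseteq\sigma_1$ was removed in the very first collapse. Hence $\sigma_i$ already has two incomparable superfaces in $\KK_i$, contradicting collapsibility. Your invariant (ii) proves a strictly stronger structural fact (that the antichain of maximal faces inside $\TWO^\tau$ always has size at least two), which is interesting in its own right but not needed here; the paper's shortcut extracts exactly the two witnesses required, at essentially no cost.
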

\begin{proof}
First we observe that each $\sigma_i$ is $2$-collapsible face of $\KK$ for $0
\leq i \leq k-1$. If $\sigma_i$ was not $2$-collapsible then there is a face
$\vartheta \in \KK$ containing $\sigma_i$ such that $\vartheta \not \subseteq
\tau$. Then $\vartheta \in \KK_i$ due to minimality of $k$. Consequently 
$\tau_i$ cannot be the unique maximal face of $\KK_i$ containing $\sigma_i$ since
$\vartheta$ contains $\sigma_i$ as well.

In order to show that the faces are good we proceed by induction. The face $\sigma_1$ is a good face of $\KK$ since there is a $d$-collapsing of $\KK$ starting with $\sigma_1$.

Now we assume that $\sigma_1, \dots, \sigma_{i-1}$ are good faces of $\KK$ for
$i \leq k-1$. If there is an index $j < i$ such that $\sigma_j \cap \sigma_i
\neq \emptyset$ then $\sigma_i$ is good by Claim~\ref{Clagoodbad}. 
If this is not the case then we set $\sigma_1 = \{x, y\}$. The faces $\sigma_i
\cup \{x\}$ and $\sigma_i \cup \{y\}$ belong to $\KK_i$; however, $\sigma_1
\cup \sigma_i$ does not belong to $\KK_i$ since $\sigma_1$ was collapsed. Thus
$\sigma_i$ does not belong to a unique maximal face.
\end{proof}

Let $\eta = \sigma_k \cup \sigma_B$. See
Figure~\ref{Figtau}. Claim~\ref{Clatauk} implies that $\eta \subseteq \tau$. By
Claim~\ref{Clagoodbad}
(and the fact that $\sigma_k$ is not a good face---a consequence of Claim~\ref{Clatauk}) the face $\eta$ does not contain a
good face. Thus, $\eta \in \KK_k$ by Claim~\ref{Clagoodfaces}. In particular $\eta \subseteq
\tau_k$ since $\tau_k$ is a unique maximal face of $\KK_k$ containing
$\sigma_k$, hence $\sigma_B \subseteq \tau_k$. On the other hand, $\tau$ is a
unique maximal face of $\KK \supseteq \KK_k$ containing $\sigma_B$ since
$\sigma_B$ is a $2$-collapsible face, which implies $\tau_k \subseteq \tau$. It is a contradiction that $\tau_k \not
\subseteq \tau$.
\proofend

\section{$d$-collapsible complex with a bad $d$-collapse}
\label{SecBad}

In this section we prove Theorem~\ref{ThmOrder}(ii). 

We start with describing the intuition behind the construction. Given a full
complex $\KK = \TWO^S$ (the cardinality of $S$ is $2d$), any $(d-1)$-face is
$d$-collapsible. However, once we collapse one of them, say $\sigma_B$, the
rest $(d-1)$-faces will be divided into two sets, those which are collapsible in
$\KK_{\sigma_B}$ (namely, $\Sigma$), and those which are not (namely, $\bar
\Sigma$). For example, when $d=2$, given a tetrahedron, after collapsing one
edge, among the rest five edges, four are collapsible and one is not. The idea
of the construction is to attach a suitable complex $\CC$ to $\KK$ in such a
way that 
\begin{itemize}
\item the faces of $\Sigma$ are properly contained in faces of $\CC$ (and thus
they cannot be collapsed until $\CC$ is collapsed);
\item there is a sequence of $d$-collapses of some of the faces of $\bar\Sigma$
such that $\CC$ can be subsequently $d$-collapsed.
\end{itemize}
In summary the resulting complex is $d$-collapsible because of the second
requirement. However, if we start with $\sigma_B$, we get stuck because of the
first requirement.

\subsection{Bad complex}
Now, for $d \geq 3$, we construct a \emph{bad} complex $\BB$, which is
$d$-collapsible but it contains a bad face. A certain important but technical
step of the construction is still left out. This is to give the more detailed
intuition to the reader. Details of that step are given in the subsequent
subsections.

\nomenclature{$\BB$}{the bad complex\nomrefpage}

\heading{The complex $\CCg$.}

Suppose that $\sigma$, $\gamma_1,
\dots, \gamma_t$ are already known $(d-1)$-dimensional faces of a given complex
$\LL$.
These faces are assumed to be distinct, but not necessarily disjoint. We
start with the complex $\KK = \TWO^\sigma \cup \TWO^{\gamma_1} \cup \cdots
\cup \TWO^{\gamma_t}$. We attach a certain complex $\CC$ to $\LL'$. The
resulting complex is denoted by $\CCg(\sigma; \gamma_1, \dots, \gamma_t)$. Here
we leave out the details; however, the properties of $\CCg$ are described in
the forthcoming lemma (we postpone the proof of this lemma).

\begin{lemma}
\label{LemCGlued}
Let $\LL$, $\LL'$, and $\CCg = \CCg
(\sigma; \gamma_1, \dots, \gamma_t)$
be the complexes from
the previous paragraph. Then we have:

\begin{itemize}
\item[\emph{(i)}] 
If $\sigma$ is a maximal face of $\LL$, then $\LL \cup \CCg
\collapseto \LL \setminus \set{\sigma}$. 

\item[\emph{(ii)}] The only $d$-collapsible face of $\CCg$ is the face $\sigma$.

\item[\emph{(iii)}] Suppose that $d$ is a constant. Then the number of faces of
$\CCg$ is $O(t)$.
\end{itemize}
\end{lemma}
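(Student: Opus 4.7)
The plan is to verify (iii), (ii), and (i) in that order, with the last being the substantive step. For (iii), a direct count suffices: the subcomplex $\TWO^{\sigma} \cup \bigcup_{i=1}^{t} \TWO^{\gamma_i}$ contains at most $(t+1) \cdot 2^d$ faces, and the attached complex $\CC$ is of size depending only on $d$ and therefore contributes $O(1)$ faces (for fixed $d$). Adding these bounds yields the $O(t)$ estimate for $\CCg$.

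For (ii), I would check face-by-face that every $\rho \in \CCg$ with $\dim \rho \leq d-1$ and $\rho \neq \sigma$ is contained in at least two distinct maximal faces of $\CCg$. The cases split by where $\rho$ lives: $\rho \subsetneq \sigma$ (one maximal cofacet is provided by $\sigma$ or a face above it, another is supplied by the attached $\CC$); $\rho \subseteq \gamma_i$ with $\rho \neq \sigma$ (analogously, one cofacet from $\TWO^{\gamma_i}$, another from $\CC$); and $\rho$ lying in the interior of the gadget $\CC$ (two cofacets from within $\CC$). The construction of $\CC$ in the upcoming subsections is designed precisely to furnish these extra cofacets, so this item reduces to a routine enumeration once the construction is in hand.

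The main step is (i). Here I would exhibit an explicit sequence of elementary $d$-collapses taking $\LL \cup \CCg$ to $\LL \setminus \set{\sigma}$. The plan is to peel the gadget $\CC$ first, in an order that at each intermediate step presents the face being collapsed as contained in a unique maximal face of the \emph{current} subcomplex (not just of $\CCg$ in isolation). The hypothesis that $\sigma$ is maximal in $\LL$ is essential: no face of $\LL$ outside $\TWO^{\sigma}$ can serve as an extra maximal cofacet during this peeling, since any such face would strictly contain a subface of $\sigma$ and hence already be accounted for. Once the gadget has been stripped away, $\sigma$ becomes $d$-collapsible in the residual complex, and collapsing $\sigma$ together with a final sweep of the leftover portions of $\TWO^{\sigma}$ and $\TWO^{\gamma_i}$ that are not part of $\LL$ produces $\LL \setminus \set{\sigma}$.

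The principal obstacle is reconciling (i) with (ii): the gadget $\CC$ must be rich enough that in $\CCg$ alone every face other than $\sigma$ has multiple maximal cofacets, yet sparse and well-structured enough that in $\LL \cup \CCg$ a valid collapsing order exists. Navigating this tension is what makes the explicit description of $\CC$ delicate and is why the authors postpone it to the subsequent subsections; given that construction, both (i) and (ii) reduce to verifications obtained by tracking cofacets through the prescribed collapsing sequence.
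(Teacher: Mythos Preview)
Your plan for (ii) and the overall structure of (i) are in the right spirit and roughly match what the paper does (the paper simply defers all three items to Proposition~\ref{ProProMd} and to the technical lemmas~\ref{LemSub}, \ref{LemColGlu}, \ref{LemCon}). However, your argument for (iii) contains a concrete error that propagates into your treatment of (i) and (ii).

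You assert that the attached complex $\CC$ ``is of size depending only on $d$ and therefore contributes $O(1)$ faces.'' This is false. The gadget is $\CC = \CC(\rho; \zeta_1, \dots, \zeta_t)$: it is required to contain $t$ pairwise \emph{distant} $(d-1)$-faces $\zeta_1,\dots,\zeta_t$ (Proposition~\ref{ProProMd}(i)), and hence its size must grow with $t$. The correct bound is that $\CC$ itself has $O(t)$ faces (Proposition~\ref{ProProMd}(iv)), and this is what gives the $O(t)$ bound for $\CCg$. The conclusion you state for (iii) is right, but your reasoning is not.

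This same misunderstanding weakens your sketches of (i) and (ii). The crucial structural feature you are missing is the distance condition: the $\zeta_i$ are far apart in $\CC$, so after the identifications $\zeta_i = \gamma_i$ no two glued patches interact. For (ii) this is what guarantees that the gluing does not accidentally create a new $d$-collapsible face; for (i) it is what lets the collapsing $\CC \collapseto \CC' \setminus \{\rho\}$ be carried out inside $\LL \cup \CCg$ without interference from faces of $\LL$ sitting above the $\gamma_i$'s. Your sentence ``no face of $\LL$ outside $\TWO^{\sigma}$ can serve as an extra maximal cofacet'' overlooks exactly this: the gadget meets $\LL$ not only along $\sigma$ but also along every $\gamma_i$, and faces of $\LL$ above the $\gamma_i$'s are potential obstructions that the distance condition (via Lemma~\ref{LemColGlu}) is designed to neutralise. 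Finally, your ``final sweep of the leftover portions of $\TWO^{\sigma}$ and $\TWO^{\gamma_i}$ that are not part of $\LL$'' is vacuous, since $\sigma,\gamma_1,\dots,\gamma_t \in \LL$ by hypothesis.
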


Let $S = \set{p, q_1, \dots, q_{d-1}, r_1, \dots, r_d}$ be a $2d$-element set.
Consider the full simplex $\TWO^S$. We name its $(d-1)$-faces:\\
\begin{tabular}{rcll}
$\iota$ & $=$ & $\set{p, q_1, \dots, q_{d-1}}$ & is an \emph{initial} face,  \cr
$\lambda_i$ & $=$ &  $\set{q_1, \dots, q_{d-1}, r_i}$ & are
\emph{liberation} faces for $i \in [d]$, \cr
$\sigmab $ & $=$ & $\set{r_1, \dots, r_d}$, & we will show that $\sigmab$ is a
bad  face.  \cr 
\end{tabular}\\
The remaining $(d-1)$-faces are \emph{attaching} faces; let us denote these faces
by $\alpha_1$, \dots, $\alpha_t$. 
\nomenclature{$\iota_{(*)}$}{an initial face\nomrefpage}
\nomenclature{$\lambda_*$}{a liberation face\nomrefpage}
\nomenclature{$\alpha_*$}{an attaching face\nomrefpage}

We define $\BB$ by 
$$\BB = \TWO^S
\cup \CCg(\iota; \alpha_1, \dots, \alpha_t).$$ See Figure~\ref{FigSG} for a
schematic drawing.

\begin{figure}
\begin{center}
\epsfbox{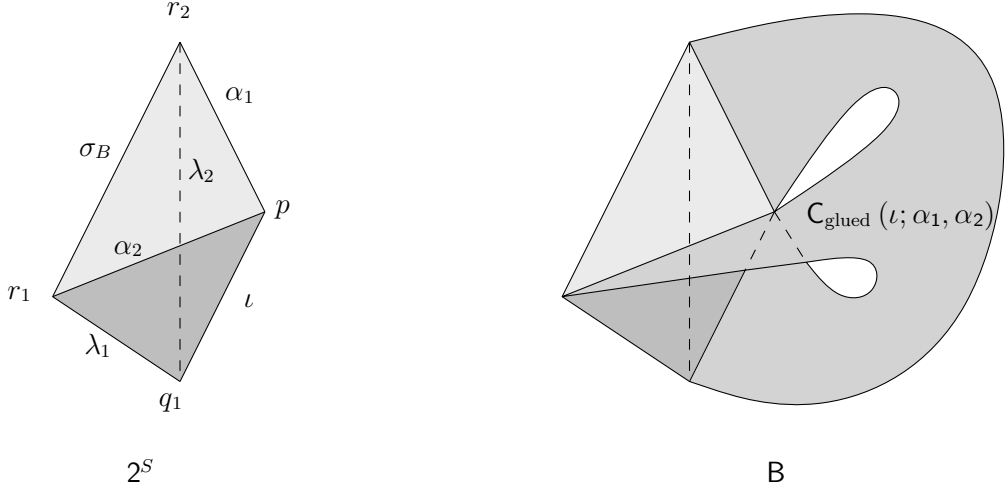}
\caption{A schematic drawing of the complexes $\TWO^S$ and $\BB$.}
\label{FigSG}
\end{center}
\end{figure}

\begin{proof}[Proof of Theorem~\ref{ThmOrder}(ii)]
We want to prove that $\BB$ is $d$-collapsible, but it contains a bad
$d$-collapsible face.

First, we observe that $\sigmab$ is a bad face. By
Lemma~\ref{LemCGlued}(ii) and
the inspection, the only $d$-collapsible
faces of $\BB$ are $\lambda_i$ and $\sigmab$ for $i \in [d]$. After collapsing
$\sigmab$ there is no $d$-collapsible face, implying that $\sigmab$ is
a bad face. 

In order to show $d$-collapsibility of $\BB$  we need a few other
definitions. 
The complex $\RR$ is defined by
$$
\RR = \setcond{\sigma \in \TWO^S}{\hbox{if } \set{q_1, \dots, q_{d-1}} \subseteq
\sigma \hbox{ then } \sigma \subseteq \iota}.
$$
We observe that $\RR \setminus \set{\iota}$ is $d$-collapsible and also that $\TWO^S
\collapseto \RR$ by collapsing all liberation faces (in any order). In fact,
the first observation is a special case of Lemma~\ref{LemSimGad}(ii) used for
the NP-reduction.

An auxiliary complex $\AA$ is defined in a similar way to $\BB$:

$$\AA = \RR \cup \CCg(\iota; \alpha_1, \dots, \alpha_t).$$

We show $d$-collapsibility of $\BB$ by the
following sequence of $d$-collapses:
$$\BB \collapseto \AA
\collapseto \RR \setminus \set{\iota} \collapseto \emptyset.$$

The fact that $\BB \collapseto \AA$ is quite obvious---it is sufficient to
$d$-collapse the liberation faces. More precisely, we use Lemma~\ref{LemSub} with
$\KK = \BB$, $\KK' = \TWO^S$, and $\LL' = \RR$. 
The fact that $\AA \collapseto \RR \setminus \set{\iota}$ follows from Lemma~\ref{LemCGlued}(i).
We already observed that $\RR \setminus \set{\iota} \collapseto \emptyset$ when defining $\RR$.
\end{proof}

\subsection{The complex $\CC$}
\nomenclature{$\CC$}{the connecting gadget\nomrefpage}
Our proof relies on
constructing $d$-dimensional $d$-collapsible complex $\CC$ such that its first
$d$-collapse is unique. We call this complex a \emph{connecting gadget}.
Precise properties of the connecting gadget are stated in Proposition~\ref{ProProMd}.

Before stating the proposition we define the notion of \emph{distant faces}. 
Suppose that $\KK$ is a simplicial complex and let $u$, $v$ be two of its vertices. By
$\dist(u,v)$ we mean their distance in graph-theoretical sense in the $1$-skeleton
of $\KK$. We say that two faces $\omega, \eta \in \KK$ are \emph{distant}
if $\dist(u, v) \geq 3$ for every $u \in \omega, v \in \eta$. 


\begin{proposition}
\label{ProProMd}
Let $d \geq 2$ and $t \geq 0$ be integers. There is a $d$-dimensional complex
$\CC = \CC
(\rho; \zeta_1, \dots, \zeta_t)$ with the following properties:

\begin{itemize}
\item[\emph{(i)}] It contains $(d-1)$-dimensional faces $\rho, \zeta_1, \dots,
\zeta_t$ such that each two of them are distant faces.
\item[\emph{(ii)}] Let $\CC' = \CC'(\rho; \zeta_1, \dots, \zeta_t)$ be the
subcomplex of $\CC$ given by $\CC' = \TWO^\rho \cup \TWO^{\zeta_1} \cup
\dots \cup \TWO^{\zeta_t}$. Then $\CC \collapseto (\CC' \setminus \set{\rho})$. In particular,
$\CC$ is $d$-collapsible since $(\CC' \setminus \set{\rho})$ is $d$-collapsible.

\item[\emph{(iii)}] The only $d$-collapsible face of $\CC$ is the face $\rho$.

\item[\emph{(iv)}] Suppose that $d$ is a constant. Then the number of faces of
$\CC$ is $O(t)$.
\end{itemize}
\end{proposition}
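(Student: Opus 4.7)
The plan is to construct $\CC(\rho; \zeta_1, \dots, \zeta_t)$ explicitly as a $d$-dimensional complex built by gluing small building blocks along $(d-1)$-faces in a tree pattern rooted at $\rho$ with leaves $\zeta_1, \dots, \zeta_t$.

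First I would build a basic \emph{segment} gadget $U$ containing two distinguished $(d-1)$-faces $\rho_U$ and $\zeta_U$, engineered so that (a) $\rho_U$ is the unique $d$-collapsible face of $U$, and (b) starting the collapse at $\rho_U$, $U$ collapses to $(\TWO^{\rho_U} \cup \TWO^{\zeta_U}) \setminus \set{\rho_U}$. The natural candidate is a chain of $d$-simplices connecting $\rho_U$ to $\zeta_U$ along shared $(d-1)$-faces (a $\SEG$-type construction), thickened by additional $d$-simplices along the chain to ensure no lower-dimensional face acquires a unique maximal coface. Analogously I would build a \emph{branching} gadget with one input face and two output faces exhibiting the same collapsing behavior.

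Given these blocks, I would assemble $\CC(\rho; \zeta_1, \dots, \zeta_t)$ as a rooted binary tree with root face $\rho$, internal nodes being branching gadgets, and $t$ leaves being the $\zeta_i$, with all edges of the tree realized by copies of $U$ made long enough to enforce the pairwise distance-at-least-three condition of (i). Each building block has $O(1)$ faces for fixed $d$ and the tree contains $O(t)$ blocks, which yields (iv). For (ii), the required collapsing sequence is obtained inductively along the tree: collapsing $\rho$ cascades along the first segment, opens up the first branching gadget, and recursively propagates down each subtree, ultimately leaving exactly $\CC' \setminus \set{\rho}$.

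The main obstacle is verifying (iii) at every dimension $\le d-1$, not just at $d-1$: one must check that no face of dimension $d-2, d-3, \ldots, 0$ has a unique maximal coface in $\CC$. Inside the interior of each block this is ensured by the block's design, but at the junctions where blocks share a $(d-1)$-face, links of low-dimensional faces can collapse in unexpected ways. I would resolve this by a uniform local analysis of the link of every low-dimensional face at every gluing site, potentially inflating each block with a few extra \emph{collar} $d$-simplices so that every low-dimensional face near a junction sits in at least two maximal $d$-faces of $\CC$.
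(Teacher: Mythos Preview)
Your tree-of-gadgets outline is a genuinely different architecture from the paper's, but it hides the real difficulty in the phrase ``a chain of $d$-simplices \dots\ thickened by additional $d$-simplices''. A chain of $d$-simplices glued along $(d-1)$-faces has \emph{two} free $(d-1)$-faces, one at each end; no amount of local thickening changes that without also making the other end free or destroying the collapsing property you need. Concretely, your segment gadget $U$ must simultaneously satisfy: (a) $\rho_U$ is the \emph{only} $d$-collapsible face of $U$, so in particular $\zeta_U$ lies in at least two $d$-faces of $U$; and (b) $U$ collapses to $\TWO^{\zeta_U}\cup\partial\rho_U$, so those $d$-faces around $\zeta_U$ must eventually be peeled off through faces other than $\zeta_U$. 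A linear stack cannot do this; you need some cycle or self-identification near $\zeta_U$ so that the collapsing wave can go \emph{around} $\zeta_U$ and remove its cofaces from the far side. Designing such a gadget is exactly the nontrivial part of the proposition, and your proposal does not supply it.

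The paper solves this with a single global gadget rather than a tree: it takes the boundary of the $d$-crosspolytope, identifies all $2^d$ facets except one into a single simplex (a generalized-dunce-hat move), and triangulates the resulting cell complex. The one unidentified facet becomes $\rho$, and every other $(d-1)$-face is automatically interior (in two $d$-faces), which gives (iii) cleanly. The faces $\zeta_1,\dots,\zeta_t$ are then obtained by subdividing a single interior $d$-simplex into thin shells, so they are interior by construction and distant from $\rho$ and from each other; (ii) follows from a short pseudomanifold-style argument (the paper's Lemma~\ref{LemGra}). If you want to salvage your approach, you would essentially need this crosspolytope/dunce-hat construction (or something equivalent) as your segment gadget, at which point the tree assembly becomes unnecessary overhead compared to the paper's single subdivision.
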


\subsection{The complex $\CC(\rho)$}
We start our construction assuming $t = 0$; i.e., we construct the connecting
gadget $\CC
= \CC(\rho)$. 

We remark that the construction of $\CC$ is in some respects similar to the
construction of generalized dunce hats. We refer
to~\cite{andersen-marjanovic-schori93} for more background.
 

\heading{\boldmath The geometric realization of  $\CC(\rho)$.}
\nomenclature{${\norm \KK \norm}$}{the geometric realization of a complex
$\KK$\nomrefpage}
First, we describe the geometric realization, $\| \CC \|$, of
$\CC$. 
Let $P$ be the $d$-dimensional \emph{crosspolytope}, the convex hull 
$$
\conv \set{\ee_1, - \ee_1, \dots, \ee_d, - \ee_d}
$$
of the vectors of the standard orthonormal basis and their negatives. It has
$2^d$ \emph{facets} 
$$
F_{\ss} = \conv \set{ s_1 \ee_1, \dots, s_d \ee_d},
$$
where $\ss = (s_i)_{i = 1}^d \in \set{-1, 1}^d$ ($\ss$ for \emph{sign}).
We want to glue all facets together except the facet $F_\uu$ where $\uu = (1,
\dots, 1)$ (see Figure~\ref{FigX23}). 

More precisely, let $\ss \in \set{-1,1}^d \setminus \set{\uu}$. 
Every $\xx \in F_{\ss}$ can be uniquely written as a convex combination $\xx =
\xx_{\aa, \ss} = a_1 s_1 \ee_1 +
\cdots + a_d s_d \ee_d$ where $\aa = (a_i)_{i=1}^d \in [0,
1]^d$ and $\sum_{i=1}^d a_i = 1$.
For every such fixed  $\aa$ we glue together the points in the
set $\setcond{\xx_{\aa, \ss}}{\ss \in \set{-1,1}^d\setminus\set{\uu}}$; by
$X$ we denote the resulting space. We will construct $\CC$ in such a
way that $X$ is a
geometric realization of $\CC$.

\begin{figure}
\begin{center}
\epsfbox{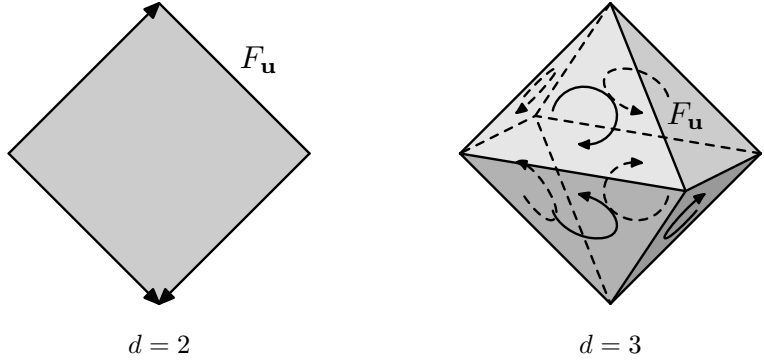}
\caption{The space $X$. The arrows denote, which facets are identified.}
\label{FigX23}
\end{center}
\end{figure}

\heading{Triangulations of the crosspolytope.}
We define two auxiliary triangulations of $P$---they are depicted in
Figure~\ref{FigTria}. The simplicial complex
$\JJ$ is the simplicial complex with vertex set $ 
\set{\mathbf 0, \mathbf e_1, - \mathbf e_1, \dots, \mathbf e_d, - \mathbf e_d}$. The
set of its faces is given by the maximal faces
$$
\set{\mathbf 0, s_1 \mathbf e_1, s_2 \mathbf e_2, \dots, s_d \mathbf
e_d} \hbox{ where } s_1, s_2, \dots, s_d \in \set{-1,1}.
$$
The complex $\JJ$ is a triangulation of $P$.

Let $\vartheta$ be the face $\set{\mathbf 0, \mathbf e_1, \dots, \mathbf
e_d}$.
The complex $\HH$ is constructed by
iterated \emph{stellar subdivisions} starting with $\JJ$ and subdividing faces of
$\JJ \setminus \TWO^\vartheta$ (first subdividing $d$-dimensional faces, then
$(d-1)$-dimensional, etc.).
Formally, $\HH$
is a complex with the vertex set $(\JJ \setminus \TWO^\vartheta) \cup
\vartheta$ and with faces of the form
$$
\set{\set{\sigma_1, \dots, \sigma_k} \cup \tau}
\hbox{ where }
 \sigma_1 \supsetneq \cdots \supsetneq \sigma_k \supsetneq \tau; \sigma_1,
\dots, \sigma_k \in \JJ \setminus \TWO^\vartheta; \tau \subseteq \vartheta; k \in \en_0. 
$$

\begin{figure}
\begin{center}
\epsfbox{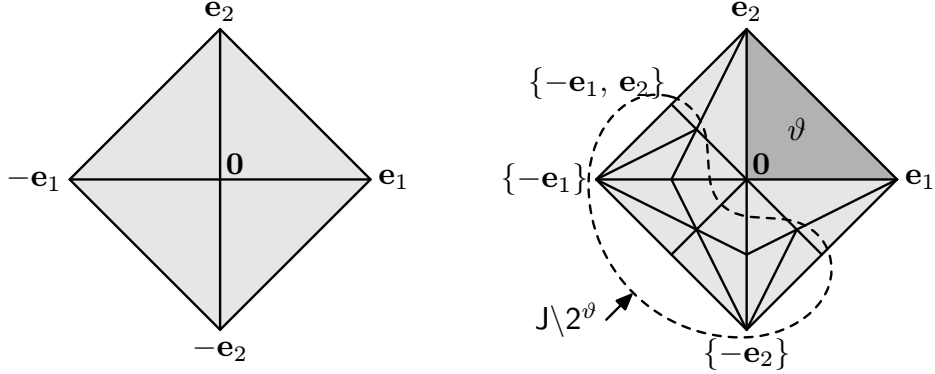}
\caption{The triangulations $\JJ$ (left) and $\HH$ (right) of $P$ with $d = 2$.}
\label{FigTria}
\end{center}
\end{figure}

\heading{\boldmath The construction of $\CC$.}
Informally, we obtain $\CC$ from $\HH$ by the same gluing as was used for 
constructing $X$ from $P$.

Formally, let $\approx$ be an equivalence relation on $(\JJ \setminus \TWO^\vartheta) \cup
\vartheta$ given by\\
\begin{tabular}{rcll}
$\mathbf e_i$ & $\approx$ & $\set{ -\mathbf e_i}$ & for $i \in [d]$, \cr
$\sigma_1$ & $\approx$ & $\sigma_2$ & for $\sigma_1, \sigma_2 \in \JJ
\setminus \TWO^\vartheta$, \cr
& & &
$\sigma_1 = \set{s_1 \ee_{k_1}, \dots, s_m
\ee_{k_m}}$,
$\sigma_2 = \set{s'_1 \ee_{k_1}, \dots, s'_m \ee_{k_m}}$ \cr
& & &
where $s_i, s'_i \in
\set{-1, 1}$ and $1 \leq k_1 < \dots < k_m \leq d$.
\cr 
\end{tabular}
 
For an equivalence relation $\equiv$ on a set $X$ we define $\equiv^+$ to be
an equivalence relation on $\mathcal Y \subset 2^X$ inherited from $\equiv$;
i.e., we have, for $Y_1, Y_2 \in \mathcal Y$, $Y_1 \equiv^+ Y_2$ if and only if 
there is a bijection $f\colon Y_1 \rightarrow Y_2$ such that $f(y) \equiv y$ for
every $y \in Y_1$.

We define $\CC = \HH /_{\approx^+}$. One can prove that $\CC$ is
indeed a simplicial complex and also that $\| \CC \|$ is homeomorphic to
$X$ (since the identification $\CC = \HH /_{\approx^+}$ was chosen to
follow the construction of $X$).

The faces of $\CC$ are the equivalence classes of $\approx^+$.
We use the notation $\langle \sigma \rangle$ for such an equivalence class
given by $\sigma \in \HH$. By $\rho$ we denote the
face $\langle \set{ \ee_1, \dots, \ee_d} \rangle$ of $\CC$.



\subsection{The complex $\CC (\rho; \zeta_1, \dots, \zeta_t)$}

Now we assume that $t \geq 1$ and we construct the complex $\CC (\rho; \zeta_1,
\dots, \zeta_t)$, which is a refinement of $\CC(\rho)$. The idea of the
construction is quite simple. We pick an interior simplex of $\CC(\rho)$; and
we subdivide it in such a way that we obtain distant $(d-1)$-dimensional
faces $\zeta_1, \dots, \zeta_t$ (and also distant from $\rho$). For completeness we show a particular way how
to get such a subdivision.

\heading{A suitable triangulation of a simplex.}
An example of the following construction is depicted in Figure~\ref{FigDdz}.
Let $\Delta$ be a $d$-dimensional (geometric) simplex with a set of vertices $V
= \set{\vv_1, \dots, \vv_{d+1}}$, let $\bb$ be its barycentre, and let $t$ be
an integer. Next, we define
$$ 
W = \setcond{\ww_{i,j}}{\ww_{i,j}
= \bb + \frac j{3t} (\vv_i - \bb), i \in [d+1], j \in [3t]}.
$$ Note that $V \subset  W$.
For $j \in [t]$, $\zeta_j$ is a
$(d-1)$-face $\set{\ww_{1,3j-2}, \ww_{2, 3j-2}, \dots, \ww_{d,
3j-2}}$. 

Now we define polyhedra $Q_1, \dots, Q_{3t}$. The polyhedron $Q_1$ is the
convex hull $\conv\set{\ww_{1,1} \dots, \ww_{d+1,1}}$.  For $j \in [3t] \setminus
\set{1}$ the polyhedron $Q_j$ is the union of the convex hulls
$$\bigcup\limits_{i \in [d+1]} \conv\setcond{\ww_{k,l} }{k \in [d+1] \setminus \set{i}, l \in \set{j-1, j} }.$$
The polyhedron $Q_1$ is a simplex. For $j > 1$, the polyhedra $Q_j$ are isomorphic
to the prisms $\partial \Delta^d \times [0,1]$, where
$\Delta^d$ is a $d$-simplex. Each such prism admits a (standard) 
triangulation such that
$\partial \Delta^d \times \set{0}$ and $\partial \Delta^d \times \set {1}$ are
not subdivided (see~\cite[Exercise~3, p.~12]{matousek03}).

\begin{figure}
\begin{center}
\epsfbox{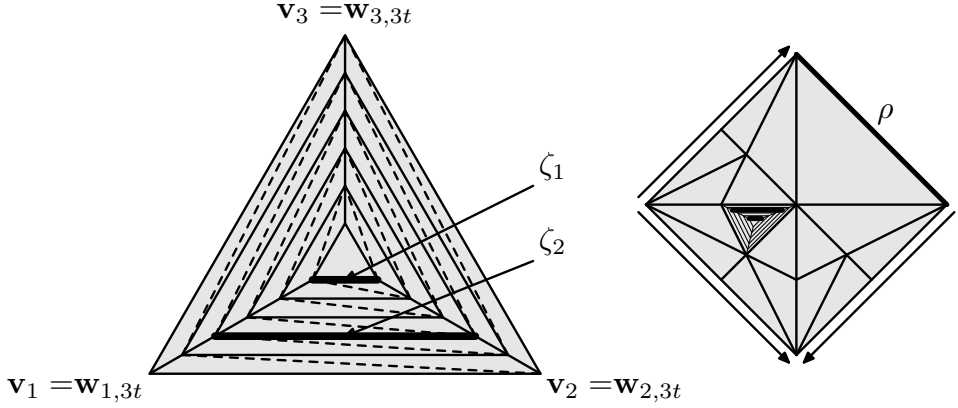}
\caption{The complex $\DD(\zeta_1, \zeta_2)$ (left) and $\CC(\rho; \zeta_1,
\zeta_2)$ (right), here $d = 2$.}
\label{FigDdz}
\end{center}
\end{figure}

Let $\DD(\zeta_1, \dots, \zeta_t)$ denote an abstract simplicial
complex on a vertex set $W$, which comes from a triangulation of $\Delta$ obtained by first
subdividing it into the polyhedra $Q_1, \dots, Q_{3t}$ and subsequently
triangulating these polyhedra as described above.



\heading{\boldmath The definition of $\CC(\rho; \zeta_1, \dots, \zeta_t)$.}
Let $\xi$ be a $d$-face of $\HH$ such that $\| \xi \| \subset \interior
\|\HH\|$. 
Although there are multiple such $d$-faces only some of them are
used as $\xi$. For example, in Figure~\ref{FigDdz}, only one out of four such
$d$-faces is chosen. 
Suppose that the set $V$ (from above) is the set of vertices of $\xi$. We define 
$$
\CC(\rho; \zeta_1, \dots, \zeta_t) = (\CC(\rho) \setminus \set{\langle \xi
\rangle }) \cup \DD(\zeta_1,
\dots, \zeta_t)
$$
while recalling that $\langle \xi \rangle$ denotes the equivalence class of
$\approx^+$ from the definition of $\CC$. 
See Figure~\ref{FigDdz}.

\begin{proof}[Proof of Proposition~\ref{ProProMd}]
The claims (i), (iii) and (iv) follow straightforwardly from the construction.
Regarding the claim (ii), informally, we first $d$-collapse the face $\rho$;
after that we $d$-collapse the ``interior'' of $\CC$ in order to collapse all
$d$-dimensional faces except the faces that should remain in $\CC' \setminus
\set{\rho}$.
Formally, we use Lemma~\ref{LemGra}.
\end{proof}

\heading{Gluing.}
Here we focus on gluing briefly discussed above Lemma~\ref{LemCGlued}.
As the name of connecting gadget suggests, we want to use it (in
Section~\ref{SecNP}) for connecting several other complexes (gadgets). In
particular, we want to have some notation for gluing this gadget. We introduce
this notation here.

\nomenclature{$\CCg$}{a complex $\CC$ glued to a given complex
$\KK$\nomrefpage}

Again we suppose that $\sigma$, $\gamma_1,
\dots, \gamma_t$ are already known $(d-1)$-dimensional faces of a given complex
$\LL$.
They are assumed to be distinct, but not necessarily disjoint. There is a
complex $\KK = \TWO^\sigma \cup \TWO^{\gamma_1} \cup \cdots
\cup \TWO^{\gamma_t}$. We take a
new copy of $\CC(\rho; \zeta_1, \dots, \zeta_t)$ and we perform identifications
$\rho = \sigma$, $\zeta_1 = \gamma_1, \dots, \zeta_t = \gamma_t$. After these
identifications, the complex $\KK \cup \CC(\rho; \zeta_1, \dots, \zeta_t)$ is
denoted by $\CCg (\sigma; \gamma_1, \dots, \gamma_t)$. Note that $\CC$ (before
gluing)  and $\CCg$ are generally not isomorphic since the gluing procedure
can identify some faces of $\CC$. 


\begin{proof}[Proof of Lemma~\ref{LemCGlued}.]
The first claim follows from Lemma~\ref{LemCon}. The second claim follows from Proposition~\ref{ProProMd}(i)
and (iii). The last claim follows from Pro\-po\-si\-tion~\ref{ProProMd}(iv).
\end{proof}


\section{NP-completeness}
\label{SecNP}

Here we prove Theorem~\ref{ThmMain}(ii). Throughout this section we assume
that $d \geq 4$ is a fixed integer. We have that $d$-COLLAPSIBILITY is in NP
since if we are given a sequence of faces of dimension at most $d-1$ we can
check in a polynomial time whether this sequence determine a $d$-collapsing of a given
complex.

For NP-hardness, we reduce the problem 3-SAT to $d$-COLLAPSIBILITY. The problem
3-SAT is NP-complete according to Cook~\cite{cook71}. Given
a 3-CNF formula $\Phi$, we construct a complex $\FF$ that is $d$-collapsible if and
only if $\Phi$ is satisfiable. 

\subsection{Sketch of the reduction}
Let us recall the construction of the bad complex $\BB$. We have started with a
simplex $\TWO^S$ and we distinguished the initial face $\iota$ and the bad face
$\sigmab$. We were allowed to start the collapsing either with $\sigmab$ or
with liberation faces and then with $\iota$. As soon as one of the
options was chosen the second one was unavailable. The idea is that these two
options should represent an assignment of variables in the formula $\Phi$. 

A disadvantage is that we cannot continue after collapsing $\sigmab$. Thus we
rather need to distinguish two initial faces $\iota^+$ and $\iota^-$ each of
them having its own liberation faces. However, we need that these two
collections of liberation faces do not interfere. That is why we have to assume
$d \geq 4$. 

For every variable $x_j$ of the formula $\Phi$ we thus construct a certain
\emph{variable gadget} $\VV_j$ with two initial faces $\iota^+_j$ and
$\iota^-_j$. For a clause $C^i$ in the formula $\Phi$ there is a \emph{clause
gadget} $\GG^i$. Initially it is not possible to collapse clause gadgets.
Assume, e.g., that $C^i$ contains variables $x_j$ and $x_{j'}$ in positive
occurrence and $x_{j''}$ in negative occurrence. Then it is possible to collapse
$\GG^i$ as soon as $\iota^+_j$, $\iota^+_{j'}$, or $\iota^-_{j''}$ was
collapsed. (This is caused by attaching a suitable copy of the connection gadget
$\CC$ from the previous section.) Thus the idea is that the complex $\FF$ in
the reduction is collapsible if and only if all clause gadgets can be
simultaneously collapsed which happens if and only if $\Phi$ is satisfiable.

There are few more details to be supplied. Similarly as for the construction of
$\BB$ we have to attach a copy $\TT$ of the connecting gadget $\CC$ to the faces
which are neither initial nor liberation (i.e., to attaching faces). This step
is necessary for controlling which faces can be collapsed. This copy of
connecting gadget is called a \emph{tidy connection} and once it is activated
(at least on of its faces is collapsed) then it is consequently possible to
collapse the whole complex $\FF$. Finally, there are inserted certain gadgets
called \emph{merge gadgets}. Their purpose is to merge the information obtained
by clause gadgets: they can be collapsed after collapsing all clause gadgets
and then they activate the tidy connection. The precise definition of $\FF$
will be described in following subsections. At the moment it can be helpful for
the reader to skip few pages and look at Figure~\ref{FigFF} (although there is
a notation on the picture not introduced yet).

\subsection{Simplicial gadgets}
Now we start supplying the details. 
As sketched above we introduce several gadgets called \emph{simplicial
gadgets}. They consist of full simplices (on
varying number of vertices) with several distinguished $(d-1)$-faces. These
gadgets generalize the complex $\TWO^S$. Every
simplicial gadget contains one or more $(d-1)$-dimensional pairwise disjoint
\emph{initial} faces. Every initial face $\iota$ contains several (possibly only
one) distinguished $(d-2)$-faces called \emph{bases} of $\iota$. 
The \emph{liberation} faces of the gadget are such $(d-1)$-faces $\lambda$
that contain a base of some initial face $\iota$, but $\lambda \neq
\iota$. The remaining $(d-1)$-faces are
\emph{attaching} faces.

Now we define several concrete examples of simplicial gadgets.
\begin{figure}
\begin{center}
\epsfbox{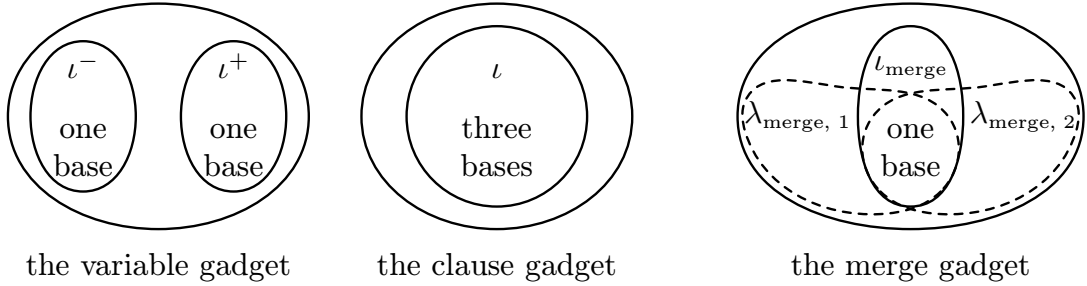}
\caption{A schematic pictures of simplicial gadgets; the liberation faces of the
merge gadget are distinguished.}
\end{center}
\end{figure}

\nomenclature{$\VV_{(*)}$}{a variable gadget\nomrefpage}

\heading{The variable gadget.}
The \emph{variable gadget} $\VV = \VV(\iota^+, \iota^-, \beta^+, \beta^-)$ is described by the
following table:

\begin{tabular}{ll}
vertices: & $p^+, q_1^+, \dots, q_{d-1}^+$, $p^-, q_1^-, \dots, q_{d-1}^-$; \cr
initial faces: & $\iota^+ = \set{p^+, q_1^+, \dots, q_{d-1}^+}$,  
 $\iota^- = \set{p^-, q_1^-, \dots, q_{d-1}^-}$; \cr
bases: & $\beta^+ = \set{q_1^+, \dots, q_{d-1}^+}$, 
  $\beta^- = \set{q_1^-, \dots, q_{d-1}^-}$. \cr
\end{tabular}\\[2mm]

\nomenclature{$\GG^{(*)}$}{a clause gadget\nomrefpage}

\heading{The clause gadget.}
The \emph{clause gadget} $\GG(\iota, \lambda_1, \lambda_2, \lambda_3)$ is given by:

\begin{tabular}{ll}
vertices: & $p_1, \dots, p_d, q$; \cr
initial face: & $\iota = \set{p_1, p_2, \dots, p_d}$;  \cr
bases: & $\beta_1 = \iota \setminus \set{p_1}$, 
  $\beta_2 = \iota \setminus \set{p_2}$. 
 $\beta_3 = \iota \setminus \set{p_3}$.
\cr
\end{tabular}\\[2mm]

Every base $\beta_j$ is contained in exactly one liberation face $\lambda_j = \beta_j
\cup \set{q}$.

\nomenclature{$\MM^{(*)}$}{a merge gadget\nomrefpage}
\heading{The merge gadget.}
The \emph{merge gadget} $\MM(\iotamerge, \lambdamergeone, \lambdamergetwo)$ is given by:

\begin{tabular}{ll}
vertices: & $p_1, \dots, p_d, q, r$; \cr
initial face: & $\iotamerge = \set{p_1, p_2, \dots, p_d}$;  \cr
base: & $\iotamerge \setminus \set{p_1}$.
\cr
\end{tabular}\\[2mm]
The merge gadget contains exactly two liberation faces, which we denote
$\lambdamergeone$
and $\lambdamergetwo$.

We close this subsection by proving a lemma about $d$-collapsings of simplicial
gadgets.
 
\begin{lemma}
\label{LemSimGad}
Suppose that $\SS$ is a simplicial gadget, $\iota$ is its initial face,
$\beta \subseteq \iota$ is a base face, and $\lambda_1, \dots, \lambda_t$ are
liberation faces containing $\beta$. Then $d$-collapsing of $\lambda_1, \dots,
\lambda_t$ (even in any order) yields a complex $\RR$ such that 
\begin{itemize}
\item[\emph{(i)}]
$\iota$ is a maximal face of $\RR$;
\item[\emph{(ii)}] $\RR \setminus \set{\iota}$ is $d$-collapsible;
\item[\emph{(iii)}] $\RR \setminus \set{\iota} \collapseto \TWO^{\iota'}$ where $\iota'$ is an
initial face different from $\iota$ (if exists).
\end{itemize} 
\end{lemma}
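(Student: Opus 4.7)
My plan is to prove the three parts in order, all resting on one structural observation: every face $\sigma \in \SS$ that strictly contains $\iota$ must contain at least one of the liberation faces $\lambda_i$. To see this, pick any $v \in \sigma \setminus \iota$; since $\SS$ is a union of full simplices and $\sigma$ lies in one such simplex $\TWO^T$ together with $\beta \subseteq \iota$, the set $\beta \cup \{v\}$ is a $(d-1)$-face of $\SS$, it contains $\beta$, and it differs from $\iota$ as $v \notin \iota$. Hence $\beta \cup \{v\}$ is a liberation face and must equal some $\lambda_i$, which gives $\lambda_i \subseteq \sigma$. In the same argument I check that any prescribed order of collapses is legitimate: at the turn of $\lambda_i$, its maximal superface in the current complex is the truncated simplex obtained from the original $\tau(\lambda_i)$ by deleting the apex vertices of the $\lambda_j$ collapsed before it, and this truncation is itself a face, so the unique maximal superface is well defined. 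Claim (i) then follows immediately: every strict superface of $\iota$ contains some $\lambda_i$ and is removed at its step, while $\iota$ itself is not removed, since $\lambda_i \not\subseteq \iota$ and $\iota \ne \lambda_i$ (the apex $v_i$ of $\lambda_i$ lies outside $\iota$).

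For (ii), the same computation yields an explicit description of $\RR$ inside the simplex $\TWO^T$ containing $\iota$: the removed faces are exactly those $\sigma \subseteq T$ with $\beta \subseteq \sigma$ and $\sigma \cap \{v_1, \dots, v_t\} \neq \emptyset$. Therefore $\RR \setminus \{\iota\}$, restricted to $\TWO^T$, coincides with $\TWO^T \setminus \{\sigma : \sigma \supsetneq \beta\}$, and the full simplices of $\SS$ not containing $\iota$ are untouched. I now $d$-collapse as follows: first collapse $\beta$, which has become a maximal face of dimension $d-2$; what remains inside $\TWO^T$ is the deletion $\{\sigma \subseteq T : \beta \not\subseteq \sigma\}$, whose maximal faces are $T \setminus \{x\}$ for $x \in \beta$, of dimension $d + t - 2$. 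To knock out each such $T \setminus \{x\}$ I collapse a $(d-1)$-face of the form $\mu_x = T \setminus (\{x\} \cup A)$ with $A \subseteq T \setminus \beta$ and $|A| = t - 1$: any other maximal face $T \setminus \{x'\}$ containing $\mu_x$ would force $x' \in A \subseteq T \setminus \beta$, contradicting $x' \in \beta$, so $\mu_x$ lies in a unique maximal face and is $d$-collapsible. After performing such collapses, possibly iterated over several rounds when $t$ is large (each round, using a fresh vertex from $T \setminus \beta$, drops the maximal dimension by one), the maximal dimension reaches $d-1$ and the argument concludes by collapsing maximal faces one at a time. The simplices of $\SS$ not containing $\iota$ are full simplices, hence themselves $d$-collapsible.

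For (iii), I use that distinct initial faces are disjoint, so any other initial face $\iota'$ sits in a different full simplex of $\SS$ than $\iota$. Running the collapses described in (ii) on every component of $\RR \setminus \{\iota\}$ except the simplex $\TWO^{\iota'}$ leaves exactly $\TWO^{\iota'}$, establishing $\RR \setminus \{\iota\} \collapseto \TWO^{\iota'}$.

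The main obstacle is the deletion-collapse stage of (ii) when $t$ is large: the maximal faces of the deletion can have dimension $d + t - 2$, far above $d - 1$ (for example $t = d$ in the bad complex $\BB$), so no single $(d-1)$-face can eliminate such a maximal face outright. The argument is saved by the fact that $T \setminus \beta$ contains $t + 1$ vertices, all outside $\beta$, which provides enough slack to iteratively shave off the maximal dimension while keeping each collapsed $(d-1)$-face inside a unique maximal face of the current complex.
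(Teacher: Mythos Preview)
Your argument rests on a structural misreading: a simplicial gadget is a \emph{single} full simplex $\TWO^V$, not a union of several. The phrase ``they consist of full simplices (on varying number of vertices)'' means that each of the three gadget types is one full simplex, with the vertex count varying between types. In particular, the variable gadget has both initial faces $\iota^+$ and $\iota^-$ sitting as disjoint $(d-1)$-faces inside the \emph{same} simplex on $2d$ vertices.

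This breaks your proof of (iii) outright. You claim that a second initial face $\iota'$ lives in a different full simplex and hence in a different ``component'' of $\RR \setminus \{\iota\}$, which you then leave untouched. But $\RR\setminus\{\iota\}$ is connected (it contains the full $1$-skeleton of $\TWO^V$), and $\TWO^{\iota'}$ is entangled with the part you are collapsing. The paper handles this differently: after the first step $\RR\setminus\{\iota\} \rightarrow \SS_\beta$ (which you also take), it invokes Lemma~\ref{LemLessFaces} with $\sigma = \{v\}$ for some $v\in\beta$ to get $\SS_\beta \collapseto \SS_{\{v\}} = \TWO^{V\setminus\{v\}}$; since initial faces are disjoint, $v\notin\iota'$, and the remaining full simplex $1$-collapses to $\TWO^{\iota'}$ by peeling off the vertices of $V\setminus(\iota'\cup\{v\})$.

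Your part (ii) is salvageable but needlessly laborious. Once you note that $\RR\setminus\{\iota\} = \SS_\beta \cup \{\beta\}$ and collapse $\beta$, you are left with exactly $\SS_\beta$; the paper then applies Lemma~\ref{LemLessFaces} once more (with $\sigma=\emptyset$) to conclude $\SS_\beta\collapseto\SS_\emptyset=\emptyset$ in one line. Your explicit ``rounds'' of collapses are essentially reproving that lemma by hand, and the sketch (``possibly iterated over several rounds \dots\ drops the maximal dimension by one'') would need a careful inductive statement about what the maximal faces look like after each round to be complete. The first round works as you describe, but the claim that subsequent rounds behave identically is not argued.

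Part (i) is essentially correct and matches the paper's argument once you set $T = V$.
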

\begin{proof}
We prove each of the claims separately.
\begin{itemize}
\item[(i)] Let $V$ be the set of vertices of $\SS$ and let $\lambda_{t+1} =
\iota$. We (inductively) observe that $d$-collapsing of faces $\lambda_1, \dots,
\lambda_k$ for $k \leq t$ yields a complex in which $\lambda_{k+1}$ is contained
in a unique maximal face $(V \setminus (\lambda_1 \cup \dots \cup \lambda_k))
\cup \beta$. This implies that $\RR$ is well defined and also finishes the
first claim since
$$
(V \setminus (\lambda_1 \cup \dots \cup \lambda_t)) \cup \beta = \iota.
$$

We remark that the few details skipped here are exactly the same as in the proof of
Lemma~\ref{LemLessFaces}.
\item[(ii)] We observe that $\beta$ is a maximal $(d-2)$-face of $\RR \setminus
\set{\iota}$ and $\SS_{\beta} = \RR \setminus \set{\iota, \beta}$, hence $\RR \setminus \set{\iota} \rightarrow \SS_{\beta}$. (We
recall that $\KK_{\sigma}$ denotes the resulting complex of an elementary
$d$-collapse $\KK \rightarrow \KK_{\sigma} = \KK \setminus [\sigma,
\tau(\sigma)]$.)
Next, $\SS_\beta \collapseto \SS_{\emptyset} = \emptyset$ by
Lemma~\ref{LemLessFaces}.
\item[(iii)] Similarly as before we have  $
\RR \setminus \set{\iota} \rightarrow \SS_{\beta}$. Let $v$ be a vertex of
$\beta$, we have $\SS_{\beta} \rightarrow \SS_{\set{v}}$ by
Lemma~\ref{LemLessFaces}. The complex $\SS_{\set{v}}$ is a full simplex ($\SS$
with removed $v$), this complex even 1-collapse to $\TWO^{\iota'}$ by
collapsing vertices of $V \setminus (\iota' \cup \set{v})$ (in any order).
\end{itemize}
\end{proof}

\subsection{The reduction}
Let the given 3-CNF formula be $\Phi = C^1 \wedge C^2 \wedge \cdots \wedge
C^n$,
where each $C^i$ is a clause with exactly three literals (we assume without
loss of generality that
every clause contains three different variables). Suppose that $x_1,
\dots, x_m$ are variables appearing in the formula. For every such variable
$x_j$ we take a fresh copy of the variable gadget and we denote it by $\VV_j =
\VV_j(\iota_j^+, \iota_j^-, \beta_j^+, \beta_j^-)$. For every clause $C^i$ containing variables
$x_{j_1}$, $x_{j_2}$ and $x_{j_3}$ (in a positive or negative occurrence) we take
a new copy of the clause gadget and we denote it by $\GG^i = \GG^i(\iota^i,
\lambda^i_{j_1}, \lambda^i_{j_2}, \lambda^i_{j_3})$. Moreover, for $C^i$ with $i \geq
2$, we also take a new copy of the merge gadget and we denote it $\MM^i =
\MM^i(\iotamerge^i,\lambdamergeone^i, \lambdamergetwo^i)$.

Now we connect these simplicial gadgets by glued copies of the connecting
gadget called \emph{connections}. 

Suppose that a
variable $x_j$ occurs positively in the clauses $C^{i_1}, \dots, C^{i_k}$.
We construct the \emph{positive occurrence connections} by setting 
\nomenclature{$\OO^{+/-}_*$}{a positive/negative occurrence
connection\nomrefpage}

$$
\OO_j^+ = \CCg(\iota_j^+; \lambda^{i_1}_j, \dots, \lambda^{i_k}_j).
$$
The \emph{negative occurrence connections} $\OO_j^-$ are constructed similarly (we use
$\iota_j^-$ instead of $\iota_j^+$; and we use clauses in which is $x_j$ in
negative occurrence).

\nomenclature{$\II^*_*$}{a merge connection\nomrefpage}

The \emph{merge connections} are defined by\\
\begin{tabular}{llll}
$\II^1_1$ & $=$ & $\CCg(\iota^1; \lambdamergetwo^2)$; & \cr
$\II^i_1$ & $=$ & $\CCg(\iota^i; \lambdamergeone^i)$ & where $i \in \set{2,\dots,
n}$; \cr
$\II^i_2$ & $=$ & $\CCg(\iotamerge^i; \lambdamergetwo^{i+1})$ & where $i \in \set{2,\dots,
n-1}$. \cr
\end{tabular} \\
For convenient notation we denote $\II^1_1$ also by $\II^1_2$.

\nomenclature{$\TT$}{the tidy connection\nomrefpage}
Finally, the \emph{tidy connection} is defined by 
$$
\TT = \CCg(\iotamerge^n; \alpha_1, \dots, \alpha_t)
$$
where $\alpha_1, \dots, \alpha_t$ are attaching faces of all simplicial gadgets
in the reduction, namely the variable gadgets $\VV_j$ for $j \in [m]$, the clause
gadgets $\GG^i$ for $i \in [n]$, and the merge gadgets $\MM^i$ for $i \in
\set{2, \dots, n}$.

The complex $\FF$ in the reduction is defined by
$$
\FF = \bigcup\limits_{j =1}^m \VV_j \cup \bigcup\limits_{i = 1}^n \GG^i
\cup \bigcup\limits_{i = 2}^n \MM^i \cup \bigcup\limits_{j=1}^m (\OO^+_j \cup
\OO^-_j) \cup  \bigcup\limits_{i=1}^n \II^i_1
\cup \bigcup\limits_{i=2}^{n-1} \II^i_2 \cup \TT.
$$

See Figure~\ref{FigFF} for an example.

We observe that the number of faces of $\FF$ is polynomial in the number of
clauses in the formula (regarding $d$ as a constant). Indeed, we see that
the number of gadgets (simplicial gadgets and connections) is even linear in the number of variables. Each simplicial gadget has a constant size. Each connection has at most linear size due to Lemma~\ref{LemCGlued}(iii). 

\begin{figure}
\begin{center}
\epsfbox{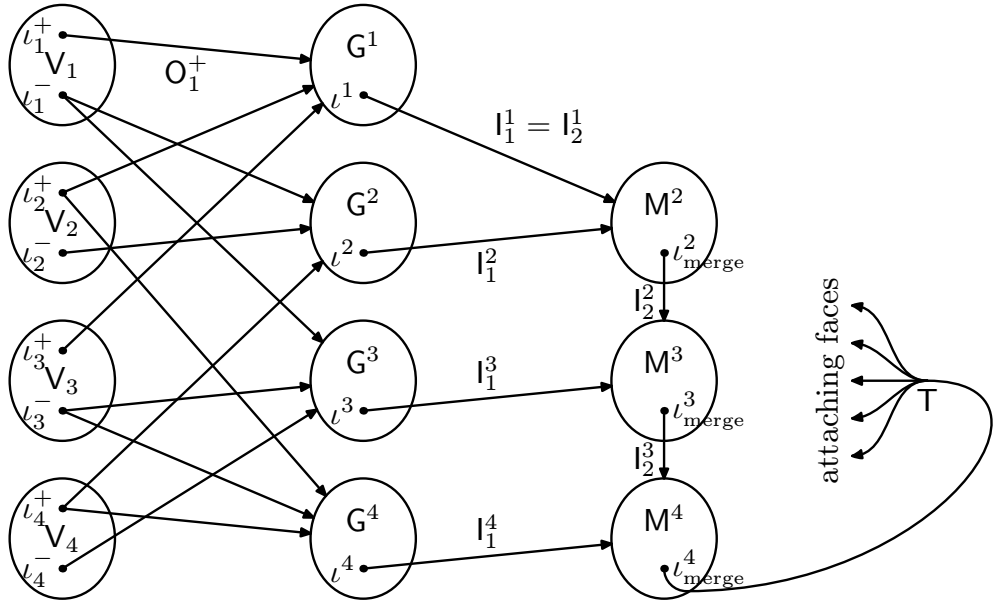}
\caption{A schematic example of $\FF$ for the
formula $\Phi = (x_1 \vee x_2 \vee x_3)\wedge(\neg x_1 \vee \neg x_2 \vee
x_4)\wedge(\neg x_1 \vee \neg x_3 \vee \neg x_4)\wedge(x_2 \vee \neg x_3 \vee
x_4)$. Initial faces are drawn as points. (Multi)arrows denote
connections. Each (multi)arrow points from the unique $d$-collapsible face of
the connection to simplicial gadgets that are attached to the connection by
some of its liberation faces.}
\label{FigFF}
\end{center}
\end{figure}

\begin{figure}
\begin{center}
\epsfbox{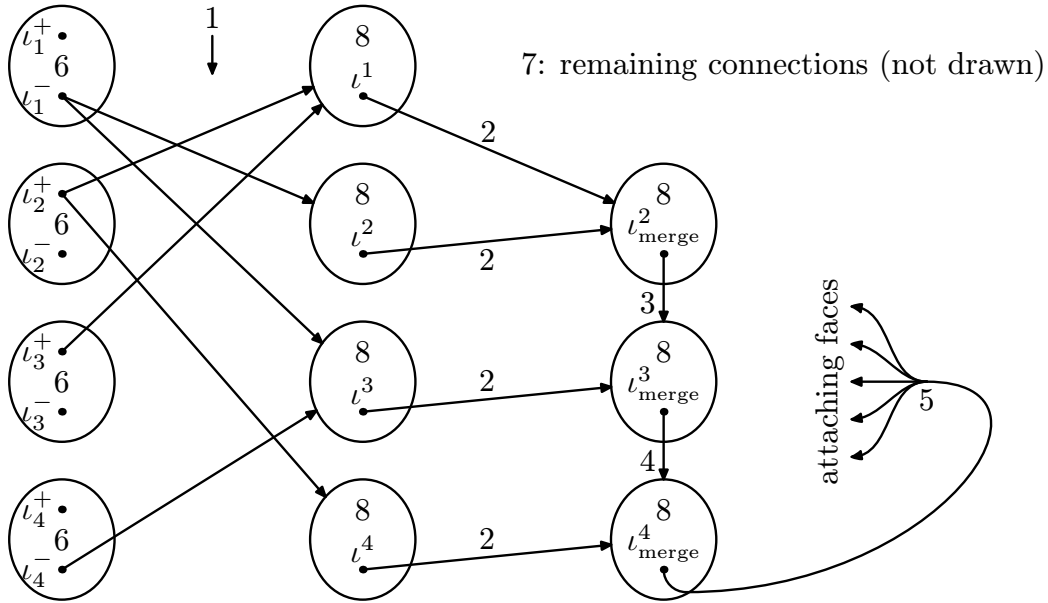}
\caption{
$d$-collapsing of $\FF$ for the $\Phi$ from Figure~\ref{FigFF} assigned 
(FALSE, TRUE, TRUE, FALSE). The numbers denote the order in which the parts of $\FF$
vanish.
}
\label{FigColSat}
\end{center}
\end{figure}

\heading{Collapsibility for satisfiable formulae.}
We suppose that the formula is satisfiable and we describe a collapsing of
$\FF$; see Figure~\ref{FigColSat}.

We assign each variable TRUE or FALSE so that the formula is satisfied.
For every variable gadget $\VV_j$ we proceed as follows. First, suppose that $x_j$
is assigned TRUE. We $d$-collapse\footnote{Note that after $d$-collapsing a
liberation face containing $\beta^+_j$ the liberation faces containing
$\beta^-_j$ are no more $d$-collapsible (and vice versa). This will be a key
property for showing that unsatisfiable formulae yield to non-collapsible
complexes.} the liberation faces containing $\beta^+_j$
(see Lemma~\ref{LemSimGad}(i)), after that $\iota^+_j$ is $d$-collapsible, and we $d$-collapse
$\OO^+_j$ (following Lemma~\ref{LemCGlued}(i) in the same way as in the proof of
Theorem~\ref{ThmOrder}(ii)). Similarly, we $d$-collapse
$\OO^-_j$ if $x_j$ is assigned FALSE. 

We use several times Lemma~\ref{LemSimGad}(i) and
Lemma~\ref{LemCGlued}(i) in the following paragraphs. The use is very similar
is in the previous one, thus we do not mention these lemmas again.

After $d$-collapsings described above, we have that every clause gadget $\GG^i$ contains at
least one liberation face that is $d$-collapsible since we have chosen such an
assignment that the formula is satisfied. We $d$-collapse this liberation face 
and after that the face $\iota^i$ is $d$-collapsible. We continue with
$d$-collapsing the merge gadgets $\II^i_1$ for $i \in [n]$.

The next step is that we gradually $d$-collapse the merge gadgets $\II^i_2$ for
$i \in \set{2, \dots, n-1}$. For this, we have that both liberation faces of
$\II^2_2$ are $d$-collapsible, we $d$-collapse them and we have that
$\iotamerge^2$ is $d$-collapsible. We $d$-collapse $\II^2_2$ and now we
continue with the same procedure with $\II^3_2$, the $\II^4_2$, etc. 

Finally, we $d$-collapse the tidy gadget. The $d$-collapsing of tidy gadget
makes all the attaching faces of simplicial gadgets $d$-collapsible. After this
``tidying up'' we can $d$-collapse all variable gadgets (using
Lemma~\ref{LemSimGad}(iii)), then all remaining connections, and at the end all
remaining simplicial gadgets due to Lemma~\ref{LemSimGad}(ii).

\heading{Non-collapsibility for unsatisfiable formulae.}
Now we suppose that $\Phi$ is unsatisfiable and we prove that $\FF$ is not
$d$-collapsible.

For contradiction, we suppose that $\FF$ is $d$-collapsible.
Let
$$
\FF = \FF_1 \rightarrow \FF_2 \rightarrow \cdots \rightarrow \emptyset
$$
be a $d$-collapsing of $\FF$. We call it \emph{our}
$d$-collapsing.
For a technical reason, according to
Lemma~\ref{LemBigFacesFirst}, we can assume that first $(d-1)$-dimensional faces
are collapsed and after that faces of less dimensions are removed.

Let us fix a subcomplex $\FF_\ell$ in our $d$-collapsing. Let $\NN$ be a connection (one of that forming
$\FF$) and let $\NN_\ell = \FF_\ell \cap \NN$. We say that $\NN$ is \emph{activated} in
$\FF_\ell$ if $\NN_\ell$ is a proper subcomplex of $\NN$.

The connection $\NN$ is defined as $\CCg (\sigma; \gamma_1, \dots, \gamma_s)$ for
some $(d-1)$-faces $\sigma, \gamma_1, \dots, \gamma_s$ of simplicial gadgets in
$\FF$. We remark that Lemma~\ref{LemCGlued}(ii) implies that if $\NN$ is
activated in $\FF_\ell$ then $\sigma \not \in \FF_\ell$. 


We also prove the following lemma about activated connections.

\begin{lemma}
\label{LemConActivated}
Let $\FF_\ell$ be a complex from our $d$-collapsing such that $\TT$
is not activated in $\FF_\ell$. Then we have:

\begin{itemize}
\item[\emph{(i)}] Let $j \in [m]$. If the positive occurrence connection $\OO^+_j$ is activated
in $\FF_\ell$, then the negative occurrence connection $\OO^-_j$ is not activated in
$\FF_\ell$ (and vice versa).

\item[\emph{(ii)}] Let $i \in [n]$. If the merge connection $\II^i_1$ 
is activated in $\FF_\ell$, then at least one of the three occurrence
connections attached to $\GG^i$ is activated in $\FF_\ell$.

\item[\emph{(iii)}] Let $i \in \set{2, \dots, n-1}$. If the merge connection
$\II^i_2$ 
is activated in $\FF_\ell$, then the merge connections $\II^i_1$ and $\II^{i-1}_2$
are activated in $\FF_\ell$.

\end{itemize}
\end{lemma}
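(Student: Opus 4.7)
The plan is to derive all three parts from the following strengthening of Lemma~\ref{LemCGlued}(ii), which I will call the \emph{Activation Lemma}: if a connection $\NN = \CCg(\sigma;\gamma_1,\dots,\gamma_s)$ is activated in $\FF_{\ell'}$, then $\sigma \notin \FF_{\ell'}$. The proof of this auxiliary lemma proceeds by inspecting the first step $k-1$ at which $\NN$ becomes activated; the collapsed face $\tau_{k-1}$ then lies in $\NN$. If $\tau_{k-1}$ contains a vertex interior to the $\CC$-part of $\NN$, every superface of $\tau_{k-1}$ in $\FF_{k-1}$ already lies in $\NN$ (which is still intact at this moment), so $\tau_{k-1}$ is $d$-collapsible in $\NN$ and Lemma~\ref{LemCGlued}(ii) forces $\tau_{k-1}=\sigma$ -- impossible since $\sigma$ has no interior vertex. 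Otherwise $\tau_{k-1}$ has only shared vertices, and by pairwise distantness of $\sigma,\gamma_1,\dots,\gamma_s$ in $\CC$ (Proposition~\ref{ProProMd}(i)) it sits inside $\sigma$ or inside a single $\gamma_i$. The case $\tau_{k-1}\subseteq\gamma_i$ is ruled out because $\gamma_i$ is not $d$-collapsible in $\NN$ (Lemma~\ref{LemCGlued}(ii)) and hence has at least two maximal superfaces in $\NN$; both contain $\tau_{k-1}$ and persist in $\FF_{k-1}$, so $\tau_{k-1}$ cannot have a unique maximum there. Hence $\tau_{k-1}\subseteq\sigma$, and since $\sigma\in\FF_{k-1}$ is a superface of $\tau_{k-1}$, it lies in $[\tau_{k-1},\tau(\tau_{k-1})]$ and is removed at this step.

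Given the Activation Lemma and the standing hypothesis that $\TT$ is not activated (so every face of $\TT$, in particular the $\CC$-maximum in $\TT$ above each attaching face of every simplicial gadget, is still present), each part is handled by tracing how the relevant initial face must have been removed. For (i), I assume for contradiction that both $\OO^+_j$ and $\OO^-_j$ are activated, so $\iota^+_j,\iota^-_j\notin\FF_\ell$. The only maximal superfaces of $\iota^+_j$ in $\FF$ are the top face $V$ of $\VV_j$ and the $\CC$-maximum $\mu^+$ containing $\iota^+_j$ in $\OO^+_j$; since $\mu^+$ can be removed only after $\OO^+_j$ is activated (which by the Activation Lemma requires $\iota^+_j$ already gone -- circular), the collapse that removes $\iota^+_j$ has $\tau(\cdot)=\mu^+$, so $V$ must have been removed earlier. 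The collapse that removes $V$ cannot use a subface of an attaching $(d-1)$-face of $V$ (blocked by the $\TT$-hypothesis) nor of $\iota^\pm_j$ (circular via $\mu^\pm$), so it must be a liberation face of $\VV_j$; WLOG one containing $\beta^+_j$. A direct inspection of $\VV_j=\TWO^V$ (with $|V|=2d$) then shows that after this first $\beta^+$-liberation collapse every $\beta^-$-liberation face has at least two maximal $\VV_j$-superfaces, and no $d$-collapsible face of the resulting subcomplex has any of the new maxima $V\setminus\{q_i^+\}$ as its unique maximum in $\FF$ without being contained in an attaching face or inside $\iota^-_j$ -- both blocked. Hence $\iota^-_j$ retains multiple maxes and can never be removed, a contradiction.

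Parts (ii) and (iii) follow the same template. For (ii), the Activation Lemma gives $\iota^i\notin\FF_\ell$, and as in (i) its removal forces the top face of $\GG^i$ to be removed first; the collapse that does so cannot use $\tau\subseteq\iota^i$ (circular) or $\tau$ inside an attaching face (blocked), leaving $\tau$ inside one of the liberation faces $\lambda_k$ ($k\in\{1,2,3\}$), and the $\CC$-maximum of $\lambda_k$ inside the attached occurrence connection $\OO^\pm_{j_k}$ must then be gone -- exactly the activation of that connection. For (iii), $\iotamerge^i\notin\FF_\ell$ forces the removal of the top face of $\MM^i$ and then of a secondary maximal face of $\MM^i$ that contains $\iotamerge^i$ after the top is gone; the only non-blocked, non-circular candidates for the two required collapses are $\lambdamergeone^i$ and $\lambdamergetwo^i$, which respectively demand the prior activations of $\II^i_1$ and $\II^{i-1}_2$. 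The main obstacle throughout is the combinatorial case analysis in part (i), where one has to enumerate the subfaces of $V$ that could conceivably have $V\setminus\{q_i^+\}$ as unique maximum and verify that each either hits an attaching face or sits inside $\iota^-_j$.
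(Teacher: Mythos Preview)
Your Activation Lemma is correct and is exactly the observation the paper makes in one line just before the statement (``Lemma~\ref{LemCGlued}(ii) implies that if $\NN$ is activated in $\FF_\ell$ then $\sigma \notin \FF_\ell$''); you prove it more carefully than the paper does, and that part is fine. Your treatment of (ii) and (iii) is close in spirit to the paper's, though you take an extra detour through the top face of the simplicial gadget where the paper simply looks at the \emph{first} relevant collapse $\sigma_{k'}$ inside the gadget and observes directly that it must be a liberation face.

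The real divergence is in part~(i), and here your argument has a gap. You show that the collapse removing $V$ must be a $\beta^+$-liberation face, and then you assert by ``direct inspection'' that afterwards $\iota^-_j$ ``retains multiple maxes and can never be removed''. But your inspection only analyses the state after \emph{one} $\beta^+$-collapse; it gives no inductive invariant covering further collapses (more $\beta^+$-liberation faces, then $\iota^+_j$ via $\OO^+_j$, etc.), and as you yourself note, the enumeration of subfaces is not carried out. You also never invoke Lemma~\ref{LemBigFacesFirst}, so you are implicitly allowing collapsed faces of dimension $<d-1$, which multiplies the cases (for instance $\beta^+_j$ itself has no attaching $(d-1)$-superface and must be ruled out separately via $\OO^+_j$).

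The paper sidesteps all of this with a much shorter argument: using Lemma~\ref{LemBigFacesFirst} it assumes every collapsed face has dimension $d-1$, then lets $k^+$ and $k^-$ be the \emph{first} indices at which a positive, respectively negative, face of $\VV_j$ is collapsed, and assumes $k^+<k^-$. The single key computation is that $|\sigma_{k^+}\setminus\sigma_{k^-}|\geq d-2\geq 2$ (this is precisely where $d\geq 4$ enters), and that for each $s$ in this set, $\sigma_{k^-}\cup\{s\}$ does not contain $\beta^+_j$ and hence survives all prior (positive) collapses, while $\sigma_{k^-}\cup(\sigma_{k^+}\setminus\sigma_{k^-})\supseteq\sigma_{k^+}$ is already gone. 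Thus $\sigma_{k^-}$ has at least two maximal superfaces in $\FF_{k^-}$, contradicting its collapsibility. This one-shot comparison of the first positive and first negative collapse replaces your open-ended state-tracking argument entirely.
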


\begin{proof}
Let us consider first $\ell - 1$ $d$-collapses of our $d$-collapsing
$$
\FF = \FF_1 \rightarrow \FF_2 \rightarrow \cdots \rightarrow \FF_\ell,
$$
where $\FF_{k+1} = \FF_k \setminus
[\sigma_k, \tau_k]$ for $k \in [\ell - 1]$. 
According to assumption on our $d$-collapsing, we have that $\sigma_1, \dots,
\sigma_{\ell - 1}$ are $(d-1)$-dimensional (since $\TT$ is not activated in
$\FF_\ell$ yet).

Now we prove each of the claims separately.
\begin{itemize}
\item[(i)]
For a contradiction we suppose that both $\OO^+_j$ and $\OO^-_j$ are activated in
$\FF_\ell$.
 
We consider the variable gadget $\VV_j$. 
We say that an index $k \in [\ell -
1]$ is \emph{relevant} if $\sigma_k \in \VV_j$. We observe that if $k$ is a
relevant index then $\sigma_k$ is a liberation face or an initial face of
$\VV_j$, because attaching faces are contained in $\TT$.

By \emph{positive} face we mean either the initial face $\iota^+_j$ or a
liberation face containing $\beta^+_j$. A \emph{negative} face is defined
similarly. Let $k^+$ (respectively $k^-$) be the smallest relevant index such that $\sigma_{k^+}$ is
a positive face (respectively negative face). These indexes have to exist since both
$\OO^+_j$ and $\OO^-_j$ are activated in $\FF_\ell$. Without loss of generality
$k^+ < k^-$. 

We show that $\sigma_{k^-}$ is not a $d$-collapsible face of $\FF_{k^- -1}$, thus we get a
contradiction. Indeed, let $S = \sigma_{k^+} \setminus \sigma_{k^-}$. We have $|S|
\geq 2$ since $d \geq 4$ (here we crucially use this assumption). Let $s \in
S$. Then we have $\sigma_{k^-} \cup \set{s} \in \FF_{k^- -1}$, because
$\sigma_{k^-} \cup \set{s}$ does not contain a positive subface (it does not contain $\beta^+_j$ since $|\sigma_{k^-} \cap \beta^+_j| \leq 1$,
but $|\beta^+_j| \geq 3$). On the other
hand $\sigma_{k^-} \cup S \not \in \KK_{k^- -1}$ since it contains
$\sigma_{k^+}$. I.e., $\sigma_{k^-}$ is not in a unique maximal face.

\item[(ii)]
We again define a relevant index; this time $k \in [\ell - 1]$ is \emph{relevant}
if $\sigma_k \in \GG^i$. We consider the smallest relevant index $k'$. Again we
have that $\sigma_{k'}$ is either the initial face $\iota^i$ or a liberation
face of $\GG^i$. In fact, $\sigma_{k'}$ cannot be $\iota^i$: by
Lemma~\ref{LemCGlued}(ii) we would have that $\II^i_1 \subseteq \FF_{k'-1}$ and
also $\GG^i \subseteq \FF_{k'-1}$ from minimality of $k'$, which would
contradict that $\sigma_{k'}$ is a collapsible face of $\FF_{k'-1}$. Thus
$\sigma_{k'}$ is a liberation face of $\GG^i$. This implies, again by
Lemma~\ref{LemCGlued}(ii), that at least one of the occurrence gadgets attached
to liberation faces is activated even in $\FF_{k'-1}$.

\item[(iii)] By a similar discussion as in previous step we have that at least
one of the liberation faces $\lambdamergeone^i$ and $\lambdamergetwo^i$ of
$\MM^i$ have to be $d$-collapsed before $d$-collapsing $\iotamerge^i$. However,
we observe that $d$-collapsing only one of these faces is still insufficient
for possibility of $d$-collapsing $\iotamerge^i$. Hence both of the liberation
faces have to be $d$-collapsed, which implies that both the gadgets $\II^i_1$
and $\II^{i-1}_2$ are activated in $\FF_{\ell}$.
\end{itemize}
\end{proof}

We also prove an analogy of Lemma~\ref{LemConActivated} for the tidy
gadget. We have to modify the assumptions, that is why we use a separate lemma.
The proof is essentially same as the proof of Lemma~\ref{LemConActivated}(iii),
therefore we omit it.

\begin{lemma}
\label{LemTidyActivated}
Let $\ell$ be the largest index such that $\TT$ is not activated in
$\FF_{\ell}$, then the merge connections $\II^n_1$ and $\II^{n-1}_2$ are
activated in $\FF_{\ell}$.
\end{lemma}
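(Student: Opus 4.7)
The plan is to imitate the proof of Lemma~\ref{LemConActivated}(iii), now with $\TT = \CCg(\iotamerge^n; \alpha_1, \dots, \alpha_t)$ playing the role of $\II^i_2 = \CCg(\iotamerge^i; \lambdamergetwo^{i+1})$. First I would invoke the remark following the definition of activation (a direct consequence of Lemma~\ref{LemCGlued}(ii)): since $\TT$ is activated in $\FF_{\ell+1}$ but not in $\FF_\ell$, the distinguished face $\iotamerge^n$ of $\TT$ must have been $d$-collapsed at some step $k \leq \ell$ of our $d$-collapsing.

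Next I would analyze what is required for $\iotamerge^n$ to become $d$-collapsible at step $k$. Apart from its cofaces inside $\TT$, the face $\iotamerge^n$ has cofaces coming from the merge gadget $\MM^n$, namely $\iotamerge^n \cup \set{q}$, $\iotamerge^n \cup \set{r}$, and $\iotamerge^n \cup \set{q,r}$. For $\iotamerge^n$ to have a unique maximal coface in $\FF_k$, all of these $\MM^n$-cofaces must already have been removed. Exactly as in the proof of Lemma~\ref{LemConActivated}(iii), this can only happen by first $d$-collapsing \emph{both} liberation faces $\lambdamergeone^n$ and $\lambdamergetwo^n$ of $\MM^n$: collapsing only one of them leaves at least one $\MM^n$-coface of $\iotamerge^n$ alive (for example, $d$-collapsing $\lambdamergeone^n$ erases $\iotamerge^n \cup \set{q}$ and $\iotamerge^n \cup \set{q,r}$ but not $\iotamerge^n \cup \set{r}$). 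Thus both $\lambdamergeone^n$ and $\lambdamergetwo^n$ have been removed by step $\ell$.

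Finally I would translate this into the desired activation statement. The face $\lambdamergeone^n$ occurs as the $\gamma$-face of $\II^n_1 = \CCg(\iota^n; \lambdamergeone^n)$, and $\lambdamergetwo^n$ occurs as the $\gamma$-face of $\II^{n-1}_2 = \CCg(\iotamerge^{n-1}; \lambdamergetwo^n)$. Hence once these liberation faces are $d$-collapsed, $\II^n_1 \cap \FF_\ell$ is a proper subcomplex of $\II^n_1$ and $\II^{n-1}_2 \cap \FF_\ell$ is a proper subcomplex of $\II^{n-1}_2$, which is precisely the assertion that both merge connections are activated in $\FF_\ell$. The main subtlety, as in the referenced proof, is verifying that each liberation-face collapse actually removes the appropriate $\MM^n$-cofaces of $\iotamerge^n$, i.e., identifying $\tau(\lambdamergeone^n)$ and $\tau(\lambdamergetwo^n)$ at the moment of their collapse; this is the same routine bookkeeping already implicit in Lemma~\ref{LemConActivated}(iii).
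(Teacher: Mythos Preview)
Your proposal is correct and is exactly what the paper intends: it omits the proof with the remark that it is ``essentially the same as the proof of Lemma~\ref{LemConActivated}(iii)'', and you have faithfully carried out that argument for $\TT$ in place of $\II^i_2$. One small imprecision: since $\TT$ is not activated in $\FF_\ell$ we have $\iotamerge^n \in \FF_\ell$, so the collapse of $\iotamerge^n$ happens exactly at step $\ell$ (not at some earlier $k<\ell$); this does not affect the rest of your reasoning.
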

\proofend

Now we can quickly finish the proof of non-collapsibility. Let $\ell$ be the
integer from Lemma~\ref{LemTidyActivated}. By this lemma and repeatedly using
Lemma~\ref{LemConActivated}(iii) we have that all merge connections are
activated in $\FF_{\ell}$. By Lemma~\ref{LemConActivated}(ii), for every clause
gadget $\GG^i$ there is an occurrence gadget attached to $\GG^i$, which is
activated in $\FF_\ell$. Finally, Lemma~\ref{LemConActivated}(i) implies that
for every variable $x_j$ at most one of the occurrence gadgets $\OO^+_j$,
$\OO^-_j$ is activated in $\FF_\ell$. Let us assign $x_j$ TRUE if it is $\OO^+_j$ and
FALSE otherwise. This is satisfying assignment since for every $\GG^i$ at least one occurrence gadget attached to it is
activated in $\FF_\ell$. This contradicts the fact that $\Phi$ is unsatisfiable.

\proofend

\section{Technical properties of $d$-collapsing}
\label{AppTechnical}

In this section, we prove several auxiliary lemmas on $d$-collapsibility used
throughout the paper. 

\subsection{$d$-collapsing faces of dimension strictly less than $d-1$}

\begin{lemma}
\label{LemLessFaces}
Let $\KK$ be a complex, $d$ an integer, and $\sigma$ a $d$-collapsible face (in
particular, $\dim \sigma \leq d -1$). 
Let $\sigma' \supseteq \sigma$ be a face of $\KK$ of dimension at most $d-1$. Then
$\sigma'$ is $d$-collapsible and $\KK_{\sigma'} \collapseto \KK_\sigma$.
\end{lemma}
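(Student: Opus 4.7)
The plan is by induction on $k = |\sigma' \setminus \sigma|$. Before starting, observe that any maximal face of $\KK$ containing $\sigma'$ automatically contains $\sigma$ and therefore equals $\tau(\sigma)$. So $\tau(\sigma') = \tau(\sigma)$, and combined with $\dim \sigma' \le d-1$ this already gives $d$-collapsibility of $\sigma'$ in all cases. The base case $k=0$ of the collapsing is trivial, since then $\sigma' = \sigma$ and $\KK_{\sigma'} = \KK_\sigma$.

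For the inductive step, I would pick any $v \in \sigma' \setminus \sigma$ and set $\eta = \sigma' \setminus \set{v}$. Then $\sigma \subseteq \eta \subsetneq \sigma' \subseteq \tau(\sigma)$ and $\dim \eta \le d-2$, so the induction hypothesis applied to the pair $(\sigma, \eta)$ yields that $\eta$ is $d$-collapsible in $\KK$ and that $\KK_\eta \collapseto \KK_\sigma$. It therefore suffices to exhibit a single elementary $d$-collapse $\KK_{\sigma'} \to \KK_\eta$.

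To produce this collapse, I would first identify $\tau(\sigma) \setminus \set{v}$ as the unique maximal face of $\KK_{\sigma'}$ containing $\eta$: every face of $\KK$ above $\eta$ lies inside $\tau(\sigma)$ as in the opening observation, and from $\KK$ we have removed exactly those such faces that also contain $v$. Hence $\eta$ is $d$-collapsible in $\KK_{\sigma'}$. The interval identity
$$
[\sigma', \tau(\sigma)] \;\cup\; [\eta, \tau(\sigma) \setminus \set{v}] \;=\; [\eta, \tau(\sigma)]
$$
holds because every face $\mu$ between $\eta$ and $\tau(\sigma)$ either contains $v$, forcing $\sigma' = \eta \cup \set{v} \subseteq \mu$, or it does not, in which case $\mu \subseteq \tau(\sigma)\setminus\set{v}$. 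This identity gives $(\KK_{\sigma'})_\eta = \KK_\eta$, which together with $\KK_\eta \collapseto \KK_\sigma$ completes the argument.

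The only part requiring care is the interval identity and the identification of the new unique maximal superface of $\eta$ inside $\KK_{\sigma'}$ (which, unlike $\tau(\eta)=\tau(\sigma)$ inside $\KK$, has lost the vertex $v$). Both are purely combinatorial bookkeeping on the Boolean interval $[\sigma, \tau(\sigma)]$, so I do not anticipate any deeper obstacle.
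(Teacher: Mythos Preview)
Your proof is correct. Both you and the paper set up the same induction on $|\sigma'\setminus\sigma|$, first observing that $\tau(\sigma')=\tau(\sigma)$ and then reducing to the statement $\KK_{\sigma'}\collapseto\KK_{\sigma'\setminus\{v\}}$ for a single vertex $v\in\sigma'\setminus\sigma$. The difference lies in how that step is carried out. You collapse $\eta=\sigma'\setminus\{v\}$ itself inside $\KK_{\sigma'}$, noting that its unique maximal superface there is $\tau(\sigma)\setminus\{v\}$, and the interval identity $[\sigma',\tau(\sigma)]\cup[\eta,\tau(\sigma)\setminus\{v\}]=[\eta,\tau(\sigma)]$ lands you on $\KK_\eta$ in a single elementary $d$-collapse. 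The paper instead (after renaming so that $\eta=\sigma$) lists the remaining vertices $v_2,\dots,v_t$ of $\tau(\sigma)\setminus\sigma'$ and successively collapses the faces $\sigma\cup\{v_i\}$, finishing by removing the now-isolated $\sigma$; this costs $t$ elementary collapses rather than one. Your route is shorter and the bookkeeping is cleaner; the paper's route has the minor advantage that every collapsed face has the same dimension $\dim\sigma+1$, which is irrelevant here but occasionally matters in related arguments.
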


\begin{proof}
We assume that $\sigma \neq \sigma'$ otherwise the proof is trivial.

First, we observe that $\tau(\sigma)$ is a unique maximal face containing
$\sigma'$. Indeed, $\sigma' \subseteq \tau(\sigma)$ since $\tau(\sigma)$ is the
unique maximal face containing $\sigma$, and also if $\eta \supseteq \sigma'$,
then $\eta \supseteq \sigma$, which implies $\eta \subseteq \tau(\sigma)$. Hence
we have that $\sigma'$ is $d$-collapsible.

Let $v_1$ be a vertex of $\sigma' \setminus \sigma$. It is sufficient to prove
that $\KK_{\sigma'} \collapseto \KK_{\sigma' \setminus \set{v_1}}$ and proceed by
induction. Thus, for simplicity of notation, we can assume that $\sigma' =
\sigma \cup \set{v_1}$.

Let $v_2, \dots, v_t$ be vertices of $\tau(\sigma) \setminus \sigma'$. By
$\eta_i$ we denote the face $\sigma \cup \set{v_i}$ for $i \in [t]$. (In
particular, $\sigma' = \eta_1$.) For $i \in [t+1]$ we define a complex $\KK_i$ 
by the formula
$$
\KK_i = \setcond{\eta \in \KK}{\eta \not \supseteq \eta_1, \dots, \eta \not
\supseteq \eta_{i-1}}
=
\setcond{\eta \in \KK}{\hbox{if $\eta \supseteq \sigma$ then $v_j \not \in
\eta$ for $j < i$}}.
$$
From these descriptions we have that $\eta_i$ is a $d$-collapsible face of
$\KK_i$ contained in a unique maximal face $\tau_i = \tau(\sigma) \setminus
\set{v_1, \dots, v_{i-1}}$. Moreover $(\KK_i)_{\eta_i} = \KK_{i+1}$. Thus, we
have a $d$-collapsing
$$
\KK = \KK_1 \rightarrow \KK_2 \rightarrow \dots \rightarrow \KK_{t+1}.
$$
See Figure~\ref{FigLowFaces} for an example.

To finish the proof it remains to observe that $\KK_2 = \KK_{\sigma'}$ and
$\KK_{t+1}$ is a disjoint union of $\KK_\sigma$ and $\set{\sigma}$, hence $\KK_{t+1} \rightarrow \KK_{\sigma}$.
\end{proof}

\begin{figure}
\begin{center}
\epsfbox{complexdcollf10.eps}
\caption{An example of 2-collapsing $\KK \rightarrow \KK_{\sigma'} \collapseto
\KK_{\sigma}$.}
\label{FigLowFaces}
\end{center}
\end{figure}

As a corollary, we obtain the following lemma.

\begin{lemma}
\label{LemBigFacesFirst}
Suppose that $\KK$ is a $d$-collapsible complex. Then there is a $d$-collapsing
of $\KK$ such that first only $(d-1)$-dimensional faces are collapsed and after
that faces of dimensions less then $(d-1)$ are removed.
\end{lemma}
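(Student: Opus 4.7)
My plan is a two-stage transformation of an arbitrary $d$-collapsing of $\KK$: first I would \emph{normalize} so that every low-dimensional collapse has a low-dimensional maximal superface, then I would \emph{sort} by adjacent swaps to push all $(d-1)$-dimensional collapses in front of every lower-dimensional one.

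For normalization, I would induct on the number of faces of $\KK$. Given any $d$-collapsing $\sigma_1, \sigma_2, \ldots, \sigma_m$ of $\KK$, consider the first collapse $\sigma_1$. If $\dim \sigma_1 = d-1$, or if both $\dim \sigma_1 < d-1$ and $\dim \tau(\sigma_1) < d-1$, I keep $\sigma_1$ and apply the induction hypothesis to the smaller complex $\KK_{\sigma_1}$. Otherwise $\dim \sigma_1 < d-1 \leq \dim \tau(\sigma_1)$, so I choose any $(d-1)$-face $\sigma'_1$ with $\sigma_1 \subseteq \sigma'_1 \subseteq \tau(\sigma_1)$; Lemma~\ref{LemLessFaces} gives that $\sigma'_1$ is $d$-collapsible in $\KK$ and $\KK_{\sigma'_1} \collapseto \KK_{\sigma_1}$, hence $\KK_{\sigma'_1}$ is $d$-collapsible with strictly fewer faces, and I apply induction after prepending the $(d-1)$-face $\sigma'_1$.

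For sorting, suppose the normalized collapsing has some index $i$ with $\dim \sigma_{i-1} < d-1$ and $\dim \sigma_i = d-1$; I would swap these two steps. The key observation is that $\sigma_i \not\subseteq \tau(\sigma_{i-1})$ (since $\dim \sigma_i = d-1 > \dim \tau(\sigma_{i-1})$), so the intervals $[\sigma_{i-1}, \tau(\sigma_{i-1})]$ and $[\sigma_i, \tau(\sigma_i)]$ are disjoint in $\KK_{i-1}$. From this one checks directly that $\sigma_i$ is $d$-collapsible already in $\KK_{i-1}$ with the same maximal superface $\tau(\sigma_i)$, that $\sigma_{i-1}$ remains $d$-collapsible in $(\KK_{i-1})_{\sigma_i}$ with maximal superface $\tau(\sigma_{i-1})$, and that $((\KK_{i-1})_{\sigma_i})_{\sigma_{i-1}} = \KK_{i+1}$. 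A bubble-sort iteration of these swaps eventually places every $(d-1)$-dimensional collapse before every lower-dimensional one. The main obstacle is the normalization subcase with $\dim \tau(\sigma_1) < d-1$, in which no $(d-1)$-dimensional promotion exists and we simply keep $\sigma_1$; this is harmless because the subsequent sorting step, powered by the dimensional gap that forces disjointness of intervals, pushes that low-dimensional collapse behind every $(d-1)$-dimensional one.
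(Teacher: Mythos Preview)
Your proof is correct and follows essentially the same two-stage strategy as the paper: first use Lemma~\ref{LemLessFaces} to normalize the collapsing so that no low-dimensional collapse hides a large $\tau$, then bubble-sort the low-dimensional collapses past the $(d-1)$-dimensional ones. The only cosmetic differences are that the paper normalizes to the slightly stronger condition ``each collapsed face is $(d-1)$-dimensional or maximal'' (rather than your ``$(d-1)$-dimensional or has $\dim\tau<d-1$'') and proves termination of the normalization by counting elementary collapses rather than by induction on the number of faces; your weaker normalized form is still sufficient for the swap argument, and you spell out the interval-disjointness justification for the swap more explicitly than the paper does.
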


\begin{proof}
Suppose that we are given a $d$-collapsing of $\KK$. Suppose that in some step we
$d$-collapse a face $\sigma$ that is not maximal and its dimension is less than
$d - 1$. Let us denote this step by $\KK' \rightarrow \KK'_{\sigma}$. Let
$\sigma' \supseteq \sigma$ be such a face of $\KK'$ that either $\dim \sigma' = d - 1$ or $\sigma'$ is a maximal face. Then we
replace this step by $d$-collapsing $\KK' \rightarrow \KK'_{\sigma'}
\collapseto \KK_\sigma$.

We repeat this procedure until every $d$-collapsed face is either of dimension
$d-1$ or maximal. We observe that this procedure can be repeated only finitely
many times since in every
replacement we increase the number of elementary $d$-collapses in the
$d$-collapsing, while this
number is bounded by the number of faces of $\KK$.

Finally, we observe that if we first remove a maximal face of dimension less
than $d-1$ and then we $d$-collapse a $(d-1)$-dimensional face, we can swap
these steps with the same result.
\end{proof}

\subsection{$d$-collapsing to a subcomplex}

Suppose that $\KK$ is a simplicial complex, $\KK'$ is a subcomplex of it, which
$d$-collapses to a subcomplex $\LL'$. If certain conditions are satisfied, then
we can perform $d$-collapsing $\KK' \collapseto \LL'$ in whole $\KK$; see
Figure~\ref{FigSub} for an illustration. The precise statement is given in the
following lemma.

\begin{lemma}[$d$-collapsing a subcomplex]
\label{LemSub}
Let $\KK$ be a simplicial complex, $\KK'$ a subcomplex of $\KK$, and $\LL'$ a
subcomplex of $\KK'$. Assume that if $\sigma \in \KK' \setminus \LL'$,
$\eta \in \KK$, and $\eta \supseteq \sigma$, then $\eta \in \KK' \setminus \LL'$. Moreover
assume that $\KK' \collapseto \LL'$. Then $\LL = (\KK \setminus \KK') \cup \LL'$
is a simplicial complex and $\KK \collapseto \LL$. 
\end{lemma}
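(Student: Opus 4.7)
The plan is to lift the given $d$-collapsing $\KK' = \KK'_1 \rightarrow \KK'_2 \rightarrow \cdots \rightarrow \KK'_m = \LL'$ face by face to a $d$-collapsing of $\KK$. Denote by $\sigma_i$ the face collapsed at the $i$-th step of the given $d$-collapsing, with corresponding unique maximal superface $\tau_i = \tau(\sigma_i)$ in $\KK'_i$, so that $\KK'_{i+1} = \KK'_i \setminus [\sigma_i,\tau_i]$. I will define $\KK_1 = \KK$ and $\KK_{i+1} = \KK_i \setminus [\sigma_i, \tau_i]$, and show (a) that each $\sigma_i$ is a $d$-collapsible face of $\KK_i$ with the same unique maximal superface $\tau_i$, and (b) that $\KK_{m+1} = \LL$.

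First I would check that $\LL = (\KK \setminus \KK') \cup \LL'$ is a simplicial complex. Take $\eta \in \LL$ and $\eta' \subseteq \eta$; clearly $\eta' \in \KK$, so it suffices to exclude $\eta' \in \KK' \setminus \LL'$. If this happened, the hypothesis applied to $\sigma = \eta'$ and its superface $\eta$ would force $\eta \in \KK' \setminus \LL'$, contradicting $\eta \in \LL$.

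The key step is the induction showing $\sigma_i$ is still $d$-collapsible in $\KK_i$ with the same $\tau_i$. Since $\sigma_i \in \KK'_i \setminus \LL' \subseteq \KK' \setminus \LL'$, the hypothesis says every $\eta \in \KK$ with $\eta \supseteq \sigma_i$ lies in $\KK' \setminus \LL'$; in particular no superface of $\sigma_i$ in $\KK$ was ever in $\KK \setminus \KK'$, so such an $\eta$ has not been touched by the ``new'' collapses beyond what was done in $\KK'$. Concretely, by induction $\KK_i \cap \KK' = \KK'_i$ and $\KK_i \setminus \KK' = \KK \setminus \KK'$, so $\St(\sigma_i, \KK_i) = \St(\sigma_i, \KK'_i)$. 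Hence $\tau_i$ remains the unique maximal face of $\KK_i$ containing $\sigma_i$, and $\dim \sigma_i \leq d - 1$ is inherited from the original collapsing, so the elementary $d$-collapse $\KK_i \rightarrow \KK_{i+1}$ is legal, and $[\sigma_i, \tau_i]$ computed in $\KK_i$ equals the one computed in $\KK'_i$. This also maintains the invariants $\KK_{i+1} \cap \KK' = \KK'_{i+1}$ and $\KK_{i+1} \setminus \KK' = \KK \setminus \KK'$.

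Finally, applying the invariants at $i = m+1$ gives $\KK_{m+1} \cap \KK' = \LL'$ and $\KK_{m+1} \setminus \KK' = \KK \setminus \KK'$, so $\KK_{m+1} = (\KK \setminus \KK') \cup \LL' = \LL$, completing the $d$-collapsing $\KK \collapseto \LL$. The only real obstacle is the bookkeeping needed to verify the star equality $\St(\sigma_i, \KK_i) = \St(\sigma_i, \KK'_i)$; once the hypothesis is used to confine all superfaces of every $\sigma_i$ inside $\KK' \setminus \LL'$, the rest is routine induction.
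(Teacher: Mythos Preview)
Your proof is correct and essentially identical to the paper's: both lift the collapsing of $\KK'$ to $\KK$ one elementary step at a time, using the hypothesis to confine every superface of the collapsed $\sigma_i$ to $\KK'\setminus\LL'$, so that the star (and hence the unique maximal face $\tau_i$) is the same in $\KK_i$ as in $\KK'_i$; the paper packages the induction as ``the hypothesis is preserved after one collapse'' while you track the equivalent invariants $\KK_i\cap\KK'=\KK'_i$ and $\KK_i\setminus\KK'=\KK\setminus\KK'$. (One harmless indexing slip: your sequence has $m$ complexes and $m-1$ collapses, so the final invariant should be applied at $i=m$, not $i=m+1$.)
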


\begin{figure}
\begin{center}
\epsfbox{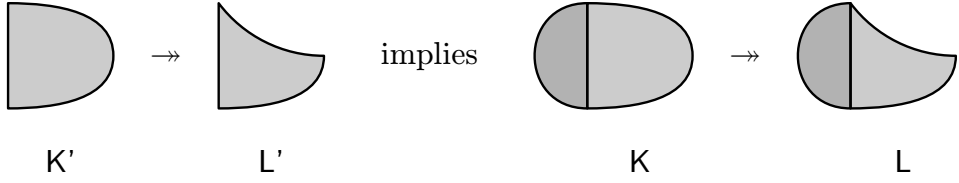}
\caption{Complexes $\KK$, $\KK'$, $\LL$ and $\LL'$ from the statement of
Lemma~\ref{LemSub}.}
\label{FigSub}
\end{center}
\end{figure}

\begin{proof}
It is straightforward to check that $\LL$ is a simplicial complex using the
equivalence\\
\centerline{$\eta \in \LL$ if and only if $\eta \in \KK$ and $\eta \notin
\KK' \setminus \LL'$.}\\[2mm]

In order to show $\KK \collapseto \LL$, it is sufficient to show the following
(and proceed by induction over elementary $d$-collapses):\\[2mm]
\emph{
Suppose that $\sigma'$ is a $d$-collapsible face of $\KK'$ such that
$\KK'_{\sigma'} \supseteq \LL'$. Then we have
\begin{itemize}
\item[$1.$] $\sigma'$ is a $d$-collapsible face of $\KK$.
\item[$2.$] If $\sigma \in \KK'_{\sigma'} \setminus \LL'$, $\eta \in
\KK_{\sigma'}$ and $\eta \supseteq \sigma$, then $\eta \in \KK'_{\sigma'}
\setminus \LL'$. 
\item[$3.$] $\LL = (\KK_{\sigma'} \setminus \KK'_{\sigma'})
\cup \LL'$.
\end{itemize}
}

We prove the claims separately:   

\begin{enumerate}
\item
We know that
$\sigma' \notin \LL'$ since $\KK'_{\sigma'} \supseteq \LL'$. Thus, $\sigma' \in
\KK' \setminus \LL'$. If $\eta' \in \KK$ and $\eta' \supseteq \sigma'$, then,
by the assumption of the lemma, $\eta' \in \KK' \setminus \LL' \subseteq \KK'$.
In~particular, the maximal faces in $\KK'$ containing $\sigma'$ coincide with
the maximal faces in $\KK$ containing $\sigma'$. It means that $\sigma'$ is a
$d$-collapsible face of $\KK$.

\item
We have $\KK'_{\sigma'} \setminus \LL' \subseteq  \KK' \setminus \LL'$ and
$\KK_{\sigma'} \subseteq \KK$. Thus the assumption of the lemma implies that
$\eta \in \KK' \setminus \LL'$. Next we have $\KK_{\sigma'} \cap \KK' =
\KK'_{\sigma'}$ since the maximal faces in $\KK'$ containing $\sigma'$ coincide
with the maximal faces in $\KK$ containing $\sigma'$. We conclude that $\eta
\in \KK'_{\sigma'} \setminus \LL'$.

\item
One can check that $\KK \setminus \KK' = \KK_{\sigma'} \setminus
\KK'_{\sigma'}$.
\end{enumerate}
\end{proof}

Suppose that $\Ff$ is a set system. For an integer $k$ we define the graph
$G_k(\Ff) = (V(G_k),E(G_k))$ as follows:\\[2mm]
\begin{tabular}{lcl}
$V(G_k)$ &=& $\setcond{F \in \Ff}{|F| = k + 1 \hbox{ (i.e., $\dim F = k$ if
$F$ is regarded as a face)}}$;\cr
$E(G_k)$ &=& $\setcond{\set{F, F'}}{F, F' \in V(G_k), \hbox{$F \cap F' \in
\Ff$ and $|F \cap F'| = k$}}$.\cr
\end{tabular} \\[2mm]

\begin{lemma}[$d$-collapsing a $d$-dimensional complex]
\label{LemGra}
Suppose that $\KK$ is a $d$-dimensional complex, $\LL$ is its subcomplex and
the following conditions are satisfied:
\begin{itemize}
\item $\KK \setminus \LL$ contains a $d$-collapsible face $\sigma$ such that
$\tau(\sigma) \in \KK \setminus \LL$;
\item $G_d(\KK \setminus \LL)$ is connected;
\item for every $(d-1)$-face $\eta \in \KK \setminus \LL$ there are at most two
$d$-faces in $\KK \setminus \LL$ containing $\eta$.  
\end{itemize}
Then $\KK \collapseto \LL$.
\end{lemma}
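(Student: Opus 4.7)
The plan is to perform the $d$-collapses in two phases: a BFS phase that removes all $d$-faces of $\KK\setminus\LL$ (together with a chosen set of $(d-1)$-faces that witness the collapses), followed by a clean-up phase that removes the remaining lower-dimensional faces of $\KK\setminus\LL$.

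First I apply Lemma~\ref{LemLessFaces} to replace $\sigma$ by a face of dimension exactly $d-1$ contained in $\tau(\sigma)$, so I may assume $\dim\sigma=d-1$. The main case is $\tau(\sigma)$ a $d$-face (if $\tau(\sigma)=\sigma$, a maximal $(d-1)$-face, the BFS step below is vacuous and the argument reduces to the clean-up phase applied to $\KK_\sigma$). By condition~(ii) the graph $G_d(\KK\setminus\LL)$ is connected, so I may fix a BFS order $\tau_1,\tau_2,\ldots,\tau_k$ of its vertices rooted at $\tau_1:=\tau(\sigma)$. For every $i\geq 2$, the BFS parent $\tau_{p(i)}$ shares a $(d-1)$-face $\eta_i\in\KK\setminus\LL$ with $\tau_i$; fix one such $\eta_i$.

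I then collapse in the following order: first $\sigma$, then $\eta_2,\eta_3,\ldots,\eta_k$. Collapsing $\sigma$ removes the interval $\{\sigma,\tau_1\}$. Inductively, at the moment I want to collapse $\eta_i$, the $d$-faces of $\KK\setminus\LL$ that have been removed are exactly $\tau_1,\ldots,\tau_{i-1}$. By condition~(iii) the $d$-faces of $\KK\setminus\LL$ containing $\eta_i$ are at most two, and two of them are $\tau_{p(i)}$ and $\tau_i$, so they are exactly these two. Moreover, no $d$-face of $\LL$ can contain $\eta_i$, since $\LL$ is a subcomplex and $\eta_i\in\KK\setminus\LL$. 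Hence in the current complex $\tau_i$ is the unique face strictly containing $\eta_i$, and the collapse legally removes $\{\eta_i,\tau_i\}$.

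After the BFS phase the current complex has lost all $d$-faces of $\KK\setminus\LL$ as well as the $(d-1)$-faces $\sigma,\eta_2,\ldots,\eta_k$. Every remaining face $\mu$ of $\KK\setminus\LL$ has dimension at most $d-1$, and any strict coface of $\mu$ in the current complex still lies in $\KK\setminus\LL$ (otherwise the downward closure of $\LL$ would force $\mu\in\LL$). A face $\mu$ of $\KK\setminus\LL$ that is maximal among the remaining faces of $\KK\setminus\LL$ is therefore maximal in the whole current complex, so it is $d$-collapsible with $\tau(\mu)=\mu$, and collapsing it removes precisely $\mu$. Iterating (for instance in decreasing order of dimension) finishes the $d$-collapse to $\LL$. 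The main obstacle is verifying the BFS-phase step: one must ensure that when $\eta_i$ is about to be collapsed, it really has $\tau_i$ as its unique strict coface in the current complex. This is a combination of condition~(iii) (which pins down the two $d$-cofaces of $\eta_i$ in $\KK\setminus\LL$), the BFS ordering (which guarantees $\tau_{p(i)}$ is already gone while $\tau_i$ is not), and the downward-closure of $\LL$ (which rules out any coface of $\eta_i$ inside $\LL$).
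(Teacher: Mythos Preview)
Your proof is correct and follows essentially the same approach as the paper's: order the $d$-faces of $\KK\setminus\LL$ by a spanning-tree traversal of $G_d(\KK\setminus\LL)$ rooted at $\tau(\sigma)$, collapse each one via the $(d-1)$-face it shares with its predecessor, and then remove the remaining low-dimensional faces of $\KK\setminus\LL$ as maximal faces. The only cosmetic differences are that you invoke Lemma~\ref{LemLessFaces} to assume $\dim\sigma=d-1$ (the paper simply performs the first collapse $\KK\to\KK_\sigma$ directly), and you phrase the ordering as BFS rather than as ``an order in which every vertex has an earlier neighbor''; neither changes the substance.
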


\begin{figure}
\begin{center}
\epsfbox{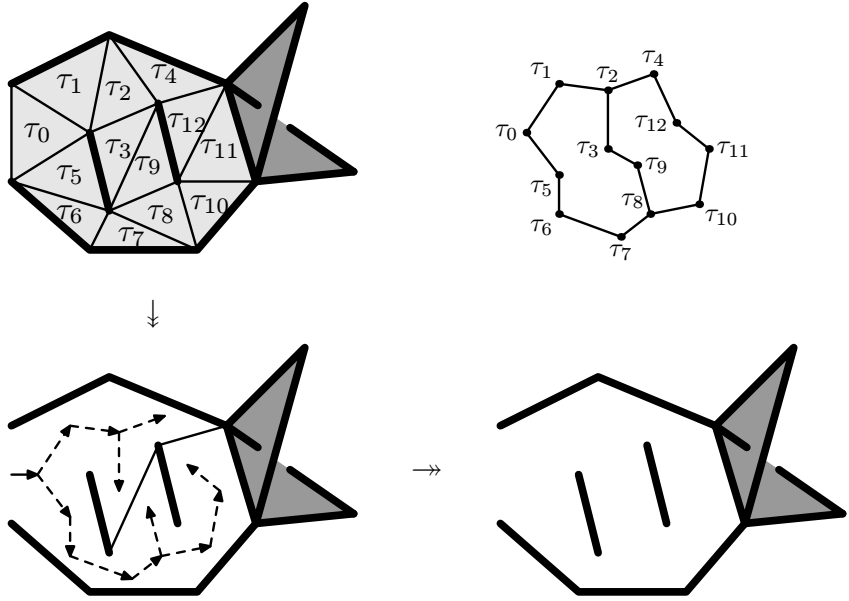}
\caption{In top right picture there are complexes $\KK$ and $\LL$ from
Lemma~\ref{LemGra}; $\LL$ is thick and dark. In top left picture there is the
graph $G_{2}(\KK \setminus \LL)$. Collapsing $\KK \collapseto \LL$ is in bottom
pictures.}
\label{FigConGraph}
\end{center}
\end{figure}

\begin{proof}
See Figure~\ref{FigConGraph} when following the proof.
Let $\tau_0 = \tau(\sigma)$, $\tau_1$, \dots, $\tau_j$ be an order of vertices
of $G_d(\KK \setminus \LL)$ such that for every $i \in [j]$ the vertex $\tau_i$
has a neighbor $\tau_{n(i)}$ with $n(i) < i$. Such an order exists by the
second condition. Let $\sigma_i = \tau_i \cap \tau_{n(i)}$. 

Consider the following sequence of elementary $d$-collapses\\[2mm]
\begin{tabular}{lcccl}
$\KK$ & $\rightarrow$ & $\KK_0$ & = & $\KK_{\sigma}$, \cr
$\KK_{i - 1}$ & $\rightarrow$ & $\KK_i$ & = & $(\KK_{i-1})_{\sigma_i}$ for $i
\in [j]$.
\cr
\end{tabular}\\[2mm]
This sequence is indeed a sequence of elementary $d$-collapses since
$\tau_{n(i)} \notin \KK_{i-1}$, thus $\tau_i$ is a unique maximal face
containing $\sigma_i$ in $\KK_{i-1}$ by the third condition. Moreover,
$\sigma_i \in \KK \setminus \LL$. Thus, $\KK_j$ is a supercomplex of $\LL$.
 
The set system $\KK_j \setminus \LL$ contains only faces of dimensions $d-1$ or
less. Hence $\KK_j \collapseto \LL$ by removing faces, which establishes the claim.
\end{proof}

\subsection{Gluing distant faces}
\label{SubGlu}

Let $k$ be an integer. Suppose that $\KK$ is a simplicial complex and let $\omega =
\set{u_1, \dots,
u_{k+1}}$, $\eta = \set{v_1, \dots, v_{k+1}}$ be two $k$-faces of $\KK$. 
By 
$$ \KK(\omega = \eta)$$
 we mean the resulting complex under the identification $u_1 = v_1, \dots,
u_{k+1} = v_{k+1}$ (note that this complex is not unique---it depends on the
order of vertices in $\omega$ and $\eta$; however, the order of vertices is not important for our purposes).

In a similar spirit, we define
$$ \KK(\omega_1 = \eta_1, \dots, \omega_t = \eta_t)$$
for $k$-faces $\omega_1, \dots, \omega_t, \eta_1, \dots, \eta_t$.


\begin{lemma}[Collapsing glued complex]
\label{LemColGlu}
Suppose that $\omega$ and $\eta$ are two distant faces in a simplicial complex
$\KK$. Let $\LL$ be a subcomplex of $\KK$ such that $\omega, \eta \in \LL$.
Suppose that $\KK$ $d$-collapses to $\LL$. Then $\KK(\omega = \eta)$
$d$-collapses to $\LL(\omega = \eta)$.

\end{lemma}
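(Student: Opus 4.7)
The plan is to take the given $d$-collapsing $\KK = \KK_0 \rightarrow \KK_1 \rightarrow \cdots \rightarrow \KK_r = \LL$ with $\KK_{k+1} = (\KK_k)_{\sigma_k}$, and show that the image under the identification $q\colon \KK \to \KK(\omega = \eta)$ gives a valid $d$-collapsing $\bar\KK = \bar\KK_0 \rightarrow \bar\KK_1 \rightarrow \cdots \rightarrow \bar\KK_r = \bar\LL$, using the same sequence of faces $\bar\sigma_k = q(\sigma_k)$. Here and below $\bar X$ stands for $q(X)$; in particular $\bar\KK = \KK(\omega = \eta)$ and $\bar\LL = \LL(\omega = \eta)$. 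For each $k$, I have to verify that $\bar\sigma_k$ is $d$-collapsible in $\bar\KK_k$ with unique maximal face $\bar\tau_k$, and that $\bar\KK_{k+1} = (\bar\KK_k)_{\bar\sigma_k}$.

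Two simple consequences of distantness drive the whole argument. First, no face of $\KK$ can contain both a vertex of $\omega$ and a vertex of $\eta$, since two vertices in a common face are joined by an edge in the $1$-skeleton and so at distance $1$, whereas distantness demands distance at least $3$. In particular, the identifications $u_i = v_i$ never merge two vertices of the same face of $\KK$, so $\bar\KK$ is a bona-fide simplicial complex and $\dim \bar\sigma = \dim \sigma$ for every face $\sigma$. Second, since $\omega, \eta \in \LL$ survive the entire collapsing, $\omega, \eta \notin [\sigma_k, \tau_k]$ for every $k$; because $\tau_k$ is the unique maximal face containing $\sigma_k$, the inclusion $\sigma_k \subseteq \omega$ would force $\omega \subseteq \tau_k$ and hence $\omega \in [\sigma_k, \tau_k]$, a contradiction. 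Thus $\sigma_k \not\subseteq \omega$ and, symmetrically, $\sigma_k \not\subseteq \eta$; combined with the first fact, $\sigma_k$ always contains some \emph{free} vertex $w \notin \omega \cup \eta$.

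The key claim is: every $\mu \in \KK_k$ with $\bar\sigma_k \subseteq \bar\mu$ already satisfies $\sigma_k \subseteq \mu$. Indeed, every free vertex of $\sigma_k$ lies in $\mu$ automatically, since it is not touched by $q$. For $u \in \sigma_k \cap \omega$, the condition $q(u) \in \bar\mu$ leaves two possibilities: either $u \in \mu$ or the twin $u' \in \eta$ lies in $\mu$. But $\sigma_k$ contains a free vertex $w$ forming an edge with $u$ inside $\sigma_k$, so $\dist(u, w) = 1$, whence $\dist(u', w) \geq 2$ by distantness; thus $\{u', w\}$ is not a face of $\KK$ and cannot lie in $\mu$, so $u \in \mu$. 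The case $u \in \sigma_k \cap \eta$ is symmetric. It follows that $\mu \supseteq \sigma_k$, hence $\mu \subseteq \tau_k$, and therefore $\bar\mu \subseteq \bar\tau_k$. This shows $\bar\tau_k$ is the unique maximal face of $\bar\KK_k$ containing $\bar\sigma_k$, so $\bar\sigma_k$ is $d$-collapsible in $\bar\KK_k$.

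With the key claim in hand, the equality $\bar\KK_{k+1} = \bar\KK_k \setminus [\bar\sigma_k, \bar\tau_k]$ follows by two easy inclusions. If some $\bar\mu \in \bar\KK_{k+1}$ lay in $[\bar\sigma_k, \bar\tau_k]$, any lift $\mu \in \KK_{k+1}$ would satisfy $\mu \supseteq \sigma_k$ by the key claim, hence $\mu \in [\sigma_k, \tau_k]$, contradicting $\mu \in \KK_{k+1}$. Conversely, if $\bar\mu \in \bar\KK_k \setminus [\bar\sigma_k, \bar\tau_k]$, no lift $\mu \in \KK_k$ can lie in $[\sigma_k, \tau_k]$ (else $\bar\mu$ would lie in $[\bar\sigma_k, \bar\tau_k]$), so $\mu \in \KK_{k+1}$ and $\bar\mu \in \bar\KK_{k+1}$. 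The main obstacle throughout is the key claim itself: the lift of $\bar\mu$ we pick could a priori be a ``twin-swapped'' variant of $\sigma_k$ rather than contain $\sigma_k$ outright, and distantness is exactly what rules this out whenever $\sigma_k$ also contains a free vertex---which is precisely what $\omega, \eta \in \LL$ guarantees.
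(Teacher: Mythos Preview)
Your proof is correct and follows essentially the same route as the paper's: push the given collapsing sequence through the quotient map and verify at each step that the image of $\sigma_k$ is $d$-collapsible with unique maximal face $\bar\tau_k$. Your argument is more explicit than the paper's---you spell out why $\sigma_k$ must contain a free vertex (using $\omega,\eta\in\LL$) and why this free vertex blocks the ``twin-swapped'' lifts, and you also verify the equality $\bar\KK_{k+1}=(\bar\KK_k)_{\bar\sigma_k}$, which the paper leaves implicit---but the underlying idea is identical.
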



\begin{proof}
Let $\KK \rightarrow \KK_2 \rightarrow \KK_3 \rightarrow \cdots \rightarrow \LL$ be a
$d$-collapsing of $\KK$ to $\LL$. Our task is to show that 
$$\KK(\omega = \eta) \rightarrow \KK_2(\omega = \eta) \rightarrow
\KK_3(\omega = \eta) \rightarrow \cdots \rightarrow \LL(\omega =
\eta)$$ is a $d$-collapsing of $\KK(\omega \simeq \eta)$ to $\LL(\omega
\simeq \eta)$. 

It is sufficient to show $\KK(\omega = \eta) \rightarrow \KK_2(\omega =
\eta)$ and proceed by induction. 

For purposes of this proof, we distinguish faces before gluing $\omega = \eta$
by Greek letters, say $\sigma, \sigma'$, and after gluing by Greek letters in
brackets, say $[\sigma], [\sigma']$. E.g., we have $\omega \neq \eta$, but
$[\omega] = [\eta]$.

Suppose that $\KK_2 = \KK_{\sigma}$ for a
$d$-collapsible face $\sigma$. We want to show that $[\tau(\sigma)]$ is the
unique maximal face containing $[\sigma]$.
By the distance condition, we can without loss
of generality assume that $\sigma \cap \eta = \emptyset$ (otherwise we swap
$\omega$ and $\eta$). Suppose $[\sigma'
] \supseteq  [\sigma]$. Now we show that $\sigma' \supseteq \sigma$: if $\sigma
\cap \omega = \emptyset$ then $[\sigma] = \sigma$, and hence $\sigma' \subseteq
\sigma$ since the vertices of $\sigma$ are not glued to another vertices); if $\sigma \cap \omega \neq \emptyset$ then
$\sigma' \cap \eta = \emptyset$ due to the distance condition, which implies $\sigma'
\supseteq \sigma$.
Hence $\tau(\sigma) \supseteq \sigma'$, and
$[\tau(\sigma)] \supseteq [\sigma']$. Thus $[\tau(\sigma)]$ is the unique
maximal face containing $[\sigma]$.
\end{proof}

\begin{lemma}[Collapsing of the connecting gadget]
\label{LemCon}
Let $t$ be an integer. Let $\LLp$ be a complex with distinct $d$-dimensional
faces $\sigma$, $\gamma_1, \dots, \gamma_t$ such that $\sigma$ is a
maximal face of $\LLp$. Let $\CC = \CC(\rho, \zeta_1, \dots,
\zeta_t)$ and $\CC' = \CC'(\rho, \zeta_1, \dots, \zeta_t)$ be complexes defined in Section~\ref{SecBad}.

Then the complex 
$(\LLp \dot\cup \CC)(\sigma = \rho, \zeta_1 = \gamma_1, \dots, \zeta_t
= \gamma_t) 
$
$d$-collapses to the complex
$
(\LLp \dot\cup \CC')(\sigma = \rho, \zeta_1 = \varphi_1, \dots,
\zeta_t = \gamma_t) \setminus \set{\sigma}.
$

\end{lemma}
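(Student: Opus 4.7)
The plan is to lift the $d$-collapsing $\CC \collapseto \CC' \setminus \set{\rho}$ from Proposition~\ref{ProProMd}(ii) to the glued complex $\KK_g := (\LLp \dot\cup \CC)(\sigma = \rho, \zeta_1 = \gamma_1, \dots, \zeta_t = \gamma_t)$, by performing the same elementary collapses inside $\KK_g$. Write the given collapsing as $\CC = \CC_1 \rightarrow \CC_2 \rightarrow \cdots \rightarrow \CC_N = \CC' \setminus \set{\rho}$ with collapsed faces $\sigma_i$ and maximal superfaces $\tau_i$. Proposition~\ref{ProProMd}(iii) forces $\sigma_1 = \rho$, since $\rho$ is the only $d$-collapsible face of $\CC$; hence the very first lifted step collapses $[\rho] = [\sigma]$ and removes the face $[\sigma]$ from $\KK_g$, which is precisely what produces the ``$\setminus \set{\sigma}$'' in the target.

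For this first step I would verify directly that $[\tau(\rho)]$ is the unique maximal superface of $[\sigma] = [\rho]$ in $\KK_g$. Any superface $\eta$ of $[\sigma]$ in $\KK_g$ pulls back to a face $\hat{\eta}$ of the disjoint union $\LLp \dot\cup \CC$, which lies entirely on one side. A pull-back on the $\LLp$-side is a superface of $\sigma$ and must equal $\sigma$ itself by maximality of $\sigma$ in $\LLp$; a pull-back on the $\CC$-side is a superface of $\rho$ and is contained in $\tau(\rho)$ by Proposition~\ref{ProProMd}. Hence $[\tau(\rho)]$ is the unique maximum.

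The main obstacle is the inductive step. For $i \geq 2$, the collapsed face $\sigma_i$ cannot lie in $\CC'$ (otherwise it would be retained rather than removed by the collapsing), so it is not a subface of $\rho$ nor of any $\zeta_j$. Moreover, because the vertices of a face are pairwise adjacent, the pairwise distance at least $3$ between the faces $\rho, \zeta_1, \dots, \zeta_t$ from Proposition~\ref{ProProMd}(i) forbids $\sigma_i$ from containing vertices of two distinct members of $\set{\rho, \zeta_1, \dots, \zeta_t}$. Combining these two observations, $\sigma_i$ must possess at least one \emph{interior} vertex $v$, i.e.\ a vertex not belonging to any of $\rho, \zeta_1, \dots, \zeta_t$. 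Since $v$ does not lie in $\LLp$ and its equivalence class under the gluing is the singleton $\set{v}$, any superface of $[\sigma_i]$ in $\KK_g$ must pull back to the $\CC$-side, and therefore coincides with a superface of $\sigma_i$ in $\CC$. This makes $[\sigma_i]$ a $d$-collapsible face of the current intermediate complex of $\KK_g$ with unique maximal superface $[\tau_i]$, matching the behaviour in $\CC_i$. Chaining the lifted collapses produces a $d$-collapsing of $\KK_g$ onto the image of $\LLp \dot\cup (\CC' \setminus \set{\rho})$ under the identifications $\zeta_i = \gamma_i$ with $[\sigma]$ already discarded, which is precisely $(\LLp \dot\cup \CC')(\sigma = \rho, \zeta_i = \gamma_i) \setminus \set{\sigma}$ as required.
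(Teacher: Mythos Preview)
Your approach is correct and follows the same overall strategy as the paper—lift the collapsing $\CC\collapseto\CC'\setminus\{\rho\}$ step by step to the glued complex—but the organisation differs. The paper factors the argument through two general lemmas: it first glues only $\sigma=\rho$ and applies Lemma~\ref{LemSub} (with $\KK'=\CC$, $\LL'=\CC'\setminus\{\sigma\}$) to obtain the collapsing in $(\LLp\dot\cup\CC)(\sigma=\rho)$, and then performs the remaining identifications $\zeta_i=\gamma_i$ one at a time via Lemma~\ref{LemColGlu}, where the distant-faces hypothesis is exactly Proposition~\ref{ProProMd}(i). Your proof instead works in the fully glued complex from the outset, using the interior vertex $v$ of $\sigma_i$ to force pull-backs to the $\CC$-side. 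This is more direct and avoids stating the two auxiliary lemmas, at the cost of a slightly more delicate verification.

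One point you pass over with ``therefore'' deserves a sentence: once the pull-back $\hat\eta$ lies on the $\CC$-side, you still need $\hat\eta\supseteq\sigma_i$ rather than merely $[\hat\eta]\supseteq[\sigma_i]$. Since two distinct $\CC$-vertices can lie in the same equivalence class (namely a vertex of $\rho$ and a vertex of some $\zeta_j$, when the corresponding $\sigma$-vertex also lies in $\gamma_j$), this is not automatic. The fix is short: $\hat\eta$ contains the interior vertex $v$ (its class is a singleton), and if some non-interior vertex $u\in\sigma_i$ lying in a member $M\in\{\rho,\zeta_1,\dots,\zeta_t\}$ were represented in $\hat\eta$ by a vertex $w$ of a different member $M'$, then $v$ would be adjacent both to $M$ (via $u$) and to $M'$ (via $w$), forcing $\dist(M,M')\le 2$, contrary to Proposition~\ref{ProProMd}(i). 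The same argument gives injectivity of $\hat\eta\mapsto[\hat\eta]$ on $\St(\sigma_i,\CC)$, which is what makes the lifted collapses remove exactly the images of $[\sigma_i,\tau_i]$ and keeps the induction running. With this filled in, your argument is complete; the paper's modular route trades this local computation for citing Lemmas~\ref{LemSub} and~\ref{LemColGlu}.
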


\begin{proof}

First, we observe that
$$
(\LLp \dot\cup \CC)(\sigma = \rho) \collapseto (\LLp\dot\cup \CC')(\sigma
= \rho) \setminus \set{\sigma}.
$$
This follows from Lemma~\ref{LemSub} by setting $\KK = (\LLp  \dot \cup
\CC)(\sigma = \rho)$, $\KK' = \CC$, $\LL' = \CC' \setminus \set{\sigma}$, and then
$\LL = (\LLp \dot\cup \CC')(\sigma = \rho) \setminus \set{\sigma}$. Assumptions of the lemma are satisfied by Proposition~\ref{ProProMd}(ii) and the inspection.

Now it is sufficient to iterate Lemma~\ref{LemColGlu}, assumptions are
satisfied by Proposition~\ref{ProProMd}(i).
\end{proof}


\section{The complexity of $d$-re\-pre\-sent\-abi\-lity}
\label{AppRepresentability}
In this section we prove that $d$-REPRESENTABILITY is NP-hard for $d \geq 2$.

\heading{Intersection graphs.}
Let $\Ff$ be a set system. The \emph{intersection graph} $I(\Ff)$ of $\Ff$ is defined
as the (simple) graph such that the set of its vertices is the set $\Ff$ 
and the set of its edges is the set $\setcond{\set{F, F'}}{F, F' \in
\Ff, F \neq F', F \cap F' \neq \emptyset}$. Alternatively, $I(\Ff)$ is
the 1-skeleton of the nerve of $\Ff$. 

A \emph{string graph} is a graph, which is
isomorphic to an intersection graph of finite collection of curves in the plane. By $\STR$ we denote
the set of all string graphs. By $\CON$ we denote the class of intersection
graphs of finite collections of convex sets in the plane, and by $\SEG$ we denote the class of
intersection graphs of finite collections of segments in the plane. Finally, by $\SEG(\leq2)$ we
denote the class of intersection graphs of finite collections of segments in
the plane such that no three segments share a common point.

Suppose that $G$ is a string graph. A system $\mathcal{C}$ of curves in the
plane such that $G$ is isomorphic $I(\mathcal C)$ is called an
\emph{$\STR$-representation} of $G$. Similar definitions apply to another
classes. We also establish a similar definition for simplicial complexes. Suppose that $\KK$ is a
$d$-representable simplicial complex. A system $\mathcal{C}$ of convex sets in
$\er^d$ such that $\KK$ is isomorphic to the nerve of $\mathcal{C}$ is called a
\emph{$d$-representation} of $\KK$.

We have $\STR \supseteq \CON
\supseteq \SEG$ (actually, it is known that the inclusions are strict).
Furthermore, suppose that we are given a graph $G
\in \SEG$. By Kratochv{\'\i}l and Matou\v{s}ek~\cite[Lemma
4.1]{kratochvil-matousek94}, there is a $\SEG$-representation of $G$ such that
no two parallel segments of this representation intersect. By a small
perturbation, we can even assume that no three segments of this representation
share a common point. Hence $\SEG = \SEG(\leq 2)$.

\heading{NP-hardness of 2-representability.}
Kratochv{\'\i}l and Matou\v{s}ek~\cite{kratochvil-matousek89} prove that for
the classes mentioned above (i.e., $\STR$, $\CON$ and $\SEG$) it is
NP-hard to recognize whether a given graph belongs to the given class. For this
they reduce planar 3-connected 3-satisfiability (P3C3SAT) to this problem
(see~\cite{kratochvil94} for the proof of NP-completeness of P3C3SAT and another
background). More precisely (see~\cite[the proof of Prop.
2]{kratochvil-matousek89}), given a formula $\Phi$ of P3C3SAT they construct
a graph $G(\Phi)$ such that $G(\Phi) \in \SEG$ if the formula is satisfiable,
but $G(\Phi) \not \in \STR$ if the formula is unsatisfiable. Moreover, 
we already know that this yields $G(\Phi) \in \SEG(\leq 2) $ for
satisfiable formulae.

Let us consider $G(\Phi)$ as a 1-dimensional simplicial complex. We will
derive that $G(\Phi)$ is 2-representable if and only if $\Phi$ is satisfiable.
 
If we are given a 2-representation of $G(\Phi)$ it is also a
$\CON$-representation of $G(\Phi)$ since $G(\Phi)$ is 1-dimensional. Hence
$G(\Phi)$ is not 2-representable for unsatisfiable formulae.

On the other hand, a $\SEG(\leq 2)$-representation of $G(\Phi)$ is also
a $2$-repre\-sentation of $G(\Phi)$. Thus $G(\Phi)$ is 2-representable for
satisfiable formulae.

In summary, we have that 2-REPRESENTABILITY is NP-hard.

\heading{\boldmath $d$-representability of suspension.} Let $\KK$ be a simplicial complex and let $a$ and $b$ be
two new vertices. By the \emph{suspension} of $\KK$ we mean the simplicial complex
$$
\susp \KK = \KK \cup \setcond{\set a \cup \sigma}{\sigma \in \KK} \cup
\setcond{\set b \cup \sigma}{\sigma \in \KK}.
$$

\begin{lemma}
\label{LemSusp}
Let $d$ be an integer. A simplicial complex $\KK$ is $(d-1)$-representable if
and only if $\susp \KK$ is $d$-representable.
\end{lemma}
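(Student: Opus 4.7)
The plan is to prove both directions by direct geometric constructions, one using an extra coordinate and the other using a separating hyperplane.

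For the forward implication, starting from a $(d-1)$-representation $\{C_i\}$ of $\KK$ in $\er^{d-1}$, I would embed $\er^{d-1}$ as $\er^{d-1} \times \{0\} \subset \er^d$ and define $\tilde C_i = C_i \times [-1,1]$ for each vertex $i$ of $\KK$, together with the two hyperplanes $\tilde C_a = \er^{d-1} \times \{1\}$ and $\tilde C_b = \er^{d-1} \times \{-1\}$ for the new vertices. All of these sets are convex in $\er^d$. Then for any $\sigma \subseteq V(\KK)$ one checks $\bigcap_{i \in \sigma} \tilde C_i = \bigl(\bigcap_{i \in \sigma} C_i\bigr) \times [-1,1]$, which is nonempty iff $\sigma \in \KK$; intersecting further with $\tilde C_a$ or $\tilde C_b$ restricts to the top or bottom slice and remains nonempty under the same condition; finally $\tilde C_a \cap \tilde C_b = \emptyset$. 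This matches exactly the faces of $\susp \KK$.

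For the converse, given a $d$-representation $\{D_v\}$ of $\susp \KK$ in $\er^d$, note that $\{a,b\} \notin \susp \KK$ forces $D_a \cap D_b = \emptyset$. I would apply the hyperplane separation theorem in $\er^d$ to obtain a hyperplane $H$ with $D_a$ in one closed halfspace and $D_b$ in the other, and then define $D_i' = D_i \cap H$ for every vertex $i$ of $\KK$. Each $D_i'$ is convex in $H \cong \er^{d-1}$.

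To verify that $\{D_i'\}$ is a $(d-1)$-representation of $\KK$, I would argue as follows. For $\sigma \in \KK$, the fact that both $\sigma \cup \{a\}$ and $\sigma \cup \{b\}$ are faces of $\susp \KK$ yields points $p \in \bigcap_{i \in \sigma} D_i \cap D_a$ and $q \in \bigcap_{i \in \sigma} D_i \cap D_b$; the segment $[p,q]$ lies in $\bigcap_{i \in \sigma} D_i$ by convexity, and since $p, q$ lie on opposite sides of $H$, this segment meets $H$, providing a point in $\bigcap_{i \in \sigma} D_i'$. Conversely, if $\sigma \notin \KK$ then $\sigma \notin \susp \KK$, so $\bigcap_{i \in \sigma} D_i = \emptyset$, which forces $\bigcap_{i \in \sigma} D_i' = \emptyset$. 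No real obstacle arises; the only minor subtlety to note is that weak (rather than strict) hyperplane separation is enough here, since the convexity of the intersection together with the presence of points on or beyond each side of $H$ already forces the intersection to meet $H$ itself.
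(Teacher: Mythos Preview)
Your proof is correct and follows essentially the same approach as the paper's: a product construction for the forward direction and a separating-hyperplane argument for the converse. You actually supply more detail than the paper does in the reverse direction (the explicit segment argument showing $\bigcap_i D_i$ meets $H$), and your remark that weak separation suffices is a valid refinement of the paper's appeal to compactness.
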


\begin{proof}
First, we suppose that $\KK$ is $(d-1)$-representable and we show that $\susp
\KK$ is $d$-representable. Let $K_1, \dots, K_t \subseteq \er^{d-1}$ be convex
set from a $(d-1)$-repre\-sentation of $\KK$. Let $K(a)$ and $K(b)$ be
hyperplanes $\er^{d-1} \times \set{0}$ and $\er^{d-1} \times \set{1}$ in
$\er^d$. It is easy to see, that the nerve of the family 
$$
\set{K_1 \times [0,1], \dots, K_t \times [0,1], K(a), K(b)}
$$
of convex sets in $\er^d$ is isomorphic to $\susp \KK$.

For the reverse implication, we suppose that $\susp \KK$ is
$d$-representable and we show that $\KK$ is $(d-1)$-representable. Suppose that
$K(a), K(b), K_1 \dots, K_t$ is a $d$-representation of $\susp \KK$ ($K(a)$
corresponds to $a$ and $K(b)$ corresponds to $b$). We have that $\set{a,b} \not
\in \susp \KK$, thus there is a hyperplane $H \subseteq \er^d$ separating
$K(a)$ and $K(b)$ (we can assume that the sets in the representation are
compact). Then the nerve of the family
$$
\set{K_1 \cap H, \dots, K_t \cap H}
$$
of convex sets in $H \simeq \er^{d-1}$ is isomorphic to $\KK$.
\end{proof}

Since $2$-REPRESENTABILITY is NP-hard, we have the following corollary of
Lemma~\ref{LemSusp} (considering complexes that are obtained as $(d-2)$-tuple
suspensions):

\begin{theorem}
$d$-REPRESENTABILITY is NP-hard for $d \geq 2$.
\end{theorem}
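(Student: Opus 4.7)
The plan is to derive the theorem as a direct corollary of the NP-hardness of 2-REPRESENTABILITY (established earlier in this section via the Kratochv\'\i l--Matou\v sek reduction) together with Lemma~\ref{LemSusp}, by iterating the suspension operation.

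Concretely, I would fix $d \geq 2$ and describe the following polynomial-time reduction from 2-REPRESENTABILITY to $d$-REPRESENTABILITY. Given a simplicial complex $\KK$, define $\susp^0 \KK = \KK$ and $\susp^k \KK = \susp(\susp^{k-1}\KK)$ for $k \geq 1$, and output the complex $\KK^\ast = \susp^{d-2}\KK$. Since $d$ is a fixed constant, each suspension at most triples the number of faces and adds two vertices, so $\KK^\ast$ has size polynomial in the size of $\KK$ and can be constructed in polynomial time.

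By a straightforward induction on $k$ using Lemma~\ref{LemSusp}, a complex $\KK$ is $2$-representable if and only if $\susp^k \KK$ is $(k+2)$-representable; in particular, $\KK$ is $2$-representable if and only if $\KK^\ast$ is $d$-representable. Composing this equivalence with the fact that $2$-REPRESENTABILITY is NP-hard (shown above), the reduction transfers NP-hardness to $d$-REPRESENTABILITY, proving the theorem.

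There is no real obstacle: all the heavy lifting (the reduction from P3C3SAT to $2$-REPRESENTABILITY and both directions of the suspension equivalence) has already been carried out in the preceding paragraphs and in Lemma~\ref{LemSusp}. The only small thing to verify is the inductive step and the polynomial bound on $|\susp^{d-2}\KK|$, both of which are routine because $d$ is fixed; I would mention these explicitly but not dwell on them.
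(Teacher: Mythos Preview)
Your proposal is correct and matches the paper's approach exactly: the paper also deduces the theorem as an immediate corollary of Lemma~\ref{LemSusp} by passing to the $(d-2)$-fold suspension and invoking the already-established NP-hardness of $2$-REPRESENTABILITY. You simply spell out the polynomial-size bound and the inductive use of the lemma more explicitly than the paper does.
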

\proofend

\section{Acknowledgement}
I would like to thank to Ji\v{r}\'{\i}  Matou\v{s}ek for discussing the
contents and reading the preliminary version of this paper. I would also like
to thank the anonymous referees for many improving comments.

\printnomenclature[2cm]

\bibliographystyle{alpha}
\bibliography{complexdcoll}

\begin{thebibliography}{AKMM02}

\bibitem[AK92]{alon-kleitman92}
{N}. Alon and {D}. Kleitman.
\newblock {P}iercing convex sets and the {H}adwiger {D}ebrunner $(p,
  q)$-problem.
\newblock {\em {A}dv. {M}ath.}, 96(1):103--112, 1992.

\bibitem[AKMM02]{alon-kalai-matousek-meshulam02}
N.~Alon, G.~Kalai, J.~Matou\v{s}ek, and R.~Meshulam.
\newblock {T}ransversal numbers for hypergraphs arising in geometry.
\newblock {\em {A}dv. in {A}ppl. {M}ath.}, 130:2509--2514, 2002.

\bibitem[Ame96]{amenta96}
{N}. Amenta.
\newblock {A} short proof of an interesting {H}elly-type theorem.
\newblock {\em {D}iscrete {C}omput. {G}eom}, 15:423--427, 1996.

\bibitem[AMS93]{andersen-marjanovic-schori93}
R.~N. Andersen, M.~M. Marjanovi{\'c}, and R.~M. Schori.
\newblock Symmetric products and higher-dimensional dunce hats.
\newblock {\em Topology Proc.}, 18:7--17, 1993.

\bibitem[{B}{\'{a}}r82]{barany82}
{I}. {B}{\'{a}}r\'{a}ny.
\newblock {A} generalization of {C}arath\'eodory's theorem.
\newblock {\em {D}iscrete {M}ath.}, 40:141--152, 1982.

\bibitem[Bj{\"o}95]{bjorner95}
A.~Bj{\"o}rner.
\newblock Topological methods.
\newblock In {\em Handbook of combinatorics, {V}ol.\ 1,\ 2}, pages 1819--1872.
  Elsevier, Amsterdam, 1995.

\bibitem[Bor48]{borsuk48}
Karol Borsuk.
\newblock On the imbedding of systems of compacta in simplicial complexes.
\newblock {\em Fund. Math.}, 35:217--234, 1948.

\bibitem[{C}oo71]{cook71}
{S}.~{A}. {C}ook.
\newblock The complexity of theorem proving procedures.
\newblock In {\em Proc. 3rd Ann. ACM Symp. on Theory of Computing}, pages
  151--158, 1971.

\bibitem[{H}at01]{hatcher01}
{A}. {H}atcher.
\newblock {\em {A}lgebraic Topology}.
\newblock {C}ambridge {U}niversity {P}ress, {C}ambridge, 2001.

\bibitem[{H}el23]{helly23}
{E}. {H}elly.
\newblock {\"{U}}ber mengen konvexer {K}{\"{o}}rper mit gemeinschaftlichen
  {P}unkten.
\newblock {\em {J}ahresber. {D}eustch. {M}ath.-{V}erein.}, 32:175--176, 1923.

\bibitem[Hel30]{helly30}
E.~Helly.
\newblock {\"U}ber {S}ysteme von abgeschlossenen {M}engen mit
  gemeinschaftlichen {P}unkten.
\newblock {\em {M}onaths. {M}ath. und {P}hysik}, 37:281--302, 1930.

\bibitem[KGT01]{kleitman-gyarfas-toth01}
D.~J. Kleitman, A.~Gy{\'a}rf{\'a}s, and G.~T{\'o}th.
\newblock Convex sets in the plane with three of every four meeting.
\newblock {\em Combinatorica}, 21(2):221--232, 2001.
\newblock Paul Erd{\H{o}}s and his mathematics (Budapest, 1999).

\bibitem[KL79]{katchalski-liu79}
{M}. {K}atchalski and {A}. {L}iu.
\newblock {A} problem of geometry in ${R}^n$.
\newblock {\em Proc. Amer. Math. Soc.}, 75:284--288, 1979.

\bibitem[KM89]{kratochvil-matousek89}
J.~Kratochv{\'\i}l and J.~Matou\v{s}ek.
\newblock {NP}-hardness results for intersection graphs.
\newblock {\em Coment. Math. Univ. Carolin.}, 30:761--773, 1989.

\bibitem[KM94]{kratochvil-matousek94}
J.~Kratochv{\'\i}l and J.~Matou\v{s}ek.
\newblock Intersection graphs of segments.
\newblock {\em {J}. {C}omb. {T}heory {S}er. B}, 62(2):289--315, 1994.

\bibitem[KM05]{kalai-meshulam05}
G.~Kalai and R.~Meshulam.
\newblock {A} topological colorful {H}elly theorem.
\newblock {\em {A}dv. {M}ath.}, 191(2):305--311, 2005.

\bibitem[KM08]{kalai-meshulam08}
G.~Kalai and R.~Meshulam.
\newblock {L}eray numbers of projections and a topological {H}elly type
  theorem.
\newblock {\em {J}. {T}opology}, 1(3):551--556, 2008.

\bibitem[Kra91]{kratochvil91}
J.~Kratochv{\'\i}l.
\newblock String graphs {II}. {R}ecognizing string graphs is {NP}-hard.
\newblock {\em {J}. {C}omb. {T}heory {S}er. B}, 52:67--78, 1991.

\bibitem[Kra94]{kratochvil94}
J.~Kratochv{\'\i}l.
\newblock A special planar satisfiability problem and a consequence of its
  {NP}-completeness.
\newblock {\em {D}iscrete {A}ppl. {M}ath.}, 50(3):297--302, 1994.

\bibitem[LB62]{lekkerkerker62}
C.~G. Lekkerkerker and J.~C. Boland.
\newblock Representation of a finite graph by a set of intervals on the real
  line.
\newblock {\em Fund. Math.}, 51:45--64, 1962.

\bibitem[{L}ov74]{lovasz74}
{L}. {L}ov\'asz.
\newblock {P}roblem 206.
\newblock {\em {M}atematikai {L}apok}, 25:181, 1974.

\bibitem[Mat03]{matousek03}
J.~Matou\v{s}ek.
\newblock {\em Using the {B}orsuk-{U}lam Theorem}.
\newblock {S}pringer, {B}erlin etc., 2003.

\bibitem[Mat09]{matousek09}
J.~Matou{\v{s}}ek.
\newblock Removing degeneracy in {LP}-type problems revisisted.
\newblock {\em Discrete Comput. Geom.}, 42(4):517--526, 2009.

\bibitem[MF08]{malgouyres-frances08}
R.~Malgouyres and A.~R. Franc{\'e}s.
\newblock Determining whether a simplicial 3-complex collapses to a 1-complex
  is {NP}-complete.
\newblock {\em DGCI}, pages 177--188, 2008.

\bibitem[MT09]{matousek-tancer09}
J.~Matou\v{s}ek and M.~Tancer.
\newblock Dimension gaps between representability and collapsibility.
\newblock {\em Discrete Comput. Geom.}, 42(4):631--639, 2009.

\bibitem[{M}un84]{munkres84}
{J}.~{R}. {M}unkres.
\newblock {\em {E}lements of Algebraic Topology}.
\newblock {A}ddison - {W}esley, 1984.

\bibitem[Tan10a]{tancer10prep}
M.~Tancer.
\newblock A counterexample to {W}egner's conjecture on good covers, 2010.
\newblock Manuscript in preparation.

\bibitem[{T}an10b]{tancer10}
{M}. {T}ancer.
\newblock {N}on-representability of finite projective planes by convex sets.
\newblock {\em {P}roc. {A}mer. {M}ath. {S}oc.}, 138(9):3285--3291, 2010.

\bibitem[Weg75]{wegner75}
G.~Wegner.
\newblock $d$-collapsing and nerves of families of convex sets.
\newblock {\em Arch. Math.}, 26:317--321, 1975.

\end{thebibliography}


\end{document}